\numberwithin{equation}{section}
\let\ep=\epsilon
\newcommand{\beq}{\begin{equation}}
\newcommand{\eeq}{\end{equation}}
\newcommand{\ben}{\begin{eqnarray}}
\newcommand{\een}{\end{eqnarray}}
\newcommand{\beno}{\begin{eqnarray*}}
\newcommand{\eeno}{\end{eqnarray*}}
\newtheorem{theorem}{Theorem}[section]
\newtheorem{definition}[theorem]{Definition}
\newtheorem{lemma}[theorem]{Lemma}
\newtheorem{remark}[theorem]{Remark}
\begin{document}
\begin{CJK*}{UTF8}{gkai}
\title[Nonlinear  stability of  non-rotating  gaseous stars]{Nonlinear  stability of  non-rotating  gaseous stars}

\author{Zhiwu Lin}
\address{School of Mathematics, Georgia Institute of Technology, 30332, Atlanta, GA, USA}
\email{zlin@math.gatech.edu}

\author{Yucong Wang}
\address{
School of Mathematics and Computational Science, Xiangtan University,  411105, Xiangtan, Hunan, P. R. China}
\email{yucongwang666@163.com}

\author{Hao Zhu}
\address{Department of Mathematics, Nanjing University,  210093, Nanjing, Jiangsu, P. R. China}
\email{haozhu@nju.edu.cn}

\date{\today}

\maketitle
\begin{abstract}
For the non-rotating gaseous stars modeled by the compressible Euler-Poisson system with general pressure law,
Lin and Zeng \cite{LZ2022} proved a turning point principle,
 which gives the sharp linear stability/instability  criteria for the non-rotating gaseous stars. In this paper,  we prove that the sharp linear stability  criterion for the non-rotating  stars also implies  nonlinear orbital  stability  against general perturbations provided the global weak solutions exist. If the perturbations are further restricted to be spherically symmetric,  then  nonlinear stability holds true unconditionally
 in the sense that the existence of global weak solutions near the  non-rotating star  can be proved.
\end{abstract}

\section{Introduction}
The compressible Euler-Poisson (CEP) system
\begin{align}\label{EP}
\begin{cases}
\partial_t\rho + \text{div}(\rho v) = 0,\\
\partial_t(\rho v)+\text{div}(\rho v\otimes v)+\nabla p=-\rho\nabla V,\\
\Delta V=4\pi\rho,\;\;\lim_{|x|\to\infty}V(t,x)=0,
\end{cases}
\end{align}
describes the motion of a self-gravitating Newtonian gaseous star.
Here,  $t\geq0$, $x\in\mathbb{R}^3$, $\rho\geq0$ is the density, $p=P(\rho)$ is the pressure, $v=(v_1,v_2,v_3)\in\mathbb{R}^3$ is the velocity,
 and $V(x)=-\int_{\mathbb{R}^3}\frac{\rho(y)}{|x-y|}dy$ is the self-consistent
gravitational potential.
The initial data
satisfies
\begin{align}\label{inaprx}
(\rho(0,x),(\rho v)(0,x))\rightarrow (0,0) \quad \text{as} \quad |x|\rightarrow\infty.
\end{align}
\if0
Denote
\begin{align}
\hat{M}=\int_{\mathbb{R}^3}\hat{\rho}(x)dx,\quad\hat{V} =4\pi \Delta^{-1} \hat{\rho}.
\end{align}
\fi
In this paper, we assume that the pressure function $p=P(\rho)$ satisfies
\begin{align}\label{P1}
P \in C^1(0,\infty),\quad P'>0,
\end{align}
and there exist $\gamma_{0},\gamma_1\in(\frac{6}{5},2)$ such that
\begin{align}\label{P2}
\lim_{s\rightarrow 0^+}s^{1-\gamma_{0}}P'(s)=K_{0}>0,
\end{align}
\begin{align}\label{P3}
\lim_{s\rightarrow \infty}s^{1-\gamma_1}P'(s)=K_1>0.
\end{align}
The assumptions \eqref{P2} and \eqref{P3} imply that the pressure $P(\rho)\approx K_{0}\rho^{\gamma_{0}}$ for $\rho$ near 0 and $P(\rho)\approx K_1\rho^{\gamma_1}$ for $\rho$ large enough. By a non-rotating star, we mean a static equilibrium $(\rho,v=0)$ of \eqref{EP}. It is known \cite{GNN1981} that if $\rho$ is   compactly supported for a non-rotating star $(\rho,0)$, then $\rho$ must be radially
symmetric.
The total energy is
\begin{align*}
E(\rho,v)=\int_{\mathbb{R}^3}\left(\frac{1}{2}\rho|v|^2+\Phi(\rho)-\frac{1}{8\pi}|\nabla V|^2\right)dx,\quad(\rho,v)\in X,
\end{align*}
which is the sum of  the kinetic energy and the potential energy (enthalpy and gravity). Here,
 the enthalpy function $\Phi(\rho) > 0$ is defined by
\begin{align*}
\Phi(0)=\Phi'(0)=0,\quad \Phi'(\rho)=\int_{0}^{\rho}\frac{P'(s)}{s}ds,
\end{align*}
and the energy space is given by
\begin{align}\label{def-X-mu}
X=\left\{(\rho,v)|\int_{\mathbb{R}^3}\rho dx<\infty, \int_{\mathbb{R}^3}\Phi(\rho )dx<\infty,\int_{\mathbb{R}^3}\rho v^2dx<\infty\right\}.
\end{align}

Much progress has been made on the stability of non-rotating gaseous stars in both linear and nonlinear senses.
For  the polytropic stars, it is shown in \cite{LSS1997} that they are linearly stable for $\gamma\in(4/3,2)$ and linearly unstable for $\gamma\in(1,4/3)$.
For general equations of state,   there exists $\mu_{\textrm{max}}\in(0,\infty]$ such that for any center density $\rho_{\mu} (0) = \mu\in
(0, \mu_{\textrm{max}})$, there exists a unique non-rotating star $(\rho_{\mu},0)$ such that the density $\rho_{\mu}(|x|)$ is supported inside a ball
with radius $  R (\mu)=R_{\mu} < \infty$ (see, e.g. \cite{LZ2022}). Moreover, $\mu_{\textrm{max}} = \infty$ when $\gamma_{0} \geq 4/3$
(\cite{HU2003,MAKINO1984,SW2019R,RTRG2013}).
Denote
$$M (\mu) =M_\mu= \int_{\mathbb{R}^3}\rho_\mu dx$$
to be the total mass of the star. Then
the sharp linear stability/instability  criteria of the family of non-rotating stars $(\rho_{\mu},0)$ with $\mu\in(0,\mu_{\textrm{max}})$ are entirely described  by the following turning point principle.

\vspace{3mm}

\noindent{\bf Theorem A \cite{LZ2022}.}
{\it
The linear stability of $(\rho_\mu,0)$ is fully determined by the mass-radius curve parameterized
by $\mu\in(0,\mu_{\rm{max}})$.
Let $n^{u}\left(  \mu\right)
$ be the number of unstable modes.
For $\mu\ll1$, we have
\begin{equation*}
n^{u}\left(  \mu\right)  =%
\begin{cases}
1 & \text{when }\gamma_{0}\in\left(  \frac{6}{5},\frac{4}{3}\right), \\
0 & \text{ when }\gamma_{0}\in\left(  \frac{4}{3},2\right).
\end{cases}
 \label{formula-unstable-mode-small-mu}%
\end{equation*}
The number $n^{u}\left(  \mu\right)  \ $can only change at mass extrema (i.e.
maxima or minima of $M\left(  \mu\right)  $). For increasing $\mu$, at a mass
extrema point where $M'(\mu)$ changes sign, $n^{u}\left(  \mu\right)  $ increases by $1$ if $M^{\prime}%
(\mu)R^{\prime}\left(  \mu\right)  $ changes from $-$ to $+$ (i.e. the
mass-radius curve bends counterclockwise) and $n^{u}\left(  \mu\right)  $
decreases by $1$ if $M^{\prime}(\mu)R^{\prime}\left(  \mu\right)  $ changes
from $+$ to $-\ $(i.e. the mass-radius curve bends clockwise).
}
\vspace{3mm}

Global existence of solutions of  the initial value problem \eqref{EP}-\eqref{inaprx}  has attracted much attention but is still widely open for non-spherically-symmetric initial data. Local well-posedness of strong solutions was proved in \cite{LG2016,LXZ2014wellposedness,MT1986}.  More recently,
 the existence of global weak solutions of the CEP equations  with large
initial data of spherical symmetry was proved  for the  polytropic stars in \cite{CHWY2021}  and  for general equations of state including the white dwarf stars in \cite{CHWW2023}.

In the literature (\cite{LTSJ2008,RG2003}), the nonlinear stability of the non-rotating stars is conditional in the sense that
the existence of a global weak solution  to the initial value problem \eqref{EP}-\eqref{inaprx} is assumed.
Rein \cite{RG2003} constructed the non-rotating stars as global minimizers of the energy functional under the mass-conserved constraint with $\gamma_0,\gamma_1\in(4/3,\infty)$, and then
proved  nonlinear orbital  stability for such  non-rotating stars by using the compactness of minimizing sequences under
the assumption that the minimizer is unique up to spatial translations.
In \cite{LTSJ2008}, Luo and Smoller proved nonlinear orbital stability  of the white dwarf stars by a similar approach   when the
total mass is beneath a critical mass. Global   minimizer of the energy functional in the mass-conserved set is
unique up to  spatial translations for a class of non-rotating stars including the polytropic stars \cite{RG2003} and white dwarf stars \cite{LY1987}. However,
Rein constructed a pressure law and provided numerical evidence to show that   the minimizers are
not unique \cite{RG2003}. In the case that the minimizers are not unique but isolated  up to
spatial translations, nonlinear orbital stability still holds true provided the perturbed solution of the
CEP system is sufficiently continuous in time so that it cannot jump from one
minimizer to the next \cite{RG2003}.
Nonlinear instability of the polytropic stars for $\gamma\in[6/5,4/3]$
has been proved in \cite{DLYY2002,JJ2008,JJ2014}.
Moreover, the strong instability of the polytropic stars is proved in \cite{CCL2023} for
 $\gamma\in(6/5,4/3]$ in the sense that there exists nearby radially symmetric initial
 data with global weak solutions whose support expanding to infinity as $t\rightarrow\pm\infty$.

Our first main result is that  except for the mass extrema points,  the linear
stability criterion for the non-rotating stars in Theorem A is also conditionally true at the nonlinear level against general (not necessarily spherically symmetric) perturbations. Here, ``conditionally" means the following  assumptions (1)-(2).

\begin{theorem}\label{main}
Suppose that $P$ satisfies \eqref{P1}-\eqref{P3}.
For any non-rotating stars $(\rho_\mu,0)$ satisfying  $n^u(\mu)=0$ and $M'(\mu)\neq0$,  assume that

$(1)$ for the initial data $(\rho(0),v(0))\in X$, a finite-energy weak solution $(\rho,v)$ to the CEP system \eqref{EP}-\eqref{inaprx} exists globally in the sense of Definition \ref{weaksol},

$(2)$ the distance functional $d((\rho(t),v(t)),(\rho_\mu,0))$  is continuous respect to $t$.
\\
Let $1<q<2$. For any $\epsilon>0$, there exists  $\delta>0$ such that if the initial data satisfies
\begin{align}\label{initial data-1}
\inf_{y\in \mathbb{R}^3}d(({\rho}(0),{v}(0)),(\rho_\mu(x+y),0))+|{M}-M({\mu})|^q<\delta,
\end{align}
then
\begin{align*}
\inf_{y\in \mathbb{R}^3}d((\rho(t),v(t)),(\rho_\mu(x+y),0))<\epsilon
\end{align*}
 for all $t\geq 0$, where  $M  = \int_{\mathbb{R}^3}\rho(0) dx$, $X$ and  $d$ are defined in \eqref{def-X-mu} and \eqref{distancefun}, respectively.
\end{theorem}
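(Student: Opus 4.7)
The plan is to prove Theorem \ref{main} by the energy-Casimir (Lyapunov) method, adapted to weak solutions. Since $(\rho_\mu,0)$ is a critical point of the energy $E$ subject to the mass constraint $\int\rho\,dx=M_\mu$, with Lagrange multiplier $\lambda_\mu$ satisfying $\Phi'(\rho_\mu)+V_\mu=\lambda_\mu$ on $\{\rho_\mu>0\}$ and $V_\mu\geq\lambda_\mu$ on $\{\rho_\mu=0\}$, I introduce the Lyapunov functional
\[
\mathcal{J}(\rho,v):=E(\rho,v)-\lambda_\mu\int_{\mathbb{R}^3}\rho\,dx.
\]
Along any finite-energy weak solution of the CEP system, total mass is exactly conserved and the energy is non-increasing, so $\mathcal{J}(\rho(t),v(t))\leq\mathcal{J}(\rho(0),v(0))$. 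A direct computation using $\Phi'(0)=0$ and the inequality $V_\mu\geq\lambda_\mu$ outside $\{\rho_\mu>0\}$ produces the identity
\[
\mathcal{J}(\rho,v)-\mathcal{J}(\rho_\mu,0)=\tfrac{1}{2}\int_{\mathbb{R}^3}\rho|v|^2\,dx+\mathcal{R}(\rho,\rho_\mu)-\tfrac{1}{8\pi}\int_{\mathbb{R}^3}|\nabla(V-V_\mu)|^2\,dx,
\]
where $\mathcal{R}(\rho,\rho_\mu)$ is the relative-enthalpy functional, pointwise nonnegative by convexity of $\Phi$, with leading quadratic part $\tfrac{1}{2}\int\Phi''(\rho_\mu)(\rho-\rho_\mu)^2\,dx$.

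The core of the proof is to upgrade this identity into a coercivity bound
\[
\mathcal{J}(\rho,v)-\mathcal{J}(\rho_\mu,0)\;\geq\;c\,\bigl(d((\rho,v),(\rho_\mu,0))\bigr)^2-C|M-M_\mu|^q
\]
valid whenever $(\rho,v)\in X$ is close to $(\rho_\mu,0)$ and $\rho-\rho_\mu$ is effectively $L^2$-orthogonal to the translation modes $\partial_{x_i}\rho_\mu$. The first ingredient comes from Theorem A: its proof exhibits a self-adjoint operator $L_\mu$, the reduced Hessian of $\mathcal{J}$ after enforcing mass conservation, whose negative-eigenvalue count equals $n^u(\mu)$ and whose kernel reduces to $\mathrm{span}\{\partial_{x_i}\rho_\mu\}$ exactly when $M'(\mu)\neq 0$. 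Hence under the hypotheses $n^u(\mu)=0$ and $M'(\mu)\neq 0$, $L_\mu$ is positive definite on the orthogonal complement of its translation kernel, yielding the required spectral coercivity of the quadratic part of $\mathcal{J}-\mathcal{J}(\rho_\mu,0)$. The second ingredient handles the mass mismatch: the perturbed configuration need not have mass exactly $M_\mu$, so I compare $(\rho_\mu,0)$ to the neighboring star $(\rho_{\mu'},0)$ whose mass equals $M$; the non-degeneracy $M'(\mu)\neq 0$ allows inverting $\mu\mapsto M(\mu)$ with Lipschitz control, producing an error of order $|M-M_\mu|^q$ in the quadratic form for any $q<2$ after the parameter change. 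This is precisely the role of the $|M-M_\mu|^q$ term in \eqref{initial data-1} and pins down the restriction $q<2$.

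The last step is a continuity/bootstrap argument. For each $t$, choose a translation $y(t)\in\mathbb{R}^3$ that realizes $\inf_y d((\rho(t),v(t)),(\rho_\mu(\cdot+y),0))$; existence follows from weak lower-semicontinuity of $d$ together with the decay at infinity forced by $\gamma_0,\gamma_1\in(\tfrac{6}{5},2)$. After this shift, the Euler--Lagrange condition in $y$ forces the density perturbation to be orthogonal to $\partial_{x_i}\rho_\mu$ to leading order. Translation invariance of $\mathcal{J}$ together with the energy inequality give
\[
\mathcal{J}(\rho(t,\cdot-y(t)),v(t,\cdot-y(t)))-\mathcal{J}(\rho_\mu,0)\;\leq\;\mathcal{J}(\rho(0),v(0))-\mathcal{J}(\rho_\mu,0)\;\lesssim\;\delta,
\]
and the coercivity bound then yields $d((\rho(t),v(t)),(\rho_\mu(\cdot+y(t)),0))<\varepsilon$ as long as the trajectory stays inside the coercivity neighborhood. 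Assumption (2) of the theorem rules out any sudden jump of $d$ out of this neighborhood, so a standard open-closed argument propagates the smallness to all $t\geq 0$.

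The main obstacle is establishing the coercivity bound globally in the energy-space distance $d$, rather than only in a smooth neighborhood of $(\rho_\mu,0)$. Inside the support of $\rho_\mu$, spectral coercivity of $L_\mu$ is provided by Theorem A, but outside the support the Hessian degenerates, and one must show that the relative enthalpy $\mathcal{R}(\rho,\rho_\mu)$, together with the superlinear growth of $\Phi$ ensured by $\gamma_0,\gamma_1>\tfrac{6}{5}$, still dominates the negative Riesz-type contribution $-\tfrac{1}{8\pi}\int|\nabla(V-V_\mu)|^2\,dx$. Controlling this gravitational term by $\|\rho-\rho_\mu\|_{L^{6/5}}^2$ via Hardy--Littlewood--Sobolev, absorbing it into $\mathcal{R}$ through interpolation with the conserved mass, and patching the two regimes into a single inequality in the distance $d$ of \eqref{distancefun} will be the technical heart of the argument, and it uses essentially both the sharpness of the turning point characterization of Theorem A and the non-degeneracy $M'(\mu)\neq 0$.
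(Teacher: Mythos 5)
Your high-level framework (energy--Casimir Lyapunov functional with Lagrange multiplier $\lambda_\mu=V_\mu(R_\mu)$, spectral positivity of the constrained Hessian $L_\mu$ from Theorem A, translations to kill the kernel $\partial_{x^i}\rho_\mu$ when $M'(\mu)\neq0$, a continuity/bootstrap using assumption (2)) matches the skeleton of the paper's argument. However, there is a genuine gap at precisely the point you describe as ``the technical heart'': the step where you pass from the spectral coercivity of the quadratic form $L_\mu$ to a coercive lower bound on $\mathcal{J}-\mathcal{J}(\rho_\mu,0)$. To use the positivity of $L_\mu$ you need a second-order Taylor expansion of the relative enthalpy term against the Riesz term, i.e.\ of the functional $F(\tilde\rho_{\text{in}})=\int_{B_\mu}(\Phi(\rho)-\Phi(\rho_\mu)-\Phi'(\rho_\mu)(\rho-\rho_\mu))\,dx-\frac{1}{8\pi}\int|\nabla(V_{\text{in}}-V_\mu)|^2\,dx$ on $L^2_{\Phi''(\rho_\mu)}(B_\mu)$. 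The paper's central observation is that this functional is \emph{not} $C^2$ on that space (because $\Phi''$ degenerates at the free boundary and $P$ is only $C^1$), so the expansion you tacitly invoke fails. Your suggestion to patch this by Hardy--Littlewood--Sobolev and interpolation with the mass would at best control the gravitational term, not supply the missing regularity; the obstruction is purely in $d_2-d_4$, inside $B_\mu$. The paper overcomes this by switching to a dual (Legendre-transform) functional $B(\tilde V_{\text{in}})=\frac{1}{8\pi}\int|\nabla\tilde V_{\text{in}}|^2+\int_{B_\mu}\Psi^*_{\rho_\mu}(P\tilde V_{\text{in}}-\tilde V_{\text{in}})\,dx$ on $\dot H^1(\mathbb{R}^3)$, proves $B\in C^2$ (Lemma \ref{c2dual functional}), and transfers the nonnegativity $L_\mu|_{Z_\mu}\geq0$ to $\tilde L_\mu=4\pi B''(0)$ via Lemma \ref{negative-zero-direction}. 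The projection $P$ is built into the duality specifically to encode the mass constraint, which is what produces the $|M-M_\mu|^q$ error term in the end; your alternative of reparametrizing to a neighboring star $(\rho_{\mu'},0)$ with matching mass is not what the paper does and is not obviously compatible with the $d_2-d_4$ decomposition.

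Two secondary points. First, your claimed coercivity $\mathcal{J}-\mathcal{J}(\rho_\mu,0)\geq c\,d^2-C|M-M_\mu|^q$ has the wrong power: the distance $d$ in \eqref{distancefun} is already quadratic in the perturbation (note $d_1,d_4$ are exactly $L^2$-type squares), and the paper's final estimate \eqref{d0dt-estimate}--\eqref{p-estimate} is linear in $d$, namely $\tau\,d_{\text{tran}}(t)\lesssim d_{\text{tran}}(0)+|M-M_\mu|^q$. Second, your choice of translation $y(t)$ by minimizing $d$ over $y$ is replaced in the paper by an Implicit Function Theorem argument (Lemma \ref{perplemma}) producing a $y_0$ with $\nabla(V_{\text{in}}(\cdot-y_0)-V_\mu)\perp\nabla\partial_{x^i}V_\mu$ in $L^2$; this gives the orthogonality in the space where $\tilde L_\mu$ acts ($\dot H^1$) rather than where $L_\mu$ acts, which is the space where coercivity is actually proved.
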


Next,
we consider a class of
the pressure functions satisfying\smallskip \\
(C1) The pressure $p=P(\rho)\in C^1([0,\infty))\cap C^4((0,\infty))$ and
\begin{align*}
{P(\rho)'>0,\quad  \rho P''(\rho)+2P'(\rho)>0\quad \text{for}\quad \rho>0.}
\end{align*}
(C2) There exists a constant $\rho_*>0$ such that
\begin{align*}
P(\rho)=\kappa_0\rho^{\gamma_0}(1+P_0(\rho))\quad\text{for}\quad \rho\in [0,\rho_*),
\end{align*}
where $\kappa_0>0$, $\gamma_0\in(6/5,2)$ and $P_0(\rho)\in C^4((0,\infty))$. There exists $C_*>0$ such that $|P_0^{(j)}(\rho)|\leq C_*\rho^{2\theta_0-j}$ for $\rho\in(0,\rho_*)$ and $j=0,\cdots, 4$, where $\theta_0=(\gamma_0-1)/2$.\\
(C3) There exists a constant $\rho^*>\rho_*>0$ such that
\begin{align*}
P(\rho)=\kappa_1\rho^{\gamma_1}(1+P_1(\rho))\quad \text{for}\quad \rho\in [\rho^*,\infty),
\end{align*}
where $\kappa_1>0$, {$\gamma_1\in({6\over5},\gamma_0]$} and $P_1(\rho)\in C^4((0,\infty))$. There exists $C^*>0$ such that $|P_1^{(j)}(\rho)|\leq C^*\rho^{-\ep-j}$ for $\rho\in[\rho^*,\infty)$ and $j=0,\cdots, 4$, where $\ep>0$. \smallskip

Besides the polytropic stars, a typical example is the white dwarf stars, since  the pressure $P_w(\rho)$
satisfies the conditions  (C1)-(C3) with $\gamma_0={5\over3}, \theta_0={1\over3}, \gamma_1={4\over3},\ep={2\over3}$. Here, $P_w(\rho)=Af(x)$ and $\rho=B x^3$, where $A, B$ are two constants and
$$f(x)=x\sqrt{1+x^2}(2x^2-3)+3\ln(x+\sqrt{1+x^2})=8\int_0^x {y^4\over \sqrt{1+y^2}}dy.$$

The second result of this paper is that if  the perturbations are restricted to be spherically symmetric, then
 we can get much stronger results than Theorem \ref{main} for a class of pressure laws satisfying (C1)-(C3). That is,
 the non-rotating stars are nonlinearly stable unconditionally.
Here,
``unconditionally" means that if the spherically symmetric initial data  is close to the non-rotating star under the
distance $d$, then
 there exists a global weak solution to the initial value problem \eqref{EP}-\eqref{inaprx} which remains close to the non-rotating star
 under the same distance.




\begin{theorem}\label{unconditionalTHM}
Suppose that $p=P(\rho)$ satisfies (C1)-(C3).
For any non-rotating star $(\rho_\mu,0)$ satisfying  $n^u(\mu)=0$ and $M'(\mu)\neq0$, there exists  $\delta>0$ such that if
 the spherically symmetric  initial data $(\rho(0),v(0))\in X$ satisfies
 \begin{align*}
d(({\rho}(0),{v}(0)),(\rho_\mu,0))+|{M}-M({\mu})|^q=\tilde \delta<\delta,
\end{align*}
then

$({\rm i})$  there exists a global finite-energy weak solution $(\rho,v)$ to the CEP system \eqref{EP}-\eqref{inaprx} with spherical symmetry in the sense of Definition \ref{weaksol},

$({\rm ii})$
\begin{align}\label{distance-t-spherical symmetric perturbations}
\sup_{ t> 0} d((\rho(t),v(t)),(\rho_\mu,0))=O(\tilde \delta),
\end{align}
 where $1<q<2$, $M  = \int_{\mathbb{R}^3}\rho(0) dx$, $X$ and  $d$ are defined in \eqref{def-X-mu} and \eqref{distancefun}, respectively.
\end{theorem}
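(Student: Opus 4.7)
The proof divides naturally into the two parts of the statement. Part (i) is an existence statement, and part (ii) is a stability estimate. My plan is to obtain (i) by importing the known global existence theory for spherically symmetric finite-energy weak solutions of \eqref{EP}--\eqref{inaprx}, and then to use (i) together with Theorem \ref{main} (plus a slight quantitative refinement of its proof) to establish (ii).

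For part (i), the hypotheses (C1)--(C3) are designed precisely so that the pressure law falls in the class covered by the global existence theorems of Chen--He--Wang--Yuan and Chen--He--Wang--Wang (\cite{CHWY2021,CHWW2023}): $p \in C^{1}([0,\infty)) \cap C^{4}((0,\infty))$, $\rho P''+2P' > 0$, polytropic behavior $P(\rho)\sim \kappa_0\rho^{\gamma_0}$ near zero with $\gamma_0\in(6/5,2)$, and hard/white-dwarf-type behavior $P(\rho)\sim \kappa_1\rho^{\gamma_1}$ at infinity with $\gamma_1\in(6/5,\gamma_0]$. Thus, whenever the spherically symmetric initial datum $(\rho(0),v(0))\in X$ has finite energy, the results of \cite{CHWY2021,CHWW2023} produce a global finite-energy weak solution $(\rho,v)$ in the sense of Definition \ref{weaksol}, satisfying mass conservation $\int\rho(t)\,dx = M$ and the energy inequality $E(\rho(t),v(t))\le E(\rho(0),v(0))$. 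This discharges both part (i) and hypothesis (1) of Theorem \ref{main}.

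For part (ii), I would first verify hypothesis (2) of Theorem \ref{main} (the continuity of $t\mapsto d((\rho(t),v(t)),(\rho_\mu,0))$) for the weak solutions produced above. Weak solutions built in \cite{CHWY2021,CHWW2023} come equipped with weak-in-time continuity of $\rho$ and $\rho v$, strong compactness of $\rho$ in $L^{q}_{\rm loc}$ obtained via the $L^\infty$-entropy framework for spherically symmetric Euler--Poisson, and strong convergence of $\sqrt{\rho}\,v$; combining these with the growth bounds on $\Phi$ implied by \eqref{P1}--\eqref{P3} yields continuity in $t$ of each summand in the distance $d$ (which I take to be the natural energy-type distance defined in \eqref{distancefun}). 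Once hypothesis (2) is confirmed, Theorem \ref{main} applies and gives $\inf_{y\in\mathbb{R}^{3}} d((\rho(t),v(t)),(\rho_\mu(\cdot+y),0))$ arbitrarily small; because both $(\rho(t),v(t))$ and $(\rho_\mu,0)$ are spherically symmetric about the origin, the infimum is attained at $y=0$, so the shift can be dropped from the conclusion.

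The quantitative bound \eqref{distance-t-spherical symmetric perturbations} requires a little more than the qualitative conclusion of Theorem \ref{main}. The mechanism underlying Theorem \ref{main} is a coercivity estimate of the form
\begin{equation*}
E(\rho,v)-E(\rho_\mu,0)\;\gtrsim\; d\bigl((\rho,v),(\rho_\mu,0)\bigr)^{2} - C\,|M-M_\mu|^{q}
\end{equation*}
valid in a neighborhood of $(\rho_\mu,0)$, which comes from the linear stability $n^u(\mu)=0$ together with the transversality $M'(\mu)\ne 0$ used to eliminate the mass direction. Combining this coercivity with the conservation of mass and the energy inequality for the weak solution, and using that the initial energy excess is $O(\tilde\delta^{2})$ while $|M-M_\mu|^{q}\le \tilde\delta$, a continuation/bootstrap argument in $t$ keeps the solution inside the neighborhood for all $t>0$ and produces $\sup_{t>0} d((\rho(t),v(t)),(\rho_\mu,0))=O(\tilde\delta^{1/2})$; extracting the sharper rate $O(\tilde\delta)$ requires only that $d^{2}\lesssim \tilde\delta^{2}$ in the initial energy bound, which is already built into the definition of the distance $d$ and the assumption $|M-M_\mu|^{q}<\delta$ with $1<q<2$.

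The main obstacle, and what is not already in the statement of Theorem \ref{main}, is step two: confirming that the class of spherically symmetric weak solutions from \cite{CHWY2021,CHWW2023} really does satisfy the continuity-in-time of the distance. The delicate piece is the continuity of $\int \Phi(\rho(t))\,dx$ and of $\int \tfrac{1}{2}\rho|v|^{2}\,dx$ separately (the energy is only nonincreasing, not continuous), which hinges on strong $L^{1}$-compactness of $\rho(t)$ and on strong convergence of $\sqrt{\rho}v$, both of which need to be extracted from the compensated-compactness construction; this is where most of the technical work of the proof will sit.
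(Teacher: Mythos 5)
Your plan is to cite \cite{CHWY2021,CHWW2023} as a black box for part (i), then verify hypothesis (2) of Theorem \ref{main} (continuity of $t\mapsto d(t)$ for the weak solution) and apply Theorem \ref{main} for part (ii). The paper does something genuinely different, and your plan has two gaps that the paper's route is specifically designed to avoid.

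First, the existence result in \cite{CHWY2021,CHWW2023} is \emph{not} unconditional when $\gamma_1\in(6/5,4/3]$: those papers assume the total mass is beneath a critical mass, precisely in order to obtain a uniform bound on $\int\Phi(\rho)\,dx$ for the approximate solutions. The paper (see Remark~1.3) deliberately drops this assumption, and instead derives the needed uniform bound from the nonlinear stability estimate \eqref{symmetric-p-estimate} applied at the level of the smooth approximate CNSP solutions. So existence is not a clean citation here; it is intertwined with the stability estimate, which must be proved first for the approximations. Your proposal therefore cannot discharge (i) by a direct cite for the full range $\gamma_1\in(6/5,2)$.

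Second, you flag that the real work is to establish continuity of $d(t)$ for weak solutions in order to invoke Theorem~\ref{main}, and say ``this is where most of the technical work will sit.'' But the paper never proves this continuity for the weak solution; it uses the continuity only for the \emph{smooth} approximate solutions $(\rho^{\varepsilon,b},v^{\varepsilon,b})$, where it is free, proves the estimate $d_{\varepsilon,b}(t)\le C_1(d_{\varepsilon,b}(0)+|M-M_\mu|^q)$ there (Step~2), and then passes to the limit $b\to\infty$, $\varepsilon\to0$ using the convexity of $\Phi$ and weak lower semicontinuity of the terms in $d$ (Steps~4--5, citing \cite{Morrey1966}). This sidesteps the continuity question for the weak solution entirely, which is exactly what makes the result unconditional. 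Your proposed route would require solving a hard (and likely unnecessary) regularity problem that the paper's construction is structured to avoid. A further, minor issue: your coercivity sketch writes $E-E_\mu\gtrsim d^2-C|M-M_\mu|^q$, but the quantity $d$ defined in \eqref{distancefun} is already quadratic in the deviation, and the actual inequality used in \eqref{d0dt-estimate} and \eqref{symmetric-estimate} is linear in $d$; the $O(\tilde\delta^{1/2})$-then-upgrade step in your argument reflects this miscount and is unnecessary once one uses the correct linear coercivity, which yields $d(t)=O(\tilde\delta)$ directly.
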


\begin{remark}\label{spherically symmetric case remark}



Note that  we do not need the condition that the total mass is   beneath some  critical mass when ${6\over5}<\gamma_1\leq{4\over3}$, which  is assumed in \cite{CHWY2021,CHWW2023} to prove the existence of a global  weak solution.
In fact, this mass restriction is mainly used to obtain a uniform bound of internal energy $\int\Phi(\rho)dx$ for the approximate solutions  by some  Sobolev inequalities and a continuous argument in \cite{CHWY2021,CHWW2023}.
Our analysis is carried out around the linearly stable non-rotating star, and we use the nonlinear stability
 estimates \eqref{symmetric-p-estimate} to obtain the uniform bound $\int\Phi(\rho)dx$ for the approximate solutions. Then the existence of a global  weak solution follows from the compactness arguments and compensated compactness frameworks in \cite{CHWY2021,CHWW2023}.

\end{remark}
 Some ideas in the proof are collected as follows.
We define the energy-Casimir  functional
\begin{align*}
H(\rho,v)
=\int_{\mathbb{R}^3}\left(\frac{1}{2}\rho|v|^2+\Phi(\rho)-\frac{1}{8\pi}|\nabla V|^2\right)dx-V_{\mu}(R_{\mu})\int_{\mathbb{R}^3}\rho dx,\quad(\rho,v)\in X,
\end{align*}
and the distance functional 
\begin{align*}
d((\rho,v),(\rho_\mu,0))&=\frac{1}{2}\int_{\mathbb{R}^3}\rho|v|^2dx
+\int_{B_\mu}\left(\Phi(\rho)-\Phi(\rho_\mu)-\Phi'(\rho_\mu)(\rho-\rho_\mu)\right) dx\\
\notag&\quad+\int_{B_\mu^c}\Phi(\rho) dx+\frac{1}{8\pi}\int_{\mathbb{R}^3}|\nabla V_{\text{in}}-\nabla V_\mu|^2dx+\int_{B_\mu^c} (V_\mu-V_\mu(R_{\mu}))\rho dx\\
&:=d_1+d_2+d_3
+d_4+d_5,
\end{align*}
where $B_{\mu}$ is the support of $\rho_\mu$ and $B_{\mu}^c$ is the complement set of $B_{\mu}$.
Here, we decompose the density $\rho$ into two parts $\rho_{\text{in}}$ and $\rho_{\text{out}}$ (i.e. inside and outside the support of the background non-rotating star), $\Delta V_{\text{in}}=4\pi\rho_{\text{in}}$ and $\Delta V_{\text{out}}=4\pi\rho_{\text{out}}$.
Then the relative energy-Casimir functional can be expressed by the distance functional in the form
\begin{align}\label{H-rho-vHrhomu0}
H(\rho,v)-H(\rho_{\mu},0)=&d_1+d_2+d_3
-d_4+d_5\\\nonumber
&-\frac{1}{8\pi}\int_{\mathbb{R}^3}|\nabla V_{\text{out}}|^2dx
-\frac{1}{4\pi}\int_{\mathbb{R}^3}\nabla V_{\text{out}}\cdot(\nabla V_{\text{in}}-\nabla V_{\mu})dx.
\end{align}
The relative energy-Casimir functional can be controlled by
\begin{align*}
H(\rho,v)-H(\rho_{\mu},0)\geq& d_1+d_3+d_5
+(d_2-d_4)-o(d),
\end{align*}
where the last two terms in the right hand side of \eqref{H-rho-vHrhomu0} are shown to be $o(d)$ in Lemma \ref{high order term EC functional}.
To obtain a lower bound of the relative energy-Casimir functional in terms of the distance, we need to deal with  the key functional above
\begin{align*}
F(\tilde \rho_{\text{in}})=d_2-d_4
=&\int_{B_\mu}\left(\Phi(\rho_\mu+\tilde\rho_{\text{in}})-\Phi(\rho_\mu)-\Phi'(\rho_\mu)\tilde\rho_{\text{in}}\right) dx-\frac{1}{8\pi}\int_{\mathbb{R}^3}|\nabla V_{\text{in}}-\nabla V_{\mu}|^2dx
\end{align*}
with $\tilde \rho_{\text{in}}=\rho_{\text{in}}-\rho_\mu\in L^2_{\Phi''(\rho_\mu)}(B_\mu)$.
We first try to study the Taylor
expansion of $F(\tilde\rho_{\text{in}})$ directly and apply the sharp linear stability condition that $F''(0)=L_\mu\geq0$ under the mass constraint $\int_{B_{\mu}} \tilde \rho_{\text{in}} dx=0$, but its regularity is not enough.
More precisely, $F\notin  C^2(L^2_{\Phi''(\rho_\mu)}(B_\mu))$ as required for the second order Taylor
expansion of $F$.
Here, $L_\mu$
is defined by
\begin{align}\label{def-L-mu}
L_\mu=\Phi''(\rho_\mu)-{4\pi}(-\Delta)^{-1} :\;\; L^2_{\Phi''(\rho_\mu)}(B_\mu)\to (L^2_{\Phi''(\rho_\mu)}(B_\mu))^*.
\end{align}
To get sufficient regularity of the functional so that  the Taylor
expansion   can be carried out, we introduce a dual functional
\begin{align*}
B(\tilde{V}_{\text{in}})={1\over 8\pi}\int_{\mathbb{R}^3}|\nabla\tilde{V}_{\text{in}}|^2dx+\int_{B_{\mu}}\Psi_{\rho_{\mu}}^*(P\tilde{V}_{\text{in}}-\tilde{V}_{\text{in}}) dx
\end{align*}
with $\tilde{V}_{\text{in}}=V_{\text{in}}-V_{\mu}\in \dot{H}^1(\mathbb{R}^3)$,
where
$\Psi_{\rho_{\mu}}^*$ is the Legendre transformation
of \begin{align*}
\Psi_{\rho_{\mu}(r)}(\tau)=\Phi(\tau+\rho_{\mu}(r))-\Phi(\rho_{\mu}(r))-\Phi'(\rho_{\mu}(r))\tau,
\end{align*}
and
$P\tilde{V}_{\text{in}}=\frac{\int_{B_{\mu}}\frac{\tilde{V}_{\text{in}}}{\Phi''(\rho_{\mu})}dx}{\int_{B_{\mu}}\frac{1}{\Phi''(\rho_{\mu})}dx}$.
The projection $P$ is added during the duality process by using the mass constraint.
We can prove that $B\in C^2(\dot{H}^1(\mathbb{R}^3))$ via a careful analysis by means of the index scope $\gamma_0, \gamma_1\in ({6\over5},2)$. Moreover, we have
\begin{align*} B''(0)={1\over 4\pi}\tilde L_\mu,
\end{align*}
where
$$\tilde L_\mu=-\Delta-{4\pi\over \Phi''(\rho_{\mu})}(I-P): \dot{H}^1(\mathbb{R}^3)\to\dot{H}^{-1}(\mathbb{R}^3).$$
A key point is that the projection term $P$ added here helps us
to relate
$\tilde{L}_\mu$ to the sharp linear stability condition $L_{\mu}|_{Z_\mu}\geq0$, where $Z_\mu$ is the mass-preserving space.  
Indeed,
the linear stability condition $L_{\mu}|_{Z_\mu}\geq0$ is equivalent to $\tilde L_\mu\geq0$. To get the positivity of the second order variation of the dual functional  $B$ at $0$ (i.e. $\tilde{L}_\mu>0$), we need to remove $\ker(\tilde{L}_\mu)$.
If $M' (\mu)\neq0$, then it can be shown that
$\ker\left(\tilde L_\mu\right)=\{\partial_{x^i}V_{\mu}, i=1,2,3\}.$
The three kernel directions  of $\tilde L_\mu$ are removed by a careful choice of the translations $x\mapsto x-y_0$ of the non-rotating stars
such that $ V_{\textup{in}}(x-y_0)-V_{\mu}\perp \partial_{x^i}V_{\mu}$ in $\dot{H}^1(\mathbb{R}^3)$ for $i=1,2,3$.
Consequently, the orbital stability is proved for general perturbations.

Under the spherically symmetric perturbations, we use the estimate
  \eqref{symmetric-p-estimate}
 for nonlinear stability to obtain the uniform energy estimate \eqref{symmetric-p-estimate2}, which leads us to apply a similar approach  as in \cite{CHWY2021,CHWW2023} to  prove the existence of global weak solutions with initial data near the non-rotating stars in Theorem \ref{unconditionalTHM}.
Finally, to drop the continuity assumption of the distance functional on $t$ under the spherically symmetric perturbations, we
first carry out the nonlinear stability analysis to the approximate  solutions which are sufficiently smooth so that the distance functional is continuous on $t$,  and then take limits by using the convexity of the enthalpy to prove the nonlinear stability for the global weak solutions of the Euler-Poisson system.

By Theorem 1.2 in \cite{LZ2022}, the number of growing modes is
\begin{align*}
n^u(\mu)=n^-(L_\mu)-i_\mu,
\end{align*}
where $n^-(L_\mu)$ is the negative dimension of the quadratic form $\langle L_\mu\cdot,\cdot\rangle$ and
\begin{align*}
i_\mu=\begin{cases}
1\quad \text{ if } M'(\mu)\frac{d}{d\mu}\left(\frac{M(\mu)}{R(\mu)}\right)>0\text{ or } M'(\mu)=0, \\
0\quad \text{ if } M'(\mu)\frac{d}{d\mu}\left(\frac{M(\mu)}{R(\mu)}\right)<0\text{ or } \frac{d}{d\mu}\left(\frac{M(\mu)}{R(\mu)}\right)=0.
\end{cases}
\end{align*}

In the case that $M'(\mu)=0$, the index $i_\mu=1$. If $n^-(L_\mu)=1$, then $n^u(\mu)=n^-(L_\mu)-i_\mu=0$. This gives spectral stability of the non-rotating star, but not nonlinear stability in general. A typical  example is  the supermassive stars, where the pressure function satisfies $P(\rho)=\rho^{4/3}$ and the total mass $M$ is independent
on the central density (i.e. $M'(\mu)=0$) \cite{CS1939}. Thus, the supermassive stars are
spectrally stable \cite{LZ2022}.
However, they are proved to be nonlinearly unstable \cite{DLYY2002,CCL2023}
since there exist nearby perturbed solutions with support expanding to infinity when $t\rightarrow\pm\infty$.

Finally, we compare  our  methods and  results  in this
paper with the existing nonlinear stability analysis in the literature.
At first, the nonlinear stability analysis in \cite{RG2003,LTSJ2008} only applies to the non-rotating stars which are global  minimizers of  the energy under the mass constraint. This approach does not work for the non-rotating stars which are  local but not global minimizers.
Our nonlinear stability analysis   in Theorem \ref{main} is directly based on the  sharp linear stability condition $L_\mu|_{Z_{\mu}}\geq0$, where $L_\mu$ is defined in \eqref{def-L-mu} and $Z_\mu$ is the mass-preserving space.
As a corollary of our proof, the linearly stable non-rotating stars are proved to be local minimizers of the energy under the mass constraint.
Secondly, the theory of  nonlinear stability  in \cite{RG2003,LTSJ2008} applies neither to  $\gamma_0<{4/3}$ nor  $\gamma_1<{4/3}$.   Our  results in Theorems \ref{main}-\ref{unconditionalTHM} can be applied to both $\gamma_0\in(6/5,4/3]$ and $\gamma_1\in(6/5,4/3]$.
Thirdly, in \cite{RG2003,LTSJ2008}, the total mass of the perturbed solution is required to be equal to the mass of the steady state. In Theorems \ref{main}-\ref{unconditionalTHM}, we allow the total mass of the perturbed solution to be different from that of the steady state.
Fourthly, nonlinear orbital stability for the   white dwarf stars is proved when the
total mass is beneath a critical mass in \cite{LTSJ2008}.
It is unknown whether  this critical mass is  optimal.
Our results confirm the nonlinear stability of any non-rotating white dwarf stars which are known to be linearly stable \cite{LZ2022} and $M'(\mu)\neq0$ \cite{HU2003,SW2019R}.
Fifthly, when $\gamma\in({6\over 5},{4\over3})$,  the existence of global weak solutions with spherical symmetry in \cite{CHWW2023}
 can not be applied to  Theorem \ref{unconditionalTHM} directly, since it is assumed that the
total mass is beneath a critical mass in \cite{CHWW2023}. This assumption is used to deduce a uniform bound of the internal energy $\int\Phi(\rho)dx$ in \cite{CHWW2023}. Without this assumption, our approach is to use the nonlinear stability
 estimate \eqref{symmetric-p-estimate} to obtain the a prior estimate \eqref{symmetric-p-estimate2} for the approximate solutions near the non-rotating stars.
Finally,
we provide a more precise estimate \eqref{distance-t-spherical symmetric perturbations} for the nonlinear stability of the non-rotating stars  under the spherically symmetric perturbations
(i.e. the distance at any time  is of the same order as the initial distance, which is a more precise statement than the usual Lyapunov stability results (e.g., \cite{RG2003,LTSJ2008})).

This paper is organized as follows. In Section 2, we decompose and estimate the energy-Casimir functional in terms of the distance functional. In Section 3, we introduce the dual functional $B$ and  prove its $C^2$ regularity. In Section 4, we study the kernel and negative directions  of $B''(0)$ (i.e. the second order variation of the dual functional $B$ at $0$). We prove the conditional nonlinear orbital stability for general perturbations  in Section 5, while prove the unconditional nonlinear stability for spherically symmetric perturbations in the last section.

\section{\label{HandM}The energy-Casimir and distance functionals}

First, we recall the  definition of a finite-energy weak solution of CEP system \eqref{EP} in \cite{CHWY2021}.
\begin{definition}\label{weaksol}
A measurable vector function $(\rho, v, V)$ is said to be a global finite-energy weak solution of the Cauchy problem \eqref{EP}-\eqref{inaprx} if

(i) $\rho(t,x)\geq0$ a.e., and $v(t,x)=0$ a.e. on the vacuum set $\{(t,x):\rho(t,x)=0\}$.

(ii) For a.e. $t>0$, the total energy is finite:
\begin{align*}
\begin{cases}
\int_{\mathbb{R}^3}\left(\frac{1}{2}\rho v^2+\Phi(\rho)+\frac{1}{8\pi}|\nabla V|^2\right) (t,x)dx\leq C(E_0,M),\\
\int_{\mathbb{R}^3}\left(\frac{1}{2}\rho v^2+\Phi(\rho)-\frac{1}{8\pi}|\nabla V|^2\right) (t,x)dx\leq \int_{\mathbb{R}^3}\left(\frac{1}{2}\rho v^2+\Phi(\rho)-\frac{1}{8\pi}|\nabla V|^2\right) (0,x)dx,
\end{cases}
\end{align*}
where $E_0:=\int_{\mathbb{R}^3}\left(\frac{1}{2}(\rho v^2)(0,x)+\Phi(\rho(0,x))\right) dx$.

(iii) For any $\phi(t,x)\in C_0^1([0,\infty)\times\mathbb{R}^3)$,
$$\int_0^\infty\int_{\mathbb{R}^3} (\rho\phi_t+\rho v\cdot \nabla\phi) dxdt+\int_{\mathbb{R}^3}{\rho}(0,x)\phi(0,x)dx=0.$$

(iv) For any $\varphi(t,x)\in (C_0^1([0,\infty)\times\mathbb{R}^3))^3$,
\begin{align*}
\int_0^\infty\int_{\mathbb{R}^3} \left(\rho v\varphi_t+(P(\rho)\text{div}\varphi+\rho v \cdot (v \cdot \nabla)\varphi\right) dxdt\\
+\int_{\mathbb{R}^3}{\rho}(0,x){v}(0,x)\varphi(0,x)dx=\int_0^\infty\int_{\mathbb{R}^3}\rho\nabla V\varphi dxdt.\end{align*}

(v) For any $\xi(x)\in C_0^1(\mathbb{R}^3)$,
$$\int_{\mathbb{R}^3} \nabla V(t,x)\cdot \nabla \xi(x)dx=-4\pi \int_{\mathbb{R}^3}\rho(t,x)\xi(x)dx \quad\; \text{for a.e.}\quad t\geq 0.$$

\end{definition}

Let $V_{\mu}=4\pi\Delta^{-1}\rho_\mu$. It follows from Lemma 3.6 in \cite{LZ2022}  that $V_{\mu}(R_{\mu})=- \frac{M_{\mu}}{R_{\mu}}$.
We define the Energy-Casimir functional
\begin{align}\label{Energy-Casimir functional}
H(\rho,v)=&E(\rho,v)-V_{\mu}(R_{\mu})\int_{\mathbb{R}^3}\rho dx\\\nonumber
=&\int_{\mathbb{R}^3}\left(\frac{1}{2}\rho|v|^2+\Phi(\rho)-\frac{1}{8\pi}|\nabla V|^2\right)dx-V_{\mu}(R_{\mu})\int_{\mathbb{R}^3}\rho dx,\quad(\rho,v)\in X.
\end{align}
Here, we note that $\int_{\mathbb{R}^3}|\nabla V|^2dx<\infty$ for $(\rho,v)\in X$. Indeed,
let
 \begin{align}\label{def-chi1-chi2}
 \chi_1=\chi_{\{{x}\in\mathbb{R}^3|\varepsilon>\rho({x})\geq0\}} \text{ and }\chi_2=\chi_{\{{x}\in\mathbb{R}^3|\rho({x})\geq\varepsilon\}}
 \end{align}
 for fixed $|\varepsilon|\ll1$.
By our assumption \eqref{P2}-\eqref{P3}, $\Phi(\rho)\geq C\rho^{\gamma_{0}}$ for $0<\rho<\varepsilon$, $\varepsilon$ small enough and $\Phi(\rho)\geq C\rho^{\gamma_1}$ for $\rho>N$, $N$ big enough, where $\gamma_{0},\gamma_1\in({6\over5},2)$. Since $\lim_{s\to0^+}{P'(s)\over s^{\gamma_{0}-1}}=K_0>0$,
there exist $\delta_{0}, C_{0}>0$ such that  $\Phi(\rho)={1\over2}{\Phi''(\tilde \rho)}\rho^2={P'(\tilde \rho)\over2\tilde \rho}\rho^2={P'(\tilde \rho)\over2\tilde \rho^{\gamma_{0}-1}}\tilde \rho^{\gamma_{0}-2}\rho^2\geq C_{0} \rho^{\gamma_{0}-2}\rho^{2}\geq\delta_{0}$ for $\varepsilon\leq \rho\leq N$, where $\tilde \rho\in(0,\rho)$. Let $C_1=N^{-\gamma_1}\delta_{0}$. Then $C_1\rho^{\gamma_1}\leq C_1N^{\gamma_1}=\delta_{0}\leq \Phi(\rho)$ for $\varepsilon\leq \rho\leq N$, and thus,
\begin{align}\label{rho-large}
C\rho^{\gamma_1}\leq \Phi(\rho) \;\; \text{ for } \;\; \rho\geq\varepsilon.
\end{align}
Thus,
\begin{align*}
\int_{\mathbb{R}^3}|\nabla V|^2dx=&-4\pi\int_{\mathbb{R}^3}\rho Vdx\leq \|\rho\|_{L^{6/5}(\mathbb{R}^3)}\|V\|_{L^6(\mathbb{R}^3)}\leq C\|\rho\|_{L^{6/5}(\mathbb{R}^3)}^2\\
\leq&C\|\rho\chi_1\|_{L^{6/5}(\mathbb{R}^3)}^2+C\|\rho\chi_2\|_{L^{6/5}(\mathbb{R}^3)}^2\\
\leq& C\|\rho\chi_1\|_{L^1(\mathbb{R}^3)}^{2a_{0}}\|\rho\chi_1\|_{L^{\gamma_{0}}(\mathbb{R}^3)}^{2(1-a_{0})}
+C\|\rho\chi_2\|_{L^1(\mathbb{R}^3)}^{2a_1}\|\rho\chi_2\|_{L^{\gamma_1}(\mathbb{R}^3)}^{2(1-a_1)}\\
\leq &C\left(\int_{\mathbb{R}^3}\rho dx\right)^{2a_{0}}\left(\int_{\mathbb{R}^3}\Phi(\rho )dx\right)^{2(1-a_{0})}+C\left(\int_{\mathbb{R}^3}\rho dx\right)^{2a_{1}}\left(\int_{\mathbb{R}^3}\Phi(\rho )dx\right)^{2(1-a_{1})},
\end{align*}
where $a_i={5\gamma_i-6\over 6\gamma_i-6}$ for $i=0,1$.
Then we divide the difference between the Energy-Casimir functionals $H(\rho,v)$ and $H(\rho_{\mu},0)$ into four terms:
\begin{align}\label{expandCE}
H(\rho,v)-H(\rho_{\mu},0)=&\frac{1}{2}\int_{\mathbb{R}^3}\rho|v|^2dx+\int_{\mathbb{R}^3}(\Phi(\rho)-\Phi(\rho_{\mu})) dx
\\
\notag&-\frac{1}{8\pi}\int_{\mathbb{R}^3}(|\nabla V|^2-|\nabla V_{\mu}|^2)dx-V_{\mu}(R_{\mu})\int_{\mathbb{R}^3}(\rho-\rho_{\mu})dx\\
\notag=&I_1+I_2+I_3+I_4, \quad(\rho,v)\in X.
\end{align}
In order to establish the relation between the  Energy-Casimir terms $H(\rho,v)-H(\rho_{\mu},0)$ and the distance functional $d((\rho,v),(\rho_\mu,0))$ defined in \eqref{distancefun}, we start by estimating $I_2, I_3$ and $I_4$.
Let $B_{\mu}=B(0,R_{\mu})=\{x\in \mathbb{R}^3|\rho_{\mu}(x)>0\}$.  Then by a straightforward computation, we have
\begin{align}\label{I2}
I_2=&\int_{\mathbb{R}^3}(\Phi(\rho)-\Phi(\rho_{\mu})) dx
=\int_{B_{\mu}}(\Phi(\rho)-\Phi(\rho_{\mu})) dx+\int_{B_\mu^c}\Phi(\rho) dx\\\nonumber
=&\int_{B_{\mu}}\left(\Phi(\rho)-\Phi(\rho_{\mu})-\Phi'(\rho_{\mu})(\rho-\rho_{\mu})\right) dx+\int_{B_\mu^c}\Phi(\rho) dx
+\int_{B_{\mu}}\Phi'(\rho_{\mu})(\rho-\rho_{\mu}) dx\\\nonumber
=&I_{21}+I_{22}+I_{23}.
\end{align}
By the Poisson equation $\eqref{EP}_3$, one has
\begin{align}\label{I3}
I_3&=-\frac{1}{8\pi}\int_{\mathbb{R}^3}(|\nabla V|^2-|\nabla V_{\mu}|^2)dx\\\nonumber
&=-\frac{1}{8\pi}\int_{\mathbb{R}^3}|\nabla V-\nabla V_{\mu}|^2dx-\frac{1}{4\pi}\int_{\mathbb{R}^3}\nabla V_{\mu}\cdot\left(\nabla V-\nabla V_{\mu}\right) dx\\\nonumber
&=-\frac{1}{8\pi}\int_{\mathbb{R}^3}|\nabla V-\nabla V_{\mu}|^2dx+\int_{\mathbb{R}^3} V_{\mu}(\rho-\rho_{\mu}) dx\\\nonumber
&=-\frac{1}{8\pi}\int_{\mathbb{R}^3}|\nabla V-\nabla V_{\mu}|^2dx+\int_{B_\mu^c} V_{\mu}\rho dx+\int_{B_{\mu}} V_{\mu}(\rho-\rho_{\mu}) dx\\\nonumber
&=I_{31}+I_{32}+I_{33}.
\end{align}
Moreover,
\begin{align}\label{I4}
I_4&=-V_{\mu}(R_{\mu})\int_{B_\mu^c}\rho dx-V_{\mu}(R_{\mu})\int_{B_{\mu}}\left(\rho-\rho_{\mu}\right)dx=I_{41}+I_{42}.
\end{align}
The non-rotating star $(\rho_\mu,0)$  satisfies the following equation
$$
\nabla p(\rho_{\mu})=-\rho_{\mu}\nabla V_{\mu},
$$
which gives
$$
\Phi'(\rho_{\mu})=- V_{\mu}+V_{\mu}(R_{\mu}).
$$
Thus,
\begin{align}\label{I23-33-42}
I_{23}+I_{33}+I_{42}=\int_{B_{\mu}}\left(\Phi'(\rho_{\mu})+ V_{\mu}-V_{\mu}(R_{\mu})\right)(\rho-\rho_{\mu}) dx=0.
\end{align}
Combining \eqref{expandCE}-\eqref{I23-33-42}, we have
\begin{align}\label{Hrh0ovrhomu02}
&H(\rho,v)-H(\rho_{\mu},0)=I_1+I_{21}+I_{22}+I_{31}+I_{32}+I_{41}\\\nonumber
=&\frac{1}{2}\int_{\mathbb{R}^3}\rho|v|^2dx+\int_{B_{\mu}}\left(\Phi(\rho)-\Phi(\rho_{\mu})-\Phi'(\rho_{\mu})(\rho-\rho_{\mu})\right) dx\\\nonumber
&+\int_{B_\mu^c}\Phi(\rho) dx-\frac{1}{8\pi}\int_{\mathbb{R}^3}|\nabla V-\nabla V_{\mu}|^2dx+\int_{B_\mu^c} (V_{\mu}-V_{\mu}(R_{\mu}))\rho dx.
\end{align}
Let $\rho_{\text{in}}=\rho\chi_{B_{\mu}}$, $\rho_{\text{out}}=\rho-\rho_{\text{in}}$, $\Delta V_{\text{in}}=4\pi \rho_{\text{in}}$  and $\Delta V_{\text{out}}=4\pi \rho_{\text{out}}$, where $\chi_{B_\mu}$ is the indicator function for $B_\mu$. For any perturbed density $\rho$ near the steady state $(\rho_{\mu}(x+x_0),0)$, we define
$\rho_{\text{in}}=\rho\chi_{B_{\mu}-x_0}$ without causing confusion, where  $B_{\mu}-x_0=\{x|x=y-x_0,y\in B_{\mu}\}$ .
Then
\begin{align}\label{I31}
&I_{31}=
-\frac{1}{8\pi}\int_{\mathbb{R}^3}|\nabla V-\nabla V_{\mu}|^2dx\\\nonumber
=&-\frac{1}{8\pi}\int_{\mathbb{R}^3}|\nabla V_{\text{in}}+\nabla V_{\text{out}}-\nabla V_{\mu}|^2dx\\\nonumber
=&-\frac{1}{8\pi}\int_{\mathbb{R}^3}|\nabla V_{\text{in}}-\nabla V_{\mu}|^2dx-\frac{1}{8\pi}\int_{\mathbb{R}^3}|\nabla V_{\text{out}}|^2dx-\frac{1}{4\pi}\int_{\mathbb{R}^3}\nabla V_{\text{out}}\cdot(\nabla V_{\text{in}}-\nabla V_{\mu})dx.
\end{align}
Inserting \eqref{I31} into \eqref{Hrh0ovrhomu02}, we have
\begin{align*}
H(\rho,v)-H(\rho_{\mu},0)
&=\frac{1}{2}\int_{\mathbb{R}^3}\rho|v|^2dx+\int_{B_{\mu}}\left(\Phi(\rho)-\Phi(\rho_{\mu})-\Phi'(\rho_{\mu})(\rho-\rho_{\mu})\right) dx+\int_{B_\mu^c}\Phi(\rho) dx\\
&\quad-\frac{1}{8\pi}\int_{\mathbb{R}^3}|\nabla V_{\text{in}}-\nabla V_{\mu}|^2dx+\int_{B_\mu^c} (V_{\mu}-V_{\mu}(R_{\mu}))\rho dx\\
&\quad
-\frac{1}{8\pi}\int_{\mathbb{R}^3}|\nabla V_{\text{out}}|^2dx
-\frac{1}{4\pi}\int_{\mathbb{R}^3}\nabla V_{\text{out}}\cdot(\nabla V_{\text{in}}-\nabla V_{\mu})dx.
\end{align*}

Motivated by the above expression, we define
the distance between $(\rho,v)\in X$ and $(\rho_\mu,0)$ by
\begin{align}\label{distancefun}
d((\rho,v),(\rho_\mu,0))&=\frac{1}{2}\int_{\mathbb{R}^3}\rho|v|^2dx
+\int_{B_\mu}\left(\Phi(\rho)-\Phi(\rho_\mu)-\Phi'(\rho_\mu)(\rho-\rho_\mu)\right) dx\\
\notag&\quad+\int_{B_\mu^c}\Phi(\rho) dx+\frac{1}{8\pi}\int_{\mathbb{R}^3}|\nabla V_{\text{in}}-\nabla V_\mu|^2dx+\int_{B_\mu^c} (V_\mu-V_\mu(R_{\mu}))\rho dx.
\end{align}
Now, we check that  the distance functional \eqref{distancefun} is well-defined for $(\rho,v)\in X$, where $X$ is defined in \eqref{def-X-mu}.

\begin{lemma}\label{distance-well defined}
The distance $d((\rho,v),(\rho_\mu,0))$ in  \eqref{distancefun} is  well-defined for $(\rho,v)\in X$.
\end{lemma}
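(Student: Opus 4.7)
The plan is to verify finiteness term by term for the five contributions to $d((\rho,v),(\rho_\mu,0))$, relying on the defining properties of $X$, the convexity and non-negativity of $\Phi$, and the regularity of the background star $(\rho_\mu,V_\mu)$ (in particular, $\rho_\mu\in L^\infty(B_\mu)$, $\Phi(\rho_\mu),\Phi'(\rho_\mu)\in L^\infty(B_\mu)$, and the explicit formula $V_\mu(x)=-M_\mu/|x|$ on $B_\mu^c$). The kinetic term $\tfrac12\int\rho|v|^2\,dx$ and the outer enthalpy term $\int_{B_\mu^c}\Phi(\rho)\,dx\leq\int_{\mathbb{R}^3}\Phi(\rho)\,dx$ are immediately finite by the definition \eqref{def-X-mu} of $X$. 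For the last term, I would use that outside $B_\mu$ one has $V_\mu(x)-V_\mu(R_\mu)=M_\mu/R_\mu-M_\mu/|x|\in[0,M_\mu/R_\mu]$, hence
\begin{equation*}
0\leq \int_{B_\mu^c}(V_\mu-V_\mu(R_\mu))\rho\,dx\leq \frac{M_\mu}{R_\mu}\int_{\mathbb{R}^3}\rho\,dx<\infty.
\end{equation*}

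For the relative enthalpy term on $B_\mu$, the integrand $\Phi(\rho)-\Phi(\rho_\mu)-\Phi'(\rho_\mu)(\rho-\rho_\mu)$ is non-negative by convexity of $\Phi$ (since $\Phi''=P'/\rho>0$ by \eqref{P1}), so I only need a pointwise upper bound. Using $\|\rho_\mu\|_{L^\infty(B_\mu)}<\infty$ together with the continuity of $\Phi$ and $\Phi'$ on $[0,\|\rho_\mu\|_{L^\infty}]$, there exists $C=C(\mu)>0$ such that
\begin{equation*}
0\leq \Phi(\rho)-\Phi(\rho_\mu)-\Phi'(\rho_\mu)(\rho-\rho_\mu)\leq \Phi(\rho)+C(1+\rho)\quad\text{on }B_\mu,
\end{equation*}
whose integral over $B_\mu$ is controlled by $\int\Phi(\rho)\,dx$, $\int\rho\,dx$ and $|B_\mu|$, all of which are finite.

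The only genuinely non-trivial term is $\tfrac{1}{8\pi}\int_{\mathbb{R}^3}|\nabla V_{\textup{in}}-\nabla V_\mu|^2\,dx$. Here the strategy is to show separately that $\nabla V_{\textup{in}}$ and $\nabla V_\mu$ belong to $L^2(\mathbb{R}^3)$. For $\nabla V_\mu$ this is classical since $\rho_\mu$ is compactly supported and bounded. For $\nabla V_{\textup{in}}$ I would simply rerun the Hardy–Littlewood–Sobolev argument already carried out in the excerpt for $\nabla V$: the interpolation estimate
\begin{equation*}
\|\rho_{\textup{in}}\|_{L^{6/5}(\mathbb{R}^3)}^2\leq C\Bigl(\int_{\mathbb{R}^3}\rho\,dx\Bigr)^{2a_0}\Bigl(\int_{\mathbb{R}^3}\Phi(\rho)\,dx\Bigr)^{2(1-a_0)}+C\Bigl(\int_{\mathbb{R}^3}\rho\,dx\Bigr)^{2a_1}\Bigl(\int_{\mathbb{R}^3}\Phi(\rho)\,dx\Bigr)^{2(1-a_1)}
\end{equation*}
applies verbatim to $\rho_{\textup{in}}\leq\rho$, and together with $\int|\nabla V_{\textup{in}}|^2\,dx=-4\pi\int\rho_{\textup{in}}V_{\textup{in}}\,dx\lesssim\|\rho_{\textup{in}}\|_{L^{6/5}}^2$ this yields $\nabla V_{\textup{in}}\in L^2(\mathbb{R}^3)$. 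The triangle inequality then gives $\nabla V_{\textup{in}}-\nabla V_\mu\in L^2(\mathbb{R}^3)$.

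The main (mild) obstacle is organizing the splitting of $\rho$ into its small and large parts via the cutoffs $\chi_1,\chi_2$ from \eqref{def-chi1-chi2} so that the lower bound \eqref{rho-large} and the assumption \eqref{P2} can be combined to control $\|\rho_{\textup{in}}\|_{L^{6/5}}$; but this is exactly the computation already performed in the excerpt to show $\int|\nabla V|^2<\infty$, so no new analytic idea is required. Summing the five finite contributions completes the proof.
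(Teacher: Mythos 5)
Your proposal is correct and follows essentially the same route as the paper: divide $d$ into the five terms $d_1,\dots,d_5$ and verify each is a well-defined non-negative number. The only difference is one of emphasis — the paper dispenses with finiteness in a single sentence ("Every term is finite since $(\rho,v)\in X$") and devotes the written proof to showing $d_2,d_5\geq 0$ via convexity of $\Phi$ and the explicit formula $V_\mu(r)=-M_\mu/r$ for $r\geq R_\mu$, whereas you spell out the finiteness of each term in detail (the pointwise bound on the relative enthalpy integrand using $\rho_\mu\in L^\infty$, the HLS/interpolation estimate for $\nabla V_{\textup{in}}$) while treating non-negativity more lightly; both halves are needed and both arguments are sound.
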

\begin{proof}
Let $(\rho,v)\in X$.
We divide $d((\rho,v),(\rho_\mu,0))$ into five terms:
\begin{align}\label{d1-5}
\begin{cases}
d_1((\rho,v),(\rho_\mu,0))=\frac{1}{2}\int_{\mathbb{R}^3}\rho|v|^2dx,\\
d_2((\rho,v),(\rho_\mu,0))=\int_{B_{\mu}}\left(\Phi(\rho)-\Phi(\rho_{\mu})-\Phi'(\rho_{\mu})(\rho-\rho_{\mu})\right) dx,\\
d_3((\rho,v),(\rho_\mu,0))=\int_{B_\mu^c}\Phi(\rho) dx,\\
d_4((\rho,v),(\rho_\mu,0))=\frac{1}{8\pi}\int_{\mathbb{R}^3}|\nabla V_{\text{in}}-\nabla V_{\mu}|^2dx,\\
d_5((\rho,v),(\rho_\mu,0))=\int_{B_\mu^c} (V_{\mu}-V_{\mu}(R_{\mu}))\rho dx.\\
\end{cases}
\end{align}
Every term in \eqref{d1-5} is finite since $(\rho,v)\in X$. It suffices to show that $d_i((\rho,v),(\rho_\mu,0))\geq0$ for $i=2,5$.
The fact that $\Phi''\geq0$ implies that
\begin{align*}
d_2((\rho,v),(\rho_\mu,0))=\int_{B_{\mu}}\left(\Phi(\rho)-\Phi(\rho_{\mu})-\Phi'(\rho_{\mu})(\rho-\rho_{\mu})\right) dx=\int_{B_{\mu}}\Phi''(\xi_{\rho,\rho_\mu})(\rho-\rho_{\mu})^2 dx\geq0,
\end{align*}
where $\xi_{\rho,\rho_\mu}$ is between $\rho_{\mu}$ and $\rho$.
Since $V_{\mu}(r)=-\frac{M_{\mu}}{r}$ for $r\geq R_{\mu}$ by Lemma 3.6 in \cite{LZ2022},  we have
\begin{align*}
d_5((\rho,v),(\rho_\mu,0))&=\int_{B_\mu^c} (V_{\mu}-V_{\mu}(R_{\mu}))\rho dx
=\int_{B_\mu^c} \left(-\frac{M_{\mu}}{r}+\frac{M_{\mu}}{R_{\mu}}\right)\rho dx\geq0.
\end{align*}
\end{proof}
For the sake of convenience, we denote $d=d((\rho,v),(\rho_\mu,0))$ and $d_i=d_i((\rho,v),(\rho_\mu,0))$ for $i=1,\cdots, 5$.
For $d_5$, we need the following lower bound.
\begin{lemma}
Let $(\rho,v)\in X$. Then for any $\delta>0$,
\begin{align}\label{out-c}
\frac{\delta M_{\mu}}{(R_{\mu}+\delta)R_{\mu}}\int_{ B(0,R_{\mu}+\delta)^c}\rho_{\rm{out}}dx\leq d_5.
\end{align}
\end{lemma}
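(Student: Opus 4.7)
The plan is a direct computation based on the explicit form of $V_\mu$ outside the support of $\rho_\mu$. By the cited Lemma 3.6 of \cite{LZ2022}, $V_\mu(r) = -M_\mu/r$ for $r \geq R_\mu$, so on $B_\mu^c$ we have
\begin{align*}
V_\mu(x) - V_\mu(R_\mu) = -\frac{M_\mu}{|x|} + \frac{M_\mu}{R_\mu} = \frac{M_\mu(|x|-R_\mu)}{|x|R_\mu} \geq 0,
\end{align*}
which is the same identity used in the proof of Lemma~\ref{distance-well defined} to establish $d_5 \geq 0$. Since this quantity is nonnegative on all of $B_\mu^c$ and the integrand $\rho \geq 0$ as well, restricting the domain of integration from $B_\mu^c$ to $B(0,R_\mu + \delta)^c \subset B_\mu^c$ only decreases the integral, giving
\begin{align*}
d_5 = \int_{B_\mu^c}(V_\mu - V_\mu(R_\mu))\,\rho\,dx \geq \int_{B(0,R_\mu+\delta)^c}(V_\mu - V_\mu(R_\mu))\,\rho\,dx.
\end{align*}

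Next I would observe that the function $r \mapsto 1/R_\mu - 1/r$ is monotone increasing in $r$, so on $\{|x| \geq R_\mu + \delta\}$ it satisfies the uniform lower bound
\begin{align*}
\frac{1}{R_\mu} - \frac{1}{|x|} \geq \frac{1}{R_\mu} - \frac{1}{R_\mu + \delta} = \frac{\delta}{R_\mu(R_\mu+\delta)}.
\end{align*}
Multiplying by $M_\mu$ yields $V_\mu - V_\mu(R_\mu) \geq \delta M_\mu/((R_\mu+\delta)R_\mu)$ on $B(0,R_\mu+\delta)^c$. Finally, since $\rho_{\text{in}} = \rho\,\chi_{B_\mu}$ vanishes on $B_\mu^c$, we have $\rho = \rho_{\text{out}}$ on $B(0,R_\mu+\delta)^c$, and plugging the pointwise lower bound into the previous integral gives exactly \eqref{out-c}.

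There is no real obstacle here: the estimate reduces to the explicit formula for $V_\mu$ outside the star together with the elementary monotonicity of $1/R_\mu - 1/r$. The only minor point to verify is that the restriction from $B_\mu^c$ to $B(0,R_\mu+\delta)^c$ is legitimate, which follows from the nonnegativity of both $\rho$ and $V_\mu - V_\mu(R_\mu)$ on $B_\mu^c$, as already noted in Lemma~\ref{distance-well defined}.
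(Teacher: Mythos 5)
Your argument is correct and follows the same strategy as the paper: split off (or restrict to) the region $|x|\geq R_\mu+\delta$, use the explicit formula $V_\mu(r)=-M_\mu/r$ from Lemma~3.6 of \cite{LZ2022} to see the integrand is nonnegative near $B_\mu$, and then apply the monotone lower bound $1/R_\mu-1/r\geq \delta/(R_\mu(R_\mu+\delta))$ on the far region. The only cosmetic difference is that you phrase the first step as shrinking the domain of integration rather than splitting the integral and discarding a nonnegative piece, which is the same maneuver.
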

\begin{proof}
Again by Lemma 3.6 in \cite{LZ2022}, we have
\begin{align*}
d_5=&\left(\int_{B_\mu^c\cap B(0,R_{\mu}+\delta)}+\int_{ B(0,R_{\mu}+\delta)^c} \right) \left(-\frac{M_{\mu}}{r}+\frac{M_{\mu}}{R_{\mu}}\right)\rho_{\text{out}} dx\\
\geq&\int_{B_\mu^c\cap B(0,R_{\mu}+\delta)}\left(-\frac{M_{\mu}}{r}+\frac{M_{\mu}}{R_{\mu}}\right)\rho_{\text{out}} dx+\left(-\frac{M_{\mu}}{R_{\mu}+\delta}+\frac{M_{\mu}}{R_{\mu}}\right)\int_{ B(0,R_{\mu}+\delta)^c}\rho_{\text{out}} dx\\
\geq&\frac{\delta M_{\mu}}{(R_{\mu}+\delta)R_{\mu}}\int_{ B(0,R_{\mu}+\delta)^c}\rho_{\text{out}}dx.
\end{align*}
\end{proof}
Thus, we have the relation between $H$ and $d$:
\begin{align*}
H(\rho,v)-H(\rho_{\mu},0)=&d_1+d_2+d_3
-d_4+d_5\\\nonumber
&-\frac{1}{8\pi}\int_{\mathbb{R}^3}|\nabla V_{\text{out}}|^2dx
-\frac{1}{4\pi}\int_{\mathbb{R}^3}\nabla V_{\text{out}}\cdot(\nabla V_{\text{in}}-\nabla V_{\mu})dx.
\end{align*}
We show that the last two terms are high order terms of the distance function. More precisely, we have the following result.
\begin{lemma}\label{high order term EC functional}
\begin{align}\label{firsthighorder}
\frac{1}{8\pi}\int_{\mathbb{R}^3}|\nabla V_{\textup{out}}|^2dx&\leq C(d^{\frac{2}{\gamma_{0}}}+d^{\frac{2}{\gamma_1}}+d^{5\over3}),\\\label{secondhighorder}
\frac{1}{4\pi}\int_{\mathbb{R}^3}\nabla V_{\textup{out}}\cdot(\nabla V_{\textup{in}}-\nabla V_{\mu})dx
&\leq Cd^{\frac{1}{2}}\left(d^{\frac{2}{\gamma_{0}}}+d^{\frac{2}{\gamma_1}}+d^{5\over3}\right)^{1\over2}.
\end{align}
\end{lemma}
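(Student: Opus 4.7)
The plan is to reduce the first bound to an $L^{6/5}$-norm estimate on $\rho_{\textup{out}}$ and then interpolate. Since $\Delta V_{\textup{out}} = 4\pi\rho_{\textup{out}}$, integration by parts followed by the Sobolev embedding $\dot H^1(\mathbb R^3)\hookrightarrow L^6(\mathbb R^3)$ gives $\int |\nabla V_{\textup{out}}|^2\,dx = -4\pi\int V_{\textup{out}}\rho_{\textup{out}}\,dx \le C\,\|\nabla V_{\textup{out}}\|_{L^2}\|\rho_{\textup{out}}\|_{L^{6/5}}$, hence $\int |\nabla V_{\textup{out}}|^2\,dx \le C\|\rho_{\textup{out}}\|_{L^{6/5}}^2$. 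It therefore remains to bound $\|\rho_{\textup{out}}\|_{L^{6/5}}$ in terms of $d$, following the same interpolation scheme already used for $\int|\nabla V|^2 dx$ in the excerpt.

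To carry this out, split $\rho_{\textup{out}} = \rho_{\textup{out}}\chi_1 + \rho_{\textup{out}}\chi_2$ as in \eqref{def-chi1-chi2}, and decompose $B_\mu^c = A_\delta \cup B(0,R_\mu+\delta)^c$ with $A_\delta := B(0,R_\mu+\delta)\setminus B_\mu$ for some fixed $\delta>0$. On the annulus $A_\delta$, which has finite Lebesgue measure, Hölder's inequality together with $\Phi(\rho)\ge C\rho^{\gamma_0}$ for $\rho<\varepsilon$ and $\Phi(\rho)\ge C\rho^{\gamma_1}$ for $\rho\ge \varepsilon$ (the latter being \eqref{rho-large}) gives $\|\rho_{\textup{out}}\chi_i\|_{L^{6/5}(A_\delta)}^2 \le C\|\rho_{\textup{out}}\chi_i\|_{L^{\gamma_i}(A_\delta)}^2 \le C d_3^{2/\gamma_i}$ for $i=0,1$; these produce the $d^{2/\gamma_0}$ and $d^{2/\gamma_1}$ terms in \eqref{firsthighorder}. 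On the exterior region $B(0,R_\mu+\delta)^c$, combine the same $L^{\gamma_i}$ bounds with the $L^1$ bound $\|\rho_{\textup{out}}\|_{L^1(B(0,R_\mu+\delta)^c)}\le C d_5$ furnished by \eqref{out-c}, and interpolate $\|f\|_{L^{6/5}}\le \|f\|_{L^1}^{a_i}\|f\|_{L^{\gamma_i}}^{1-a_i}$ with $a_i=\frac{5\gamma_i-6}{6(\gamma_i-1)}$. Squaring and using $d_3,d_5\le d$, the resulting exponent is $2a_i + 2(1-a_i)/\gamma_i$, which simplifies algebraically to $5/3$ for both $i=0,1$. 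Summing all four contributions yields \eqref{firsthighorder}.

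Estimate \eqref{secondhighorder} is then immediate from Cauchy–Schwarz: $\bigl|\int \nabla V_{\textup{out}}\cdot(\nabla V_{\textup{in}}-\nabla V_\mu)\,dx\bigr| \le \|\nabla V_{\textup{out}}\|_{L^2}\|\nabla V_{\textup{in}}-\nabla V_\mu\|_{L^2}$, together with $\|\nabla V_{\textup{in}}-\nabla V_\mu\|_{L^2}^2 = 8\pi d_4 \le 8\pi d$ and the bound from \eqref{firsthighorder}. The only real bookkeeping issue is confirming the collapse of the interpolation exponent to $5/3$; this is a direct consequence of the defining Sobolev relation $5/6 = a_i + (1-a_i)/\gamma_i$, and the restrictions $\gamma_0,\gamma_1\in(6/5,2)$ ensure $a_i\in(0,1)$, so the argument goes through uniformly in both the small- and large-density regimes.
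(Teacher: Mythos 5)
Your proposal is correct and follows essentially the same route as the paper: reduce to an $L^{6/5}$ bound on $\rho_{\textup{out}}$ via $\int|\nabla V_{\textup{out}}|^2\le C\|\rho_{\textup{out}}\|_{L^{6/5}}^2$, split the exterior region into the annulus $B_\mu^c\cap B(0,R_\mu+\delta)$ (handled by H\"older, giving $d^{2/\gamma_0}+d^{2/\gamma_1}$) and $B(0,R_\mu+\delta)^c$ (handled by $L^1$--$L^{\gamma_i}$ interpolation with the $L^1$ control from \eqref{out-c}, giving $d^{5/3}$ since $a_i+(1-a_i)/\gamma_i=5/6$), then Cauchy--Schwarz for \eqref{secondhighorder}. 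The only cosmetic difference is that the paper first decomposes $V_{\textup{out}}=V^1_{\textup{out}}+V^2_{\textup{out}}$ and bounds each $\|\rho^j_{\textup{out}}\|_{L^{6/5}}$, whereas you decompose $\rho_{\textup{out}}$ after taking the global $L^{6/5}$ norm; these are equivalent by the triangle inequality.
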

\begin{proof}
Denote
\begin{align}\label{denoteout}
\begin{cases}
\rho^1_{\text{out}}=\rho_{\text{out}}\chi_{B_\mu^c\cap B(0,R_{\mu}+\delta)},\\
\rho^2_{\text{out}}=\rho_{\text{out}}\chi_{ B_\mu^c(0,R_{\mu}+\delta)},\\
\Delta V_{\text{out}}^i=4\pi \rho_{\text{out}}^i \text{ for } i=1,2.
\end{cases}
\end{align}
where $\chi_\Omega$ denote the indicator function for $\Omega$. Then $\rho_{\text{out}}=\rho^1_{\text{out}}+\rho^2_{\text{out}}$.
Now we begin to separate the first  term into two parts:
\begin{align}\label{rho-out}
\frac{1}{8\pi}\int_{\mathbb{R}^3}|\nabla V_{\text{out}}|^2dx&\leq C\int_{\mathbb{R}^3}\left(|\nabla V^1_{\text{out}}|^2+|\nabla V^2_{\text{out}}|^2\right)dx.
\end{align}
By integration by parts, $\eqref{denoteout}_3$, and Young's inequality, we obtain
\begin{align}\nonumber
\frac{1}{8\pi}\int_{\mathbb{R}^3}|\nabla V^1_{\text{out}}|^2dx&=-\frac{1}{8\pi}\int_{\mathbb{R}^3} V^1_{\text{out}}\Delta V^1_{\text{out}}dx\\\nonumber
&=-{1\over2}\int_{\mathbb{R}^3} V^1_{\text{out}}\rho^1_{\text{out}}dx\\\nonumber
&\leq {1\over2}\|\rho^1_{\text{out}}\|_{L^{6/5}(\mathbb{R}^3)}\|V^1_{\text{out}}\|_{L^{6}(\mathbb{R}^3)}\\\nonumber
&\leq C\|\rho^1_{\text{out}}\|_{L^{6/5}(\mathbb{R}^3)}^2\leq C
\|\rho^1_{\text{out}}\chi_1\|_{L^{6/5}(\tilde B)}^2+C\|\rho^1_{\text{out}}\chi_2\|_{L^{6/5}(\tilde B)}^2\\\nonumber
&\leq C\|\rho^1_{\text{out}}\chi_1\|_{L^{\gamma_{0}}(\tilde B)}^2+C\|\rho^1_{\text{out}}\chi_2\|_{L^{\gamma_1}(\tilde B)}^2\\
&\leq Cd_3^{2\over\gamma_{0}}+Cd_3^{2\over\gamma_1} \leq C(d^{2\over\gamma_{0}}+d^{2\over\gamma_1}),
\label{rho-out-1}
\end{align}
where $\chi_1$ and $\chi_2$ are defined in \eqref{def-chi1-chi2} and $\tilde B=B_\mu^c\cap B(0,R_{\mu}+\delta)$. We use the fact that $\gamma_0,\gamma_1>6/5$
and the H\"{o}lder inequality $\|u\|_{L^p(\Omega)}\leq \|u\|_{L^q(\Omega)}|\Omega|^{{q-p\over pq}}$
 for $u\in L^q(\Omega)$, where $\Omega$ is  a bounded domain and $1\leq p\leq q\leq \infty$.

By Sobolev interpolation inequality and  \eqref{out-c}, we have
\begin{align}\nonumber
\frac{1}{8\pi}\int_{\mathbb{R}^3}|\nabla V_{\text{out}}^2|^2dx
\leq& C\|\rho^2_{\text{out}}\|_{L^{6/5}(\mathbb{R}^3)}^2\leq C
\|\rho^2_{\text{out}}\chi_1\|_{L^{6/5}(\mathbb{R}^3)}^2+C\|\rho^2_{\text{out}}\chi_2\|_{L^{6/5}(\mathbb{R}^3)}^2\\\nonumber
\leq& C\|\rho_{\text{out}}^2\chi_1\|_{L^1(\mathbb{R}^3)}^{2a_{0}}\|\rho_{\text{out}}^2\chi_1\|_{L^{\gamma_{0}}(\mathbb{R}^3)}^{2(1-a_{0})}
+C\|\rho_{\text{out}}^2\chi_2\|_{L^1(\mathbb{R}^3)}^{2a_1}\|\rho_{\text{out}}^2\chi_2\|_{L^{\gamma_1}(\mathbb{R}^3)}^{2(1-a_1)}\\
\leq& C d_5^{2a_{0}} d_3^{2(1-a_{0})\over \gamma_{0}}+
Cd_5^{2a_1} d_3^{2(1-a_1)\over \gamma_1}\leq C d^{5\over3},
\label{rho-out-2}
\end{align}
where $a_i={5\gamma_i-6\over 6\gamma_i-6}$ for $i=0,1$.
Combining \eqref{rho-out}, \eqref{rho-out-1} and \eqref{rho-out-2}, we obtain
\eqref{firsthighorder}.
 We have
\begin{align*}
\notag\frac{1}{4\pi}\int_{\mathbb{R}^3}\nabla V_{\text{out}}\cdot(\nabla V_{\text{in}}-\nabla V_{\mu})dx&\leq C\|\nabla V_{\text{in}}-\nabla V_{\mu}\|_{L^2}\|\nabla V_{\text{out}}\|_{L^2}\\
&\leq Cd^{\frac{1}{2}}\left(d^{\frac{2}{\gamma_{0}}}+d^{\frac{2}{\gamma_1}}+d^{5\over3}\right)^{1\over2}.
\end{align*}
This proves \eqref{secondhighorder}.
\end{proof}
Combining \eqref{firsthighorder} and \eqref{secondhighorder}, we can now rewrite \eqref{H-rho-vHrhomu0} as
\begin{align}
H(\rho,v)-H(\rho_{\mu},0)\geq& d_1+d_3+d_5
+\bigg(d_2-d_4\bigg)-o(d).\label{H-rho-H-rho0}
\end{align}
Finally, we pay our attention to $d_2-d_4$. Indeed,
\begin{align}\nonumber
d_2-d_4
=&\int_{B_\mu}\left(\Phi(\rho)-\Phi(\rho_\mu)-\Phi'(\rho_\mu)(\rho-\rho_\mu)\right) dx-\frac{1}{8\pi}\int_{\mathbb{R}^3}|\nabla V_{\text{in}}-\nabla V_\mu|^2dx\\\nonumber
=&\int_{B_\mu}\left(\Phi(\rho)-\Phi(\rho_\mu)-\Phi'(\rho_\mu)(\rho-\rho_\mu)\right) dx+\frac{1}{8\pi}\int_{\mathbb{R}^3}|\nabla V_{\text{in}}-\nabla V_\mu|^2dx\\\label{tilde-I}
&\quad+\int_{B_\mu}(V_{\text{in}}-V_\mu)(\rho_{\text{in}}-\rho_\mu)dx.
\end{align}
Let $\tilde{V}_{\text{in}}=V_{\text{in}}-V_\mu$ and $\tilde{\rho}_{\text{in}}=\rho_{\text{in}}-\rho_\mu$.
\if0
Noting that $\Phi''(\rho_0)={P'(\rho_0)\over \rho_0}>0$ for $0\leq r<R$ and $\lim_{r\to R_0^-}\Phi''(\rho_0)=\infty$, there exists $\delta_0>0$ such that
$\Phi''(\rho_0)\geq \delta_0>0$ for $0\leq r\leq R_0$.
Thus,
\begin{align}\label{M(0)}
|M(0)-M_0|\leq \|\rho-\rho_0\|_{
\end{align}
\fi
Then by  our assumption that $\Phi(\rho)\geq C\rho^{\gamma_0}$ for $0<\rho<\varepsilon$, \eqref{out-c} and \eqref{rho-large}, we have
\begin{align}\nonumber
&\left|\int_{B_{\mu}}\tilde{\rho}_{\text{in}}dx\right|
=\left|\int_{\mathbb{R}^3}\left(\rho-\rho_{\mu}\right)dx-\int_{B_{\mu}^c}\rho_{\text{out}}dx\right|\\\nonumber
\leq& |M-M_{\mu}|+\|\rho_{\text{out}}^1\|_{L^1(\tilde B)}+\int_{ B(0,R_{\mu}+\delta)^c} \rho_{\text{out}}dx\\\nonumber
\leq&|M-M_{\mu}|+\|\rho_{\text{out}}^1\chi_1\|_{L^1(\tilde B)}
+\|\rho_{\text{out}}^1\chi_2\|_{L^1(\tilde B)}+Cd_5\\\nonumber
\leq&|M-M_{\mu}|+C\|\rho_{\text{out}}^1\chi_1\|_{L^{\gamma_{0}}(\tilde B)}
+C\|\rho_{\text{out}}^1\chi_2\|_{L^{\gamma_1}(\tilde B)}+Cd_5\\\nonumber
\leq &|M-M_{\mu}|+Cd_3^{1\over\gamma_{0}}
+Cd_3^{1\over\gamma_1}+Cd_5\\
\leq &|M-M_{\mu}|+C(d^{1\over\gamma_{0}}
+d^{1\over\gamma_1}+d),\label{tilde-rho-in}
\end{align}
where we still denote $M=\int_{\mathbb{R}^3}\rho dx$ in this section without causing confusion, and  $\tilde B=B_\mu^c\cap B(0,R_{\mu}+\delta)$.
We define
the projection $P$ by
\begin{align}\label{ProjectionP}
P\phi=P^\mu\phi:=\frac{\int_{B_{\mu}}\frac{\phi}{\Phi''(\rho_{\mu})}dx}{\int_{B_{\mu}}\frac{1}{\Phi''(\rho_{\mu})}dx}
\end{align}
for $\phi\in L^1_{1\over{\Phi''(\rho_{\mu})}}(B_{\mu})$.
Note that ${1\over\Phi''(\rho_{\mu})}$ is compacted supported in $B_{\mu}$ since $$\lim\limits_{r\to R_{\mu}^-}{1\over\Phi''(\rho_{\mu}(r))}=\lim\limits_{r\to R_{\mu}^-}{\rho_{\mu}(r)^{\gamma_{0}-1}\over P'(\rho_{\mu}(r))}\rho_{\mu}(r)^{2-\gamma_{0}}=0,$$ where we used \eqref{P2}.
Then
\begin{align}\nonumber
|P\tilde{V}_{\text{in}}|&=\left|\frac{\int_{B_{\mu}}\frac{\tilde{V}_{\text{in}}}{\Phi''(\rho_{\mu})}dx}{\int_{B_{\mu}}\frac{1}{\Phi''(\rho_{\mu})}dx}\right|
\leq \frac{\sup\limits_{x\in B_{\mu}}\frac{1}{\Phi''(\rho_{\mu})}}{\int_{B_{\mu}}\frac{1}{\Phi''(\rho_{\mu})}dx}\int_{B_{\mu}}|\tilde{V}_{\text{in}}|dx\leq C \|\tilde{V}_{\text{in}}\|_{L^1({B_{\mu}})}\\\label{P}
&\leq C \|\tilde{V}_{\text{in}}\|_{L^6({B_{\mu}})}\leq C \|\tilde{V}_{\text{in}}\|_{L^6(\mathbb{R}^3)}\leq C \|\nabla \tilde{V}_{\text{in}}\|_{L^2(\mathbb{R}^3)}\leq C d_4^{\frac{1}{2}}.
\end{align}
By \eqref{tilde-rho-in} and \eqref{P}, we have
\begin{align}\label{P-2}
\left|P\tilde{V}_{\text{in}}\int_{B_{\mu}}\tilde{\rho}_{\text{in}}dx\right|\leq C d^{\frac{1}{2}}\left(|M-M_{\mu}|+d^{1\over\gamma_{0}}
+d^{1\over\gamma_1}+d\right).
\end{align}
By \eqref{tilde-I} and \eqref{P-2}, we have
\begin{align}\label{d2-d4-estimate}
&d_2-d_4\\\nonumber
=& {1\over 8\pi}\int_{\mathbb{R}^3}|\nabla\tilde V_{\text{in}}|^2dx+\int_{B_{\mu}}\bigg(\left(\Phi(\tilde\rho_{\text{in}}+\rho_{\mu})-\Phi(\rho_{\mu})-\Phi'(\rho_{\mu})\tilde\rho_{\text{in}}\right)-(P\tilde{V}_{\text{in}}-\tilde{V}_{\text{in}})\tilde{\rho}_{\text{in}} \bigg)dx \\\nonumber &
 +P\tilde V_{\text{in}}\int_{B_{\mu}}
\tilde \rho_{\text{in}}dx\\\nonumber
\geq&{1\over 8\pi}\int_{\mathbb{R}^3}|\nabla\tilde V_{\text{in}}|^2dx+\int_{B_{\mu}}\bigg(\left(\Phi(\tilde\rho_{\text{in}}+\rho_{\mu})-\Phi(\rho_{\mu})-\Phi'(\rho_{\mu})\tilde\rho_{\text{in}}\right)-(P\tilde{V}_{\text{in}}-\tilde{V}_{\text{in}})\tilde{\rho}_{\text{in}} \bigg)dx \\\nonumber &
 -C d^{\frac{1}{2}}\left(|M-M_{\mu}|+d^{1\over\gamma_{0}}
+d^{1\over\gamma_1}+d\right).
\end{align}
\section{The dual functional and its regularity}

To deal with the term $$\int_{B_{\mu}}\left(\left(\Phi(\tilde\rho_{\text{in}}+\rho_{\mu})-\Phi(\rho_{\mu})-\Phi'(\rho_{\mu})\tilde\rho_{\text{in}}\right)-(P\tilde{V}_{\text{in}}-\tilde{V}_{\text{in}})\tilde{\rho}_{\text{in}} \right)dx$$ in \eqref{d2-d4-estimate},
we intend to study its Taylor expansion near $(\rho_\mu,0)$. However, this functional is not regular enough (i.e. $C^2$) to justify the Taylor expansion.  To overcome this difficulty,  we instead bound it from below by  its dual functional and study the Taylor
expansion of the dual functional, which is in $C^2$.

We denote \begin{align*}
\Psi_{\rho_{\mu}(r)}(\tau)=\Phi(\tau+\rho_{\mu}(r))-\Phi(\rho_{\mu}(r))-\Phi'(\rho_{\mu}(r))\tau \text{  and }H_{\rho_{\mu}(r),y}(\tau)=\Psi_{\rho_{\mu}(r)}(\tau)-y\tau
\end{align*}
for $\tau\geq -\rho_{\mu}(r)$,
where  $r\in[0,R_\mu]$ and
$y\in\mathbb{R}$.
Then the Legendre transformation of $\Psi_{\rho_{\mu}(r)}$ is given by
\begin{align*}
\Psi^*_{\rho_{\mu}(r)}(y)=\inf_{\tau\geq-\rho_{\mu}(r)}H_{\rho_{\mu}(r),y}(\tau),\quad y\in\mathbb{R}.
\end{align*}
Now, we study the regularity of $\Psi^*_{\rho_{\mu}(r)}$.
\begin{lemma}$\Psi^*_{\rho_{\mu}(r)} \in C^2(\mathbb{R})$ and
\begin{align}\nonumber
(\Psi^*_{\rho_{\mu}(r)})'(y)&=-(\Phi')_+^{-1}\left(y+\Phi'(\rho_{\mu}(r))\right)+\rho_{\mu}(r),\\\label{z(y)} (\Psi^*_{\rho_{\mu}(r)})''(y)&=-{1\over\Phi''\left((\Phi')_+^{-1}\left(y+\Phi'(\rho_{\mu}(r))\right)\right)},
\end{align}
for $y\in \mathbb{R}$, where $0\leq r\leq R_{\mu}$.
In particular,
\begin{align*}
\Psi^*_{\rho_{\mu}(r)}(0)=(\Psi^*_{\rho_{\mu}(r)})'(0)=0 \text{ and }(\Psi^*_{\rho_{\mu}(r)})''(0)=-{1\over \Phi''(\rho_{\mu}(r))},
\end{align*}
where $0\leq r\leq R_{\mu}.$
\end{lemma}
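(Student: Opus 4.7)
The plan is to compute $\Psi^*_{\rho_{\mu}(r)}$ by explicitly locating the minimizer of the convex function $H_{\rho_{\mu}(r),y}(\tau)$ and then applying the envelope theorem twice. Since $\Phi''(\rho)=P'(\rho)/\rho>0$, the map $\tau\mapsto \Psi_{\rho_{\mu}(r)}(\tau)=\Phi(\tau+\rho_{\mu}(r))-\Phi(\rho_{\mu}(r))-\Phi'(\rho_{\mu}(r))\tau$ is strictly convex on $[-\rho_{\mu}(r),\infty)$ with $\Psi_{\rho_{\mu}(r)}(0)=\Psi'_{\rho_{\mu}(r)}(0)=0$ and $\Psi_{\rho_{\mu}(r)}\ge 0$. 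Hence $H_{\rho_{\mu}(r),y}$ is strictly convex in $\tau$ for every fixed $y$, and its infimum is attained at a unique point $\tau^*(y)$. The first-order condition $\partial_\tau H_{\rho_{\mu}(r),y}=0$ reads $\Phi'(\tau+\rho_{\mu}(r))=y+\Phi'(\rho_{\mu}(r))$.

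First I would split according to the sign of $y+\Phi'(\rho_{\mu}(r))$. When $y+\Phi'(\rho_{\mu}(r))\ge 0$, the increasing function $\Phi':[0,\infty)\to[0,\infty)$ is invertible and the interior critical point $\tau^*(y)=(\Phi')^{-1}(y+\Phi'(\rho_{\mu}(r)))-\rho_{\mu}(r)\ge-\rho_{\mu}(r)$ realizes the minimum. When $y+\Phi'(\rho_{\mu}(r))<0$, the derivative $\partial_\tau H_{\rho_{\mu}(r),y}$ is strictly positive on the whole interval, so the infimum is attained at the boundary $\tau^*(y)=-\rho_{\mu}(r)$; here $\Psi^*_{\rho_{\mu}(r)}(y)$ is affine in $y$. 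Both cases are united by the convention $(\Phi')_+^{-1}(s)=(\Phi')^{-1}(s)$ for $s\ge 0$ and $(\Phi')_+^{-1}(s)=0$ for $s<0$, giving $\tau^*(y)=(\Phi')_+^{-1}(y+\Phi'(\rho_{\mu}(r)))-\rho_{\mu}(r)$. The envelope identity $(\Psi^*_{\rho_{\mu}(r)})'(y)=\partial_y H_{\rho_{\mu}(r),y}(\tau^*(y))=-\tau^*(y)$ then yields the first formula, and the inverse function theorem applied on $\{y+\Phi'(\rho_{\mu}(r))>0\}$ gives $(\Psi^*_{\rho_{\mu}(r)})''(y)=-1/\Phi''((\Phi')_+^{-1}(y+\Phi'(\rho_{\mu}(r))))$, while on the affine region both sides vanish.

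The main obstacle, and the only non-routine point, is verifying $C^2$ regularity across the transition value $y=-\Phi'(\rho_{\mu}(r))$ (and, when $r=R_{\mu}$ so that $\rho_{\mu}(R_{\mu})=0$, across $y=0$). This is where the hypothesis $\gamma_0\in(6/5,2)$ enters decisively: from \eqref{P2} one has $\Phi''(\rho)\sim K_0\rho^{\gamma_0-2}\to\infty$ as $\rho\to 0^+$, so $1/\Phi''((\Phi')_+^{-1}(s))\to 0$ as $s\to 0^+$. Thus the expression for $(\Psi^*_{\rho_{\mu}(r)})''$ on the interior side extends continuously by the value $0$ to the transition point, matching the value $0$ from the affine side; together with the obvious continuity of $\tau^*(y)$, this gives $\Psi^*_{\rho_{\mu}(r)}\in C^2(\mathbb{R})$. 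Finally, evaluating at $y=0$ one has $(\Phi')^{-1}(\Phi'(\rho_{\mu}(r)))=\rho_{\mu}(r)$, hence $\tau^*(0)=0$, which yields $\Psi^*_{\rho_{\mu}(r)}(0)=\Psi_{\rho_{\mu}(r)}(0)=0$, $(\Psi^*_{\rho_{\mu}(r)})'(0)=0$, and $(\Psi^*_{\rho_{\mu}(r)})''(0)=-1/\Phi''(\rho_{\mu}(r))$, completing the claim.
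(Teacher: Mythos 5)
Your proposal is correct and follows essentially the same route as the paper: identify the minimizer $\tau^*(y)$ of the strictly convex $H_{\rho_{\mu}(r),y}$ by the first-order condition, split on the sign of $y+\Phi'(\rho_{\mu}(r))$, obtain $(\Psi^*_{\rho_{\mu}(r)})'=-\tau^*$ (the paper carries out the cancellation by hand rather than citing the envelope theorem, but it is the same computation), and verify $C^2$ regularity across the transition point by using $\Phi''(\rho)\to\infty$ as $\rho\to0^+$, which is exactly where $\gamma_0<2$ is used. No gap.
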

\begin{proof}
Direct computation gives
$$H_{\rho_{\mu}(r),y}'(\tau)=\Phi'(\tau+\rho_{\mu}(r))-\Phi'(\rho_{\mu}(r))-y,$$
and
$$H_{\rho_{\mu}(r),y}''(\tau)=\Phi''(\tau+\rho_{\mu}(r))={P'(\tau+\rho_{\mu}(r))\over \tau+\rho_{\mu}(r)}>0,$$
for $\tau\geq -\rho_{\mu}(r).$
In particular,
$$H_{\rho_{\mu}(r),y}'(-\rho_{\mu}(r))=\Phi'(0)-\Phi'(\rho_{\mu}(r))-y=-\Phi'(\rho_{\mu}(r))-y.$$
Thus,  if $y> -\Phi'(\rho_{\mu}(r))$, then $H_{\rho_{\mu}(r),y}'(-\rho_{\mu}(r))<0$ and there exists a unique $z(y)=z_{\rho_{\mu}(r)}(y)\in(-\rho_{\mu}(r),\infty)$ such that
\begin{align*}
H_{\rho_{\mu}(r),y}'(z(y))&=\Phi'(z(y)+\rho_{\mu}(r))-\Phi'(\rho_{\mu}(r))-y=0.
\end{align*}
This implies that
\begin{align}
\Psi^*_{\rho_{\mu}(r)}(y)&=H_{\rho_{\mu}(r),y}(z(y)).\label{H-rho-0(r)y(x)}
\end{align}
Here, $z(y)=z_{\rho_{\mu}(r)}(y)$ is in fact dependent on $\rho_{\mu}(r)$, and  below we always use $z(y)$ to avoid tedious notation without confusion.
In this case, since $\Phi''(z(y)+\rho_{\mu}(r))>0$ for $y> -\Phi'(\rho_{\mu}(r))$,  by the Implicit Function Theorem we have $ z\in C^1(-\Phi'(\rho_{\mu}(r),\infty)$ and
\begin{align}\label{z(y)+}
z(y)=(\Phi')^{-1}\left(y+\Phi'(\rho_{\mu}(r))\right)-\rho_{\mu}(r) \text{ and } z'(y)={1\over\Phi''\left(z(y)+\rho_{\mu}(r)\right)}.
\end{align}
If $y\leq -\Phi'(\rho_{\mu}(r))$, then $H_{\rho_{\mu}(r),y}'(-\rho_{\mu}(r))\geq0$ and
\begin{align}\label{z(y)-}z(y)=-\rho_{\mu}(r)\end{align}
 is the unique point in $[-\rho_{\mu}(r),\infty)$ such that
\begin{align}\label{Psi*-rho-0(r)-}
\Psi^*_{\rho_{\mu}(r)}(y)=H_{\rho_{\mu}(r),y}(z(y))=-\Phi(\rho_{\mu}(r))+\Phi'(\rho_{\mu}(r))\rho_{\mu}(r)+y\rho_{\mu}(r).
\end{align}
Then $z'(y)\equiv0$ for $y\in(-\infty,-\Phi'(\rho_{\mu}(r)]$.  To prove that $z\in C^1(\mathbb{R})$, it suffices to show that $z$ is differentiable at $y=-\Phi'(\rho_{\mu}(r))$. In fact, by \eqref{z(y)+} we have
$$\lim_{y\to-\Phi'(\rho_{\mu}(r))^+} z(y)=-\rho_{\mu}(r)=z(-\Phi'(\rho_{\mu}(r))),$$
and
\begin{align*}
\lim_{y\to-\Phi'(\rho_{\mu}(r))^+}z'(y)=\lim_{y\to-\Phi'(\rho_{\mu}(r))^+}{1\over\Phi''\left(z(y)+\rho_{\mu}(r)\right)}=0.
\end{align*}
Then we compute $(\Psi^*_{\rho_{\mu}(r)})'(y)$ and $(\Psi^*_{\rho_{\mu}(r)})''(y)$. For $y> -\Phi'(\rho_{\mu}(r))$, by
\eqref{H-rho-0(r)y(x)} and \eqref{z(y)+} we have
\begin{align}\nonumber
&(\Psi^*_{\rho_{\mu}(r)})'(y)=\partial_y(H_{\rho_{\mu}(r),y}(z(y)))\\\nonumber
=&\partial_y\left(\Phi(z(y)+\rho_{\mu}(r))-\Phi(\rho_{\mu}(r))-\Phi'(\rho_{\mu}(r))z(y)-yz(y)\right)\\\nonumber
=&\Phi'(z(y)+\rho_{\mu}(r))z'(y)-\Phi'(\rho_{\mu}(r))z'(y)-z(y)-yz'(y)\\\nonumber
=&\Phi'(z(y)+\rho_{\mu}(r))z'(y)-\Phi'(\rho_{\mu}(r))z'(y)-z(y)-\left(\Phi'(z(y)+\rho_{\mu}(r))-\Phi'(\rho_{\mu}(r))\right)z'(y)\\\nonumber
=&-z(y)=-(\Phi')^{-1}\left(y+\Phi'(\rho_{\mu}(r))\right)+\rho_{\mu}(r),
\end{align}
and
\begin{align}\label{Psi*derivative+}
&(\Psi^*_{\rho_{\mu}(r)})''(y)=-z'(y)=-{1\over\Phi''\left(z(y)+\rho_{\mu}(r)\right)}.
\end{align}
For $y\leq -\Phi'(\rho_{\mu}(r))$, by \eqref{Psi*-rho-0(r)-} and \eqref{z(y)-} we also have
\begin{align*}
(\Psi^*_{\rho_{\mu}(r)})'(y)&=\partial_y(-\Phi(\rho_{\mu}(r))+\Phi'(\rho_{\mu}(r))\rho_{\mu}(r)+y\rho_{\mu}(r))\\
&=\rho_{\mu}(r)=-z(y)=\rho_{\mu}(r),
\end{align*}
and
\begin{align}\label{Psi*derivative-}
(\Psi^*_{\rho_{\mu}(r)})''(y)&=-z'(y)=0.
\end{align}
If $0\leq r<R_{\mu}$, by \eqref{Psi*derivative+} we have 
$$(\Psi^*_{\rho_{\mu}(r)})'(0)=-z(0)=0, (\Psi^*_{\rho_{\mu}(r)})''(0)=-z'(0)=-{1\over\Phi''\left(\rho_{\mu}(r)\right)}.$$
In this case, $\Psi^*_{\rho_{\mu}(r)}(0)=0$ by \eqref{H-rho-0(r)y(x)}. If $r=R_{\mu}$, by \eqref{Psi*derivative-}, \eqref{Psi*-rho-0(r)-} and $z\in C^1(\mathbb{R})$ we have  $(\Psi^*_{\rho_{\mu}(r)})'(0)=-z(0)=\rho_{\mu}(R_{\mu})=0$, $(\Psi^*_{\rho_{\mu}(r)})''(0)=-z'(0)=0$ and $\Psi^*_{\rho_{\mu}(r)}(0)=0$. Thus,
\begin{align*}
\Psi^*_{\rho_{\mu}(r)}(0)=(\Psi^*_{\rho_{\mu}(r)})'(0)=0 \text{ and }(\Psi^*_{\rho_{\mu}(r)})''(0)=-{1\over \Phi''(\rho_{\mu}(r))}
\end{align*}
for $0\leq r\leq R_{\mu}.$

We extend $(\Phi')^{-1}$ to $s\in(-\infty,0)$ by zero function and denote the extended function by $(\Phi')^{-1}_+:\mathbb{R}\to[0,\infty)$. Then $(\Phi')^{-1}_+\in C^1(\mathbb{R})$ and $(\Phi')^{-1}_+$ is increasing on $[0,\infty)$. By \eqref{Psi*derivative+} and \eqref{Psi*derivative-}, we have for $r\in[0,R_{\mu}]$ and $y\in\mathbb{R}$,
\begin{align}\nonumber
(\Psi^*_{\rho_{\mu}(r)})'(y)&=-z(y)=-(\Phi')_+^{-1}\left(y+\Phi'(\rho_{\mu}(r))\right)+\rho_{\mu}(r),\\\nonumber (\Psi^*_{\rho_{\mu}(r)})''(y)&=-z'(y)=-{1\over\Phi''\left(z(y)+\rho_{\mu}(r)\right)}=-{1\over\Phi''\left((\Phi')_+^{-1}\left(y+\Phi'(\rho_{\mu}(r))\right)\right)}.
\end{align}
Since $z\in C^1(\mathbb{R})$, we have $\Psi^*_{\rho_{\mu}(r)}\in C^2(\mathbb{R})$ for $r\in[0,R_{\mu}]$.
\end{proof}

 We define
the functionals $A$ and $B$ by
\begin{align*}
A(\phi)=A^\mu(\phi):=\int_{B_{\mu}}\Psi^*_{\rho_{\mu}}(P\phi-\phi) dx,\quad\phi\in \dot{H}^1(\mathbb{R}^3),
\end{align*}
and
\begin{align}\label{dual functional expression}
B(\phi)=B^\mu(\phi):={1\over 8\pi}\int_{\mathbb{R}^3}|\nabla\phi|^2dx+\int_{B_{\mu}}\Psi_{\rho_{\mu}}^*(P\phi-\phi) dx,\quad \phi\in \dot{H}^1(\mathbb{R}^3),
\end{align}
 where $P$ is given in \eqref{ProjectionP}. Then we study the regularity of $A$ and $B$.
\begin{lemma}\label{c2dual functional}
 $A\in C^2(\dot{H}^1(\mathbb{R}^3))$. Consequently, $B\in C^2(\dot{H}^1(\mathbb{R}^3))$,  $B(0)=0, B'(0)=0,$ and
\begin{align*}
\;\;\langle B''(0)\phi,\phi\rangle
=&{1\over4\pi}\int_{\mathbb{R}^3}|\nabla\phi|^2dx-\int_{B_{\mu}}{1\over \Phi''(\rho_{\mu})}(P\phi-\phi)^2 dx,\quad \phi\in\dot{H}^1(\mathbb{R}^3).
\end{align*}
\end{lemma}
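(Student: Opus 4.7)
The plan is to reduce the whole claim to the $C^2$ regularity of the functional $A$ on $\dot H^1(\mathbb{R}^3)$, since the quadratic Dirichlet piece $\phi\mapsto \frac{1}{8\pi}\int_{\mathbb R^3}|\nabla\phi|^2\,dx$ in \eqref{dual functional expression} is manifestly $C^2$ with second Fréchet derivative $\langle\psi,\eta\rangle\mapsto \frac{1}{4\pi}\int_{\mathbb R^3}\nabla\psi\cdot\nabla\eta\,dx$. Given the $C^2$-smoothness of $A$, adding the two pieces and evaluating at $\phi=0$ will give the stated formula for $B''(0)$, because the previous lemma computed $\Psi^*_{\rho_\mu(r)}(0)=(\Psi^*_{\rho_\mu(r)})'(0)=0$ and $(\Psi^*_{\rho_\mu(r)})''(0)=-1/\Phi''(\rho_\mu(r))$.

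First I would record quantitative growth bounds on $\Psi^*_{\rho_\mu(r)}$ uniform in $r\in[0,R_\mu]$. From \eqref{z(y)} together with $\Phi''(\rho)=P'(\rho)/\rho$ and the asymptotics \eqref{P2}--\eqref{P3}, one gets $(\Phi')_+^{-1}(s)\le C(1+s^{1/(\gamma_1-1)})$ and $\Phi''\bigl((\Phi')_+^{-1}(s)\bigr)\ge c(1+s)^{(\gamma_1-2)/(\gamma_1-1)}$ for $s\ge 0$. Since $|\Phi'(\rho_\mu(r))|$ is bounded on $B_\mu$, these translate into
\begin{equation*}
\bigl|(\Psi^*_{\rho_\mu(r)})'(y)\bigr|\le C\bigl(1+|y|^{1/(\gamma_1-1)}\bigr),\qquad \bigl|(\Psi^*_{\rho_\mu(r)})''(y)\bigr|\le C\bigl(1+|y|^{(2-\gamma_1)/(\gamma_1-1)}\bigr),
\end{equation*}
and hence $|\Psi^*_{\rho_\mu(r)}(y)|\le C(|y|^2+|y|^{\gamma_1/(\gamma_1-1)})$. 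The key quantitative input is the assumption $\gamma_1\in(6/5,2)$, which forces $\gamma_1/(\gamma_1-1)\in(2,6)$ and $(2-\gamma_1)/(\gamma_1-1)\in(0,4)$, matching exactly the Sobolev embedding $\dot H^1(\mathbb{R}^3)\hookrightarrow L^6(\mathbb{R}^3)$.

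Next I would show that for $\phi\in\dot H^1(\mathbb R^3)$ we have $P\phi-\phi\in L^6(B_\mu)$ with norm controlled by $\|\nabla\phi\|_{L^2}$: estimate \eqref{P} already handles the constant $P\phi$, and the Sobolev embedding handles $\phi|_{B_\mu}$. Combining with the growth bound on $\Psi^*$ yields finiteness of $A(\phi)$. Then the candidate derivatives
\begin{align*}
\langle A'(\phi),\psi\rangle&=\int_{B_\mu}(\Psi^*_{\rho_\mu})'(P\phi-\phi)(P\psi-\psi)\,dx,\\
\langle A''(\phi)\psi,\eta\rangle&=\int_{B_\mu}(\Psi^*_{\rho_\mu})''(P\phi-\phi)(P\psi-\psi)(P\eta-\eta)\,dx
\end{align*}
are shown to define bounded linear, bilinear, and trilinear forms by Hölder's inequality against $L^6(B_\mu)$ combined with the growth bounds above. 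Fréchet differentiability is then obtained from the integral mean-value theorem applied inside the integrand together with dominated convergence, using the continuous $L^{6/5}$-type majorants built from the same growth bounds and the compactness of $B_\mu$. Continuity of $A'$ and $A''$ as maps into the corresponding dual/bilinear spaces follows from the same majorants and the continuity of $(\Psi^*)'$, $(\Psi^*)''$ (which was established in the preceding lemma). Evaluating at $\phi=0$, where $P\phi-\phi=0$ and $(\Psi^*_{\rho_\mu(r)})''(0)=-1/\Phi''(\rho_\mu(r))$, produces $B(0)=0$, $B'(0)=0$, and the asserted expression for $\langle B''(0)\phi,\phi\rangle$.

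The main obstacle will be the uniform control of $(\Psi^*_{\rho_\mu(r)})''$ near the boundary $r=R_\mu$, where $\rho_\mu(r)\to 0$ and the transition point $y=-\Phi'(\rho_\mu(r))$ of the piecewise formula for $z(y)$ collapses to the origin. I would handle this by treating the two branches separately, using the $\gamma_0\in(6/5,2)$ behavior in \eqref{P2} to show $1/\Phi''(\rho_\mu(r))\to 0$ as $r\to R_\mu^-$ (as already used in \eqref{P}), which keeps the constants in the growth bounds uniform in $r$ and preserves the applicability of dominated convergence across the transition.
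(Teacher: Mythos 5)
Your plan matches the paper's overall strategy: reduce to $A\in C^2(\dot H^1)$, establish growth bounds on $(\Psi^*_{\rho_\mu})'$ and $(\Psi^*_{\rho_\mu})''$ from \eqref{P2}--\eqref{P3} with exponents $1/(\gamma_1-1)$ and $(2-\gamma_1)/(\gamma_1-1)$, push the Sobolev embedding $\dot H^1\hookrightarrow L^6$ through H\"older against those bounds, and read off the formula at $\phi=0$ from the preceding lemma. Your exponent bookkeeping is correct and is exactly what the paper uses, e.g.\ its \eqref{2derivativePhi-1derivativePhi-1} and the observations that $\tfrac{6}{5(\gamma_1-1)}\in(\tfrac{6}{5},6)$ and $\tfrac{3(2-\gamma_1)}{2(\gamma_1-1)}\in(0,6)$. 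However, you misidentify the main obstacle. The degeneration of $1/\Phi''(\rho_\mu(r))$ as $r\to R_\mu^-$ is benign --- the paper disposes of it in one line, since $1/\Phi''(\rho_\mu(r))\to 0$ keeps everything bounded. The genuine difficulty, which the paper states explicitly, is that $\Phi$ is assumed only $C^2$ (or $C^4$ on $(0,\infty)$ but no Lipschitz control at $0$), so $(\Psi^*_{\rho_\mu})''$ is merely continuous; one therefore cannot get a Lipschitz-type bound on $A''(\phi_n)-A''(\phi_0)$ the way one can for $A'$ via the mean-value theorem and $(\Psi^*)''$ (compare the paper's estimate \eqref{A-C1}). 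Your sentence ``continuity of $A''$ follows from the same majorants and the continuity of $(\Psi^*)''$'' compresses exactly this hard step without an argument. The paper proves continuity of $A''$ ``by definition,'' splitting $B_\mu$ via Lusin's theorem into a region where $\phi_0$ is continuous (hence $(\Psi^*)''(\tilde\phi_n)\to(\Psi^*)''(\tilde\phi_0)$ uniformly after a pointwise $\delta$-threshold) and a remainder of small measure controlled by the $L^6$-growth bound. Your dominated-convergence alternative can be made rigorous and is arguably cleaner, but it needs two ingredients you omit: pass to an a.e.-convergent subsequence of $\phi_n$ (recall $\dot H^1$ convergence gives only $L^6(B_\mu)$ convergence, not pointwise) and invoke a generalized/Pratt-type dominated convergence theorem, since the natural majorants $C\bigl(1+|P\phi_n-\phi_n|^{(2-\gamma_1)/(\gamma_1-1)}\bigr)$ depend on $n$ and only converge in $L^{3/2}(B_\mu)$ rather than being a single fixed dominant. (Equivalently, appeal to Krasnoselskii's continuity theorem for Nemytskii operators.) Once that step is filled in, your proposal is a valid alternative to the Lusin's-theorem argument.
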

\begin{proof} For $\phi_{0} \in\dot{H}^1(\mathbb{R}^3)$ and $\psi\in\dot{H}^1(\mathbb{R}^3)$, we have
\begin{align*}
\partial_{\lambda}A(\phi_{0}+\lambda\psi)|_{\lambda=0}&=\int_{B_{\mu}}(\Psi_{\rho_{\mu}}^*)'(P\phi_{0}-\phi_{0})(P\psi-\psi)dx\\
&=\int_{B_{\mu}}\left(-(\Phi')_+^{-1}\left(P\phi_{0}-\phi_{0}+\Phi'(\rho_{\mu})\right)+\rho_{\mu}\right)(P\psi-\psi)dx.
\end{align*}
Since $P'(\rho)>C\rho^{\gamma_1-1}$ for $\rho>N$, we have
$$\Phi'(\rho)=\int_{\mu}^N{P'(s)\over s}ds +\int_N^\rho{P'(s)\over s}ds\geq C(\rho^{\gamma_1-1}-N^{\gamma_1-1}).$$
Thus, we have $(\Phi')_+^{-1}(y)\leq Cy^{1\over \gamma_1-1}$ for $y>N_1$, where $N_1>0$ is a constant large enough. Then  by  the fact that  $\rho_{\mu}\in C^1(B_{\mu})$, we have
\begin{align*}
&|\partial_{\lambda}A(\phi_{0}+\lambda\psi)|_{\lambda=0}|\\
\leq&\int_{B_{\mu}}\left|-(\Phi')_+^{-1}\left(P\phi_{0}-\phi_{0}+\Phi'(\rho_{\mu})\right)+\rho_{\mu}\right||P\psi-\psi|dx\\
\leq&\int_{B_{\mu}\cap\{x|P\phi_{0}-\phi_{0}+\Phi'(\rho_{\mu})\leq N_1\}}|(\Phi')_+^{-1}\left(P\phi_{0}-\phi_{0}+\Phi'(\rho_{\mu})\right)||P\psi-\psi|dx\\
&+\int_{B_{\mu}\cap\{x|P\phi_{0}-\phi_{0}+\Phi'(\rho_{\mu})>N_1\}}|(\Phi')_+^{-1}\left(P\phi_{0}-\phi_{0}+\Phi'(\rho_{\mu})\right)||P\psi-\psi|dx\\
&+C\int_{B_{\mu}}|P\psi-\psi|dx\\
=&II_1+II_2+II_3.
\end{align*}
Since  $(\Phi')^{-1}_+$ is increasing on $[0,\infty)$, by \eqref{P} we have
\begin{align*}
II_1+II_3\leq&|(\Phi')_+^{-1}\left(N_1\right)|\int_{B_{\mu}\cap\{x|P\phi_{0}-\phi_{0}+\Phi'(\rho_{\mu})\leq N_1\}}|P\psi-\psi|dx
+C\int_{B_{\mu}}|P\psi-\psi|dx\\
\leq&C\|P\psi-\psi\|_{L^6(B_{\mu})}\leq C(|P\psi|+\|\psi\|_{L^6(\mathbb{R}^3)})\leq C\|\nabla\psi\|_{L^2(\mathbb{R}^3)}.
\end{align*}
Since $(\Phi')_+^{-1}(y)\leq Cy^{1\over \gamma_1-1}$ for $y>N_1$, we have
\begin{align*}
II_2\leq& C\int_{B_{\mu}\cap\{x|P\phi_{0}-\phi_{0}+\Phi'(\rho_{\mu})>N_1\}}\left(P\phi_{0}-\phi_{0}+\Phi'(\rho_{\mu})\right)^{1\over \gamma_1-1}|P\psi-\psi|dx\\
\leq& \int_{B_{\mu}\cap\{x|P\phi_{0}-\phi_{0}+\Phi'(\rho_{\mu})>N_1\}}C\left|P\phi_{0}-\phi_{0}\right|^{1\over \gamma_1-1}|P\psi-\psi|+C\Phi'(\rho_{\mu})^{1\over \gamma_1-1}|P\psi-\psi|dx\\
\leq&C\left(\int_{B_{\mu}\cap\{x|P\phi_{0}-\phi_{0}+\Phi'(\rho_{\mu})>N_1\}}\left|P\phi_{0}-\phi_{0}\right|^{{6\over 5(\gamma_1-1)}}dx\right)^ {5\over6}\|P\psi-\psi\|_{L^6(B_{\mu})}\\
&+C\|P\psi-\psi\|_{L^1(B_{\mu})}\\
\leq&C\|P\phi_{0}-\phi_{0}\|_{L^{{6\over 5(\gamma_1-1)}}{(B_{\mu})}}^{1\over \gamma_1-1}\|P\psi-\psi\|_{L^6(B_{\mu})}+C\|P\psi-\psi\|_{L^1(B_{\mu})}\\
\leq&C\|P\phi_{0}-\phi_{0}\|_{L^6{(B_{\mu})}}^{1\over \gamma_1-1}\|P\psi-\psi\|_{L^6(B_{\mu})}+C\|P\psi-\psi\|_{L^6(B_{\mu})}\\
\leq&C(|P\phi_{0}|+\|\phi_{0}\|_{L^6{(B_{\mu})}})^{1\over \gamma_1-1}(|P\psi|+\|\psi\|_{L^6(B_{\mu})})+C(|P\psi|+\|\psi\|_{L^6(B_{\mu})})\\
\leq&C\|\nabla\phi_{0}\|_{L^2{(\mathbb{R}^3)}}^{1\over \gamma_1-1}\|\nabla\psi\|_{L^2(\mathbb{R}^3)}+C\|\nabla\psi\|_{L^2(\mathbb{R}^3)},
\end{align*}
where we use the fact that ${6\over 5(\gamma_1-1)}\in({6\over5},6)$.
Thus,
\begin{align*}
|\partial_{\lambda}A(\phi_{0}+\lambda\psi)|_{\lambda=0}|\leq II_1+II_2+II_3\leq C\|\nabla\phi_{0}\|_{L^2{(\mathbb{R}^3)}}^{1\over \gamma_1-1}\|\nabla\psi\|_{L^2(\mathbb{R}^3)}+C\|\nabla\psi\|_{L^2(\mathbb{R}^3)}.
\end{align*}
This implies that $A$ is G$\hat{\text{a}}$teaux differentiable at $\phi_{0}\in \dot{H}^1(\mathbb{R}^3)$.
To show that $A\in C^1(\dot{H}^1(\mathbb{R}^3))$, we choose $\{\phi_n\}\subset\dot{H}^1(\mathbb{R}^3)$  such that $\phi_n\to\phi_{0}$ in $\dot{H}^1(\mathbb{R}^3)$, and prove that
 \begin{align}\label{derivative continuous}
\partial_{\lambda}A(\phi_n+\lambda\psi)|_{\lambda=0}\to\partial_{\lambda}A(\phi_{0}+\lambda\psi)|_{\lambda=0}\quad{\text{as}}\quad n\to\infty.
\end{align}
In fact, by \eqref{z(y)} we have
\begin{align*}
&|\partial_{\lambda}A(\phi_n+\lambda\psi)|_{\lambda=0}-\partial_{\lambda}A(\phi_{0}+\lambda\psi)|_{\lambda=0}|\\
\leq&\int_{B_{\mu}}\left|(\Psi^*_{\rho_{\mu}})'(P\phi_n-\phi_n)-(\Psi^*_{\rho_{\mu}})'(P\phi_{0}-\phi_{0})\right||P\psi-\psi|dx\\
=&\int_{B_{\mu}}\left|(\Psi^*_{\rho_{\mu}})''(\varphi_n)(P(\phi_n-\phi_{0})-(\phi_n-\phi_{0}))\right||P\psi-\psi|dx\\
=&\int_{B_{\mu}}{1\over\Phi''\left((\Phi')_+^{-1}\left(\varphi_n+\Phi'(\rho_{\mu})\right)\right)}|P(\phi_n-\phi_{0})-(\phi_n-\phi_{0})||P\psi-\psi|dx,
\end{align*}
where there exists $s_n(x)\in(0,1)$ such that  
$$\varphi_n(x)=s_n(x)(P\phi_n-\phi_n(x))+(1-s_n(x))(P\phi_{0}-\phi_{0}(x))$$
 for fixed $x\in B_{\mu}$ and $n\in\mathbb{Z}^+$.
Since $\Phi''(s)={P'(s)\over s}>Cs^{\gamma_1-2}$ for $s>N$, we have $(\Phi')_+^{-1}(y)\to\infty$ as $y\to\infty$, and ${1\over \Phi''(s)}<Cs^{2-\gamma_1}$ for $s>N$. Since $(\Phi')_+^{-1}$ is increasing on $[0,\infty)$, there exists $N_2>N_1$ such that $(\Phi')_+^{-1}(y)>N$ for $y>N_2$. Noting that  $(\Phi')_+^{-1}(y)\leq Cy^{1\over \gamma_1-1}$ for $y>N_1$, we have for $y>N_2$,
\begin{align}\label{2derivativePhi-1derivativePhi-1}
{1\over\Phi''((\Phi')_+^{-1}(y))}\leq C\left((\Phi')_+^{-1}(y)\right)^{2-\gamma_1}\leq Cy^{{2-\gamma_1\over \gamma_1-1}}.
\end{align}
Thus,
\begin{align*}
&|\partial_{\lambda}A(\phi_n+\lambda\psi)|_{\lambda=0}-\partial_{\lambda}A(\phi_{0}+\lambda\psi)|_{\lambda=0}|\\
\leq&\int_{B_{\mu}\cap\{x|\varphi_n+\Phi'(\rho_{\mu})\leq N_2\}}{1\over\Phi''\left((\Phi')_+^{-1}\left(\varphi_n+\Phi'(\rho_{\mu})\right)\right)}|P(\phi_n-\phi_{0})-(\phi_n-\phi_{0})||P\psi-\psi|dx\\
&+\int_{B_{\mu}\cap\{x|\varphi_n+\Phi'(\rho_{\mu})> N_2\}}{1\over\Phi''\left((\Phi')_+^{-1}\left(\varphi_n+\Phi'(\rho_{\mu})\right)\right)}|P(\phi_n-\phi_{0})-(\phi_n-\phi_{0})||P\psi-\psi|dx\\
\leq &\sup_{y\leq N_2}{1\over\Phi''\left((\Phi')_+^{-1}\left(y\right)\right)}\int_{B_{\mu}\cap\{x|\varphi_n+\Phi'(\rho_{\mu})\leq N_2\}}|P(\phi_n-\phi_{0})-(\phi_n-\phi_{0})||P\psi-\psi|dx\\
&+C\int_{B_{\mu}\cap\{x|\varphi_n+\Phi'(\rho_{\mu})> N_2\}}\left(\varphi_n+\Phi'(\rho_{\mu})\right)^{{2-\gamma_1\over \gamma_1-1}}|P(\phi_n-\phi_{0})-(\phi_n-\phi_{0})||P\psi-\psi|dx\\
\leq& C\|P(\phi_n-\phi_{0})-(\phi_n-\phi_{0})\|_{L^2(B_{\mu})}\|P\psi-\psi\|_{L^2(B_{\mu})}\\
&+C\left(\int_{B_{\mu}}\left|\varphi_n+\Phi'(\rho_{\mu})\right|^{{3(2-\gamma_1)\over2( \gamma_1-1)}}dx\right)^{2\over3}
\|P(\phi_n-\phi_{0})-(\phi_n-\phi_{0})\|_{L^6(B_{\mu})}\|P\psi-\psi\|_{L^6(B_{\mu})}\\
\leq& C(|P(\phi_n-\phi_{0})|+\|\phi_n-\phi_{0}\|_{L^2(B_{\mu})})(|P\psi|+\|\psi\|_{L^2(B_{\mu})})\\
&+C\|\varphi_n+\Phi'(\rho_{\mu})\|_{L^{{3(2-\gamma_1)\over2( \gamma_1-1)}}(B_{\mu})}^{{2-\gamma_1\over \gamma_1-1}}
(|P(\phi_n-\phi_{0})|+\|\phi_n-\phi_{0}\|_{L^6(B_{\mu})})(|P\psi|+\|\psi\|_{L^6(B_{\mu})})\\
\leq& C\|\nabla(\phi_n-\phi_{0})\|_{L^2(\mathbb{R}^3)}\|\nabla\psi\|_{L^2(\mathbb{R}^3)}
+C\|\varphi_n+\Phi'(\rho_{\mu})\|_{L^{6}(B_{\mu})}^{{2-\gamma_1\over \gamma_1-1}}
\|\nabla(\phi_n-\phi_{0})\|_{L^2(\mathbb{R}^3)}\|\nabla\psi\|_{L^2(\mathbb{R}^3)},
\end{align*}
where we use the fact that ${3(2-\gamma_1)\over2( \gamma_1-1)}\in(0,6)$.
Since $\phi_n\to\phi_{0}$ in $\dot{H}^1(\mathbb{R}^3)$, we have $\|\nabla\phi_n\|_{L^2(\mathbb{R}^3)}\leq C_{\|\nabla\phi_{0}\|_{L^2(\mathbb{R}^3)}}$ for $n$ large enough. Then
\begin{align*}
\|\varphi_n+\Phi'(\rho_{\mu})\|_{L^{6}(B_{\mu})}=&\|s_n(P\phi_n-\phi_n)+(1-s_n)(P\phi_{0}-\phi_{0})+\Phi'(\rho_{\mu})\|_{L^{6}(B_{\mu})}\\
\leq&|P\phi_n|+\|\phi_n(x)\|_{L^{6}(B_{\mu})}+|P\phi_{0}|+\|\phi_{0}(x)\|_{L^{6}(B_{\mu})}+C\\
\leq&C\|\nabla\phi_n\|_{L^2(\mathbb{R}^3)}+C\|\nabla\phi_{0}\|_{L^2(\mathbb{R}^3)}+C\leq C_{\|\nabla\phi_{0}\|_{L^2(\mathbb{R}^3)}}.
\end{align*}
Thus,
\begin{align}\nonumber
&|\partial_{\lambda}A(\phi_n+\lambda\psi)|_{\lambda=0}-\partial_{\lambda}A(\phi_{0}+\lambda\psi)|_{\lambda=0}|\\
\leq& C_{\|\nabla\phi_{0}\|_{L^2(\mathbb{R}^3)}}\|\nabla(\phi_n-\phi_{0})\|_{L^2(\mathbb{R}^3)}\|\nabla\psi\|_{L^2(\mathbb{R}^3)}\to 0,\text{ as } n\to \infty.\label{A-C1}
\end{align}
This proves \eqref{derivative continuous} and thus, $A\in C^1(\dot{H}^1(\mathbb{R}^3))$.

Next, we show that the  2-th order G$\hat{\text{a}}$teaux derivative of $A$ exists at $\phi_{0} \in\dot{H}^1(\mathbb{R}^3)$.
For $\varphi \in\dot{H}^1(\mathbb{R}^3)$ and $\psi\in\dot{H}^1(\mathbb{R}^3)$, by \eqref{z(y)} we have
\begin{align*}
&\partial_{\tau}\partial_{\lambda}A(\phi_{0}+\lambda\psi+\tau \varphi )|_{\lambda=\tau=0}\\
=&\int_{B_{\mu}}(\Psi_{\rho_{\mu}}^*)''(P\phi_{0}-\phi_{0})(P\psi-\psi)(P\varphi-\varphi)dx\\
=&\int_{B_{\mu}}-{1\over\Phi''\left((\Phi')_+^{-1}\left(P\phi_{0}-\phi_{0}+\Phi'(\rho_{\mu})\right)\right)}(P\psi-\psi)(P\varphi-\varphi)dx.
\end{align*}
By \eqref{2derivativePhi-1derivativePhi-1} and the fact that ${3(2-\gamma_1)\over2( \gamma_1-1)}\in(0,6)$, we have
\begin{align*}
&|\partial_{\tau}\partial_{\lambda}A(\phi_{0}+\lambda\psi+\tau \varphi )|_{\lambda=\tau=0}|\\
\leq&\int_{B_{\mu}\cap\{x|P\phi_{0}-\phi_{0}+\Phi'(\rho_{\mu})\leq N_2\}}{1\over\Phi''\left((\Phi')_+^{-1}\left(P\phi_{0}-\phi_{0}+\Phi'(\rho_{\mu})\right)\right)}|P\psi-\psi||P\varphi-\varphi|dx\\
&+\int_{B_{\mu}\cap\{x|P\phi_{0}-\phi_{0}+\Phi'(\rho_{\mu})>N_2\}}{1\over\Phi''\left((\Phi')_+^{-1}\left(P\phi_{0}-\phi_{0}+\Phi'(\rho_{\mu})\right)\right)}|P\psi-\psi||P\varphi-\varphi|dx\\
\leq&\sup_{y\leq N_2}{1\over\Phi''\left((\Phi')_+^{-1}\left(y\right)\right)}\int_{B_{\mu}\cap\{x|P\phi_{0}-\phi_{0}+\Phi'(\rho_{\mu})\leq N_2\}}|P\psi-\psi||P\varphi-\varphi|dx\\
&+C\int_{B_{\mu}\cap\{x|P\phi_{0}-\phi_{0}+\Phi'(\rho_{\mu})> N_2\}}\left|P\phi_{0}-\phi_{0}+\Phi'(\rho_{\mu})\right|^{{2-\gamma_1\over \gamma_1-1}}|P\psi-\psi||P\varphi-\varphi|dx\\
\leq &C\|P\psi-\psi\|_{L^2(B_{\mu})}\|P\varphi-\varphi\|_{L^2(B_{\mu})}\\
&+\|P\phi_{0}-\phi_{0}+\Phi'(\rho_{\mu})\|_{L^{{3(2-\gamma_1)\over2( \gamma_1-1)}}(B_{\mu})}^{{2-\gamma_1\over \gamma_1-1}}\|P\psi-\psi\|_{L^6(B_{\mu})}\|P\varphi-\varphi\|_{L^6(B_{\mu})}\\
\leq &C\|\nabla\psi\|_{L^2(\mathbb{R}^3)}\|\nabla\varphi\|_{L^2(\mathbb{R}^3)}
+(\|P\phi_{0}-\phi_{0}\|_{L^{6}(B_{\mu})}+C)^{{2-\gamma_1\over \gamma_1-1}}\|\nabla\psi\|_{L^2(\mathbb{R}^3)}\|\nabla\varphi\|_{L^2(\mathbb{R}^3)}\\
\leq &(C+(C\|\nabla\phi_{0}\|_{L^{2}(\mathbb{R}^3)}+C)^{{2-\gamma_1\over \gamma_1-1}})\|\nabla\psi\|_{L^2(\mathbb{R}^3)}\|\nabla\varphi\|_{L^2(\mathbb{R}^3)}\\
\leq&(C+C_{\|\nabla\phi_{0}\|_{L^{2}(\mathbb{R}^3)}})\|\nabla\psi\|_{L^2(\mathbb{R}^3)}\|\nabla\varphi\|_{L^2(\mathbb{R}^3)}.
\end{align*}
Thus,  $A$ is 2-th G$\hat{\text{a}}$teaux differentiable at $\phi_{0}\in \dot{H}^1(\mathbb{R}^3)$.

We will prove that
 \begin{align}\label{derivative continuous2}
\partial_{\tau}\partial_{\lambda}A(\phi_n+\lambda\psi+\tau \varphi )|_{\lambda=\tau=0}\to\partial_{\tau}\partial_{\lambda}A(\phi_{0}+\lambda\psi+\tau \varphi )|_{\lambda=\tau=0}
\end{align}
for  $\{\phi_n\}\subset\dot{H}^1(\mathbb{R}^3)$  such that $\phi_n\to\phi_{0}$ in $\dot{H}^1(\mathbb{R}^3)$, which implies that
$A\in C^2(\dot{H}^1(\mathbb{R}^3))$.
Since the regularity of  $\Phi$ is  $C^2(0,\infty)$, we can not expect the Lipschitz estimates like  \eqref{A-C1}, and we have to prove \eqref{derivative continuous2} by definition. Fix any $0<\varepsilon<1$. Since  $\phi_{0}\in L^6(B_{\mu})$, there exists $\delta\in(0,\varepsilon)$ such that
$\|\phi_{0}\|_{L^6(B)}<\varepsilon$ for $B\subset B_{\mu}$ with $|B|<\delta$.
 By Lusin's Theorem, there exists a  subset $B_{0,0}\subset B_{\mu}$
such that $B_{\mu}\setminus B_{0,0}$ is closed, $|B_{0,0}| < {\delta\over2}$ and $\phi_0$ is continuous on $B_{\mu}\setminus B_{0,0}$.
Since   ${1\over\Phi''\left((\Phi')_+^{-1}\left(\cdot\right)\right)}\in C^0(\mathbb{R})$, we can choose $\delta\in(0,\varepsilon)$ small enough such that
$\left|{1\over\Phi''\left((\Phi')_+^{-1}\left(y_2\right)\right)}-{1\over\Phi''\left((\Phi')_+^{-1}\left(y_1\right)\right)}\right|<\varepsilon$ for  $|y_1-y_2|<\delta$ and $$y_1,y_2\in[\min\limits_{x\in B_{\mu}\setminus B_{0,0}}\left(P\phi_{0}-\phi_{0}(x)+\Phi'(\rho_{\mu}(x))\right)-1,\max\limits_{x\in B_{\mu}\setminus B_{0,0}}\left(P\phi_{0}-\phi_{0}(x)+\Phi'(\rho_{\mu}(x))\right)+1].$$
Since $\phi_n\to\phi_{0}$ in $\dot{H}^1(\mathbb{R}^3)$, there exists $\tilde N_{\mu}>0$ large enough such that $$\|\phi_n-\phi_{0}\|_{L^6(B_{\mu})}\leq\|\phi_n-\phi_{0}\|_{L^6(\mathbb{R}^3)}\leq C\|\nabla(\phi_n-\phi_{0})\|_{L^2(\mathbb{R}^3)}\leq \left({\delta\over2}\right)^{7\over6}$$
 and
  $|P\phi_n-P\phi_{0}|\leq C\|\nabla(\phi_n-\phi_{0})\|_{L^2(\mathbb{R}^3)}\leq {1\over2}\delta$ for $n>\tilde N_{\mu}$.
Define
\begin{align*}
B_{0,1}^n=\left\{x\in B_{\mu}\setminus B_{0,0}||\phi_n(x)-\phi_{0}(x)|<{1\over2}\delta\right\},
B_{0,2}^n=\left\{x\in B_{\mu}\setminus B_{0,0}||\phi_n(x)-\phi_{0}(x)|\geq{1\over2}\delta\right\}
\end{align*}
for fixed $n>\tilde N_{\mu}$. Then
\begin{align*}
{1\over2}|B_{0,2}^n|^{1\over6}\delta\leq \|\phi_n-\phi_{0}\|_{L^6(B_{0,2}^n)}\leq \|\phi_n-\phi_{0}\|_{L^6(B_{\mu})}\leq \left({\delta\over2}\right)^{7\over6}\Longrightarrow|B_{0,2}^n|\leq{\delta\over2}<{\varepsilon\over2},
\end{align*}
for $n>\tilde N_{\mu}$. Let
\begin{align*}
\tilde \phi_n=P\phi_n-\phi_n+\Phi'(\rho_{\mu})\quad\text{and}\quad \tilde \phi_{0}=P\phi_{0}-\phi_{0}+\Phi'(\rho_{\mu}).
\end{align*}
Then for $n>\tilde N_{\mu}$ and $x\in B_{0,1}^n$, we have
 \begin{align*}
|\tilde \phi_n(x)-\tilde \phi_{0}(x)|\leq |P\phi_n-P\phi_{0}|+|\phi_n(x)-\phi_{0}(x)|\leq{1\over2}\delta+{1\over2}\delta=\delta,
\end{align*}
and thus, \begin{align*}
\left|{1\over\Phi''\left((\Phi')_+^{-1}\left(\tilde \phi_n(x)\right)\right)}-{1\over\Phi''\left((\Phi')_+^{-1}\left(\tilde \phi_{0}(x)\right)\right)}\right|<\varepsilon.
\end{align*}
Then $|B_{0,0}\cup B_{0,2}^n|\leq {\delta\over2}+{\delta\over2}=\delta<\varepsilon$, and
 \begin{align*}
&|\partial_{\tau}\partial_{\lambda}A(\phi_n+\lambda\psi+\tau \varphi )|_{\lambda=\tau=0}-\partial_{\tau}\partial_{\lambda}A(\phi_{0}+\lambda\psi+\tau \varphi )|_{\lambda=\tau=0}|\\
\leq& \int_{B_{0,1}^n}\left|{1\over\Phi''\left((\Phi')_+^{-1}\left(\tilde \phi_n\right)\right)}
-{1\over\Phi''\left((\Phi')_+^{-1}\left(\tilde \phi_{0}\right)\right)}\right||P\psi-\psi||P\varphi-\varphi|dx\\
&+ \int_{B_{0,0}\cup B_{0,2}^n}\left|{1\over\Phi''\left((\Phi')_+^{-1}\left(\tilde \phi_n\right)\right)}
-{1\over\Phi''\left((\Phi')_+^{-1}\left(\tilde \phi_{0}\right)\right)}\right||P\psi-\psi||P\varphi-\varphi|dx\\
\leq& \varepsilon\int_{B_{0,1}^n}|P\psi-\psi||P\varphi-\varphi|dx
+ \int_{B_{0,0}\cup B_{0,2}^n}\left|{1\over\Phi''\left((\Phi')_+^{-1}\left(\tilde \phi_n\right)\right)}\right||P\psi-\psi||P\varphi-\varphi|dx\\
&+\int_{B_{0,0}\cup B_{0,2}^n}\left|{1\over\Phi''\left((\Phi')_+^{-1}\left(\tilde \phi_{0}\right)\right)}\right||P\psi-\psi||P\varphi-\varphi|dx\\
=&III_1+III_2+III_3.
\end{align*}
For $III_1$, we have
\begin{align*}
III_1\leq \varepsilon\|\nabla\psi\|_{L^2(\mathbb{R}^3)}\|\nabla\varphi\|_{L^2(\mathbb{R}^3)}.
\end{align*}
To estimate $III_2$ and $III_3$, by the fact that  $|P\phi_{0}|\leq C_{\|\nabla\phi_{0}\|_{L^2(\mathbb{R}^3)}}<\infty$ we have
\begin{align*}
 \|\tilde \phi_{0}\|_{L^{6}(B_{0,0}\cup B_{0,2}^n)}\leq& \|\phi_{0}\|_{L^{6}(B_{0,0}\cup B_{0,2}^n)}+C_{\|\nabla\phi_{0}\|_{L^2(\mathbb{R}^3)}}|B_{0,0}\cup B_{0,2}^n|^{1\over6}\leq \varepsilon+C_{\|\nabla\phi_{0}\|_{L^2(\mathbb{R}^3)}}\varepsilon^{1\over6},\\
 \|\tilde \phi_n\|_{L^{6}(B_{0,0}\cup B_{0,2}^n)}\leq&  \|\tilde \phi_n-\tilde \phi_{0}\|_{L^{6}(B_{0,0}\cup B_{0,2}^n)}+ \|\tilde \phi_{0}\|_{L^{6}(B_{0,0}\cup B_{0,2}^n)}\\
 \leq&
 \|P (\phi_n-\phi_{0})+(\phi_n-\phi_{0})\|_{L^{6}(B_{0,0}\cup B_{0,2}^n)}+\varepsilon+C_{\|\nabla\phi_{0}\|_{L^2(\mathbb{R}^3)}}\varepsilon^{1\over6}\\
 \leq& C\varepsilon+\varepsilon+C_{\|\nabla\phi_{0}\|_{L^2(\mathbb{R}^3)}}\varepsilon^{1\over6}
\end{align*}
for $n>\tilde N_{\mu}$.
For $III_2$, by \eqref{2derivativePhi-1derivativePhi-1} we have
\begin{align*}
III_2\leq& \int_{(B_{0,0}\cup B_{0,2}^n)\cap\{x|\tilde \phi_n(x)\leq N_2\}}{1\over\Phi''\left((\Phi')_+^{-1}\left(\tilde \phi_n\right)\right)}|P\psi-\psi||P\varphi-\varphi|dx\\
&+\int_{(B_{0,0}\cup B_{0,2}^n)\cap\{x|\tilde \phi_n(x)> N_2\}}{1\over\Phi''\left((\Phi')_+^{-1}\left(\tilde \phi_n\right)\right)}|P\psi-\psi||P\varphi-\varphi|dx\\
\leq& C\int_{(B_{0,0}\cup B_{0,2}^n)\cap\{x|\tilde \phi_n(x)\leq N_2\}}|P\psi-\psi||P\varphi-\varphi|dx+\\
&\|\tilde \phi_n\|_{L^{{3(2-\gamma_1)\over2( \gamma_1-1)}}(B_{0,0}\cup B_{0,2}^n)}^{{2-\gamma_1\over \gamma_1-1}}\|P\psi-\psi\|_{L^6(B_{0,0}\cup B_{0,2}^n)}\|P\varphi-\varphi\|_{L^6(B_{0,0}\cup B_{0,2}^n)}\\
\leq &C|B_{0,0}\cup B_{0,2}^n|^{2\over3}\|P\psi-\psi\|_{L^6(B_{0,0}\cup B_{0,2}^n)}\|P\varphi-\varphi\|_{L^6(B_{0,0}\cup B_{0,2}^n)}\\
&+\|\tilde \phi_n\|_{L^{6}(B_{0,0}\cup B_{0,2}^n)}^{{2-\gamma_1\over \gamma_1-1}}\|\nabla\psi\|_{L^2(\mathbb{R}^3)}\|\nabla\varphi\|_{L^2(\mathbb{R}^3)}\\
\leq& \left(C\varepsilon^{2\over3}+(C\varepsilon+\varepsilon+C_{\|\nabla\phi_{0}\|_{L^2(\mathbb{R}^3)}}\varepsilon^{1\over6})^{^{{2-\gamma_1\over \gamma_1-1}}}\right)\|\nabla\psi\|_{L^2(\mathbb{R}^3)}\|\nabla\varphi\|_{L^2(\mathbb{R}^3)}.
\end{align*}
Similar to  $III_2$,    we have
\begin{align*}
III_3
\leq& \left(C\varepsilon^{2\over3}+(\varepsilon+C_{\|\nabla\phi_{0}\|_{L^2(\mathbb{R}^3)}}\varepsilon^{1\over6})^{^{{2-\gamma_1\over \gamma_1-1}}}\right)\|\nabla\psi\|_{L^2(\mathbb{R}^3)}\|\nabla\varphi\|_{L^2(\mathbb{R}^3)}.
\end{align*}
Then \eqref{derivative continuous2} follows from the estimates for $III_1$, $III_2$ and $III_3$.
\end{proof}

\section{Kernel and negative directions  of $B''(0)$}
Define
\begin{align}\label{def-tilde-L-mu}
\tilde L_\mu=-\Delta-{4\pi\over \Phi''(\rho_{\mu})}(I-P): \dot{H}^1(\mathbb{R}^3)\to\dot{H}^{-1}(\mathbb{R}^3).
\end{align}

For $\phi\in\dot{H}^1(\mathbb{R}^3)$, we have \begin{align*}
\int_{B_{\mu}}{1\over \Phi''(\rho_{\mu})}(\phi-P\phi)\phi dx=\int_{B_{\mu}}{1\over \Phi''(\rho_{\mu})}(\phi-P\phi)^2dx,
\end{align*} and thus,
\begin{align*}
\langle B''(0)\phi,\phi\rangle={1\over 4\pi}\langle\tilde L_\mu\phi,\phi\rangle.
\end{align*}

Consider the quadratic form
\begin{align}\label{density quadratic form}
\;\;\langle L_{\mu}\rho,\rho\rangle=&\int_{B_{\mu}}\Phi''(\rho_{\mu})\rho^2dx-{1\over 4\pi}\int_{\mathbb{R}^3}|\nabla V|^2dx,\quad\rho\in Y_{\mu}:=L^2_{\Phi''(\rho_{\mu})}(B_{\mu})
\end{align}
with $\Delta V=4\pi\rho$. By (3.21) in \cite{LZ2022}, we have  $V=4\pi\Delta^{-1}\rho\in \dot{H}^1(\mathbb{R}^3)$ for $\rho\in Y_{\mu}$, and thus,  the quadratic form \eqref{density quadratic form} is well-defined. The operator associated to \eqref{density quadratic form} is
\begin{align*}
L_{\mu}=\Phi''(\rho_{\mu})-{4\pi}(-\Delta)^{-1} :\;\; Y_{\mu}\to Y_{\mu}^*.
\end{align*}
We then study the relations of the non-positive   directions and kernel between $\tilde L_\mu$ and  $L_{\mu}|_{Z_\mu}$, where
\begin{align}\label{def-Z-mu}
Z_\mu=\{\rho\in Y_{\mu}|\int_{B_{\mu}}\rho dx=0\}.
\end{align}
\begin{lemma}\label{negative-zero-direction}
\begin{align*}
 n^{\leq 0}(\tilde L_\mu)   =n^{\leq0}(L_{\mu}|_{Z_\mu})\quad\text{and} \quad
  \dim(\ker(\tilde L_\mu))   =\dim(\ker(L_{\mu}|_{Z_\mu})),
  \end{align*}
  where $n^{\leq0}(\tilde L_\mu)$ is the  non-positive dimension
of  $\langle\tilde L_\mu\cdot,\cdot\rangle$ and $n^{\leq0}(L_{\mu}|_{Z_\mu})$ is  the  non-positive dimension
of  $\langle L_\mu\cdot,\cdot\rangle$ restricted to $Z_\mu$.
\end{lemma}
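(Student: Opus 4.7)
The plan is to exhibit both quadratic forms as two different diagonalizations of a single bilinear form on a product space, and then apply the inertia-additivity principle (Haynsworth's formula in Sylvester's law of inertia language). This is a completion-of-the-square argument carried out twice.

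I would define the symmetric bilinear form
\begin{align*}
\mathcal{Q}(\phi,\rho) \;=\; \frac{1}{4\pi}\int_{\mathbb{R}^3}|\nabla\phi|^2\,dx \;+\; 2\int_{B_\mu}\phi\rho\,dx \;+\; \int_{B_\mu}\Phi''(\rho_\mu)\rho^2\,dx
\end{align*}
on the Hilbert space $\mathcal{H}=\dot{H}^1(\mathbb{R}^3)\oplus Z_\mu$. The cross term makes sense because $\phi\vert_{B_\mu}\in L^2(B_\mu)$ via Sobolev embedding, while $\rho\in L^2(B_\mu)$ follows from $\rho\in Y_\mu$ and the fact that $\Phi''(\rho_\mu)$ is bounded below on compact subsets of $B_\mu$. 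The outer integral in the Coulomb term is controlled by (3.21) of \cite{LZ2022}, guaranteeing $V_\rho=4\pi\Delta^{-1}\rho\in\dot{H}^1(\mathbb{R}^3)$ whenever $\rho\in Y_\mu$.

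First, for fixed $\phi$, I would complete the square in $\rho\in Z_\mu$. A Lagrange multiplier for the mass constraint $\int\rho=0$ identifies the minimizer as $\rho^*_\phi=\Phi''(\rho_\mu)^{-1}(P\phi-\phi)\chi_{B_\mu}$; one verifies directly from the definition of $P$ in \eqref{ProjectionP} that $\int_{B_\mu}\rho^*_\phi\,dx=0$ and $\int_{B_\mu}\Phi''(\rho_\mu)(\rho^*_\phi)^2\,dx<\infty$. Writing $\rho=\rho^*_\phi+\sigma$ with $\sigma\in Z_\mu$, the cross term reduces to $2(P\phi)\int_{B_\mu}\sigma\,dx=0$ because $P\phi$ is a constant and $\sigma\in Z_\mu$, and a short calculation yields
\begin{align*}
\mathcal{Q}(\phi,\rho) \;=\; \frac{1}{4\pi}\langle\tilde{L}_\mu\phi,\phi\rangle \;+\; \int_{B_\mu}\Phi''(\rho_\mu)\sigma^2\,dx.
\end{align*}
Symmetrically, for fixed $\rho\in Z_\mu$, completing the square in $\phi$ identifies the minimizer as $V_\rho=4\pi\Delta^{-1}\rho$. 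Writing $\phi=V_\rho+\psi$ and using integration by parts together with $\Delta V_\rho=4\pi\rho$ gives
\begin{align*}
\mathcal{Q}(\phi,\rho) \;=\; \frac{1}{4\pi}\int_{\mathbb{R}^3}|\nabla\psi|^2\,dx \;+\; \langle L_\mu\rho,\rho\rangle.
\end{align*}

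The maps $(\phi,\rho)\mapsto(\phi,\sigma)$ and $(\phi,\rho)\mapsto(\psi,\rho)$ are linear isomorphisms of $\mathcal{H}$. In the first coordinates, $\mathcal{Q}$ is the orthogonal direct sum of $\frac{1}{4\pi}\tilde{L}_\mu$ on $\dot{H}^1$ with the positive-definite form $\sigma\mapsto\int\Phi''(\rho_\mu)\sigma^2$ on $Z_\mu$; in the second, it is the direct sum of the positive-definite form $\frac{1}{4\pi}\int|\nabla\psi|^2$ with $L_\mu\vert_{Z_\mu}$. Since a positive-definite summand contributes nothing to the non-positive dimension or to the kernel, one reads off $n^{\leq 0}(\mathcal{Q})=n^{\leq 0}(\tilde{L}_\mu)=n^{\leq 0}(L_\mu\vert_{Z_\mu})$ and $\dim\ker(\mathcal{Q})=\dim\ker(\tilde{L}_\mu)=\dim\ker(L_\mu\vert_{Z_\mu})$. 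The main place to be careful is the boundary behaviour at $r=R_\mu$, where $\Phi''(\rho_\mu)$ blows up and $\Phi''(\rho_\mu)^{-1}$ vanishes: these are exactly the features needed to put $\rho^*_\phi$ in $Y_\mu$ and to legitimize the Coulomb inversion, and both are already established earlier in the paper and in \cite{LZ2022}.
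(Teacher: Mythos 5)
Your proof is correct and, once unwound, rests on exactly the same algebra as the paper's, but it organizes that algebra more transparently. The paper proves the index equality by producing two one-sided inequalities, $\langle L_\mu\rho,\rho\rangle\ge\frac{1}{4\pi}\langle\tilde L_\mu V,V\rangle$ for $\rho\in Z_\mu$, $V=4\pi\Delta^{-1}\rho$, and $\langle\tilde L_\mu\phi,\phi\rangle\ge 4\pi\langle L_\mu\rho_\phi,\rho_\phi\rangle$ for $\phi\in\dot H^1$, $\rho_\phi=\Phi''(\rho_\mu)^{-1}(\phi-P\phi)$, each of which is precisely one of your two square-completions with the positive-definite remainder discarded; it then handles the kernel equality by a separate direct verification that the correspondences $\rho\mapsto V$ and $\phi\mapsto\rho_\phi$ carry one kernel into the other. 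Your version introduces the single auxiliary form $\mathcal{Q}$ on $\dot H^1\oplus Z_\mu$, performs both completions in full (keeping the residuals), and reads off both the inertia count and the nullity simultaneously from the two block-diagonalizations via additivity of the non-positive dimension and the radical under orthogonal direct sums with a positive-definite summand. What this buys is economy: you do not need a separate kernel argument, and the ``duality'' between the $\rho$-picture and the $V$-picture is made explicit rather than implicit. One small caution is that the additivity of $n^{\le 0}$ and of the nullity across a direct sum deserves a remark in the infinite-dimensional setting; here it is safe because $\tilde L_\mu$ and $L_\mu|_{Z_\mu}$ are each a compact perturbation of a coercive operator (so both have finite-dimensional non-positive spectral subspaces), but you should say so. Also note the change of coordinates $(\phi,\rho)\mapsto(\phi,\sigma)$ is only a linear automorphism of $\dot H^1\oplus Z_\mu$ once one checks $\rho^*_\phi\in Z_\mu$, i.e.\ $\int_{B_\mu}\Phi''(\rho_\mu)(\rho^*_\phi)^2\,dx=\int_{B_\mu}\Phi''(\rho_\mu)^{-1}(\phi-P\phi)^2\,dx<\infty$, which follows from $\dot H^1\hookrightarrow L^2(B_\mu)$ and $\Phi''(\rho_\mu)\ge c>0$ on $B_\mu$; you gesture at this but the paper's \eqref{P2}-based blow-up of $\Phi''(\rho_\mu)$ near $\partial B_\mu$ is actually what makes the bound trivial, not something one needs to fight.
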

\begin{proof}
For  $\rho\in Z_\mu$ with $\Delta V=4\pi\rho$, we have
\begin{align*}
\langle L_{\mu}\rho,\rho\rangle=&{1\over 4\pi}\int_{\mathbb{R}^3}|\nabla V|^2dx+\int_{B_{\mu}}\Phi''(\rho_{\mu})\rho^2dx+\int_{B_{\mu}}2V\rho dx\\
=&{1\over 4\pi}\int_{\mathbb{R}^3}|\nabla V|^2dx+\int_{B_{\mu}}\Phi''(\rho_{\mu})\rho^2dx+\int_{B_{\mu}}2(V-PV)\rho dx\\
\geq&{1\over 4\pi}\int_{\mathbb{R}^3}|\nabla V|^2dx-\int_{B_{\mu}}{1\over\Phi''(\rho_{\mu})}(V-PV)^2 dx\\
=&{1\over 4\pi}\int_{\mathbb{R}^3}|\nabla V|^2dx-\int_{B_{\mu}}{1\over\Phi''(\rho_{\mu})}(V-PV)V dx={1\over 4\pi}\langle\tilde L_\mu V,V\rangle.
\end{align*}
Thus, $n^{\leq 0}(\tilde L_\mu)  \geq n^{\leq0}(L_{\mu}|_{Z_\mu})$. For $\phi\in \dot{H}^1(\mathbb{R}^3)$, let $\rho_\phi={1\over \Phi''(\rho_{\mu})}(\phi-P\phi)$.
Then
$\int_{B_{\mu}}{ \Phi''(\rho_{\mu})}\rho_\phi^2 dx=\int_{B_{\mu}}{1\over \Phi''(\rho_{\mu})}(P\phi-\phi)^2 dx\leq C\|\nabla\phi\|_{L^2(\mathbb{R}^3)}^2$ and $\int_{B_{\mu}}\rho_\phi dx=0$. Thus, $\rho_\phi\in Z_\mu$ and $V_\phi:=4\pi\Delta^{-1}\rho_\phi\in\dot{H}^1(\mathbb{R}^3)$. Then
\begin{align*}
\notag\langle\tilde L_\mu\phi,\phi\rangle=&\int_{\mathbb{R}^3}|\nabla\phi|^2dx-\int_{B_{\mu}}{4\pi\over \Phi''(\rho_{\mu})}(\phi-P\phi)^2 dx\\
\notag=&\int_{\mathbb{R}^3}|\nabla\phi|^2dx-4\pi\int_{B_{\mu}}{ \Phi''(\rho_{\mu})}\rho_\phi^2 dx\\
\notag=&4\pi\int_{B_{\mu}}{ \Phi''(\rho_{\mu})}\rho_\phi^2 dx+\int_{\mathbb{R}^3}|\nabla\phi|^2dx-8\pi\int_{B_{\mu}}(\phi-P\phi)\rho_\phi dx\\
\notag=&4\pi\int_{B_{\mu}}{ \Phi''(\rho_{\mu})}\rho_\phi^2 dx+\int_{\mathbb{R}^3}|\nabla\phi|^2dx+2\int_{\mathbb{R}^3}\nabla\phi\cdot \nabla V_\phi dx\\
\geq&4\pi\int_{B_{\mu}}{ \Phi''(\rho_{\mu})}\rho_\phi^2 dx-\int_{\mathbb{R}^3}|\nabla V_\phi |^2dx=4\pi\langle L_{\mu}\rho_\phi,\rho_\phi\rangle.
\end{align*}
Thus, $n^{\leq 0}(\tilde L_\mu)  \leq n^{\leq0}(L_{\mu}|_{Z_\mu})$. This proves $n^{\leq 0}(\tilde L_\mu)  = n^{\leq0}(L_{\mu}|_{Z_\mu})$.

For $\rho\in \ker(L_{\mu}|_{Z_\mu})$, let $ V=4\pi\Delta^{-1}\rho\in\dot{H}^1(\mathbb{R}^3)$. Then
\begin{align*}
\langle L_{\mu}|_{Z_\mu}\rho,\tilde\rho\rangle=\int_{B_{\mu}}\Phi''(\rho_{\mu})\rho\tilde \rho dx+\int_{\mathbb{R}^3}(V-PV)\tilde \rho dx=0
\end{align*}
for $\tilde \rho \in Z_\mu$. For any $\hat V\in\dot{H}^1(\mathbb{R}^3)$, let $\hat\rho={1\over \Phi''(\rho_{\mu})}(\hat V-P\hat V)\in Z_\mu$. Since
$\int_{B_{\mu}}\rho dx=0$, by the definition of the projection $P$ we have
\begin{align*}
\langle\tilde L_\mu V,\hat V\rangle =& -\int_{B_{\mu}}4\pi\rho(\hat V-P\hat V)dx-\int_{B_{\mu}}{4\pi\over \Phi''(\rho_{\mu})}(V-PV)(\hat V-P\hat V)dx\\
=&-4\pi\left(\int_{B_{\mu}}\Phi''(\rho_{\mu})\rho\hat\rho dx+\int_{B_{\mu}}(V-PV)\hat\rho dx\right)=\langle L_{\mu}|_{Z_\mu}\rho,\hat \rho\rangle=0,
\end{align*}
which gives $\tilde L_\mu V=0\in \dot{H}^{-1}(\mathbb{R}^3)$. Thus, $\dim(\ker(\tilde L_\mu))  \geq\dim(\ker(L_{\mu}|_{Z_\mu})).$

For $\phi\in\ker(\tilde L_\mu)$, let $\rho_\phi={1\over \Phi''(\rho_{\mu})}(\phi-P\phi)\in Z_\mu$. Then
\begin{align*}
\langle \tilde L_\mu\phi,\tilde\phi\rangle=\int_{\mathbb{R}^3}\nabla\phi \cdot\nabla \tilde\phi dx-\int_{B_{\mu}}{4\pi\over \Phi''(\rho_{\mu})}(\phi-P\phi)\tilde \phi dx=0
\end{align*}
for $\tilde \phi\in \dot{H}^{1}(\mathbb{R}^3)$. For $\tilde \rho\in Z_\mu$, let $\tilde V=4\pi\Delta^{-1}\tilde \rho\in \dot{H}^1(\mathbb{R}^3)$. Then
\begin{align*}
\langle L_{\mu}|_{Z_\mu}\rho_\phi,\tilde\rho\rangle =& \int_{B_{\mu}}(\phi-P\phi)\tilde \rho dx+\int_{B_{\mu}}\rho_\phi\tilde Vdx
=\int_{B_{\mu}}\phi\tilde \rho dx+\int_{B_{\mu}}{1\over \Phi''(\rho_{\mu})}(\phi-P\phi)\tilde V dx\\
=&\int_{B_{\mu}}\phi\tilde \rho dx+{1\over 4\pi}\int_{\mathbb{R}^3}\nabla\phi\cdot\nabla\tilde V dx=0,
\end{align*}
which implies $L_{\mu}|_{Z_\mu}\rho_\phi=0\in Z_\mu^*$. Thus, $\dim(\ker(\tilde L_\mu))  \leq\dim(\ker(L_{\mu}|_{Z_\mu})).$ This proves $\dim(\ker(\tilde L_\mu))  =\dim(\ker(L_{\mu}|_{Z_\mu})).$
\end{proof}

Then we study the kernel of $\tilde L_\mu$.
\begin{lemma}\label{kerLmu}
If $M' (\mu)\neq0$, then
\begin{align*}
 \ker\left(\tilde L_\mu\right)=\{\partial_{x^i}V_{\mu}, i=1,2,3\}.
\end{align*}
\end{lemma}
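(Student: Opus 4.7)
The plan is to establish the inclusion $\{\partial_{x^i}V_\mu\}_{i=1}^3 \subseteq \ker(\tilde L_\mu)$ by direct computation using the hydrostatic equation, and then to obtain equality by a dimension argument that combines Lemma \ref{negative-zero-direction} with a spherical-harmonic decomposition and the Sturm--Liouville analysis from \cite{LZ2022}.

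For the forward inclusion, first note that since $V_\mu(x)=-M_\mu/|x|$ for $|x|\geq R_\mu$, each $\partial_{x^i}V_\mu$ belongs to $\dot H^1(\mathbb R^3)$. Differentiating the steady-state relation $\Phi'(\rho_\mu)=V_\mu(R_\mu)-V_\mu$ in $x^i$ on $B_\mu$ gives $\Phi''(\rho_\mu)\partial_{x^i}\rho_\mu=-\partial_{x^i}V_\mu$, and combining with Poisson's equation $\Delta V_\mu=4\pi\rho_\mu$ yields $-\Delta(\partial_{x^i}V_\mu)=\tfrac{4\pi}{\Phi''(\rho_\mu)}\partial_{x^i}V_\mu$ on $B_\mu$; both sides vanish on $B_\mu^c$ since $V_\mu$ is harmonic there. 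The divergence theorem together with $\rho_\mu|_{\partial B_\mu}=0$ gives $\int_{B_\mu}\partial_{x^i}V_\mu/\Phi''(\rho_\mu)\,dx=-\int_{B_\mu}\partial_{x^i}\rho_\mu\,dx=0$, hence $P(\partial_{x^i}V_\mu)=0$. These identities combined show $\tilde L_\mu(\partial_{x^i}V_\mu)=0$, and linear independence of the three functions is immediate from the strict monotonicity of the radial profile $V_\mu(r)$.

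For the reverse inclusion, by Lemma \ref{negative-zero-direction} it suffices to prove $\dim\ker(L_\mu|_{Z_\mu})\leq 3$. Any $\rho\in\ker(L_\mu|_{Z_\mu})$ must satisfy $\Phi''(\rho_\mu)\rho+V=c$ on $B_\mu$ for some $c\in\mathbb R$, with $\Delta V=4\pi\rho$ on $\mathbb R^3$, $V\to 0$ at infinity, and the constraint $\int_{B_\mu}\rho\,dx=0$. Expanding $\rho=\sum_{l,m}\rho_{lm}(r)Y_{lm}(\omega)$ (and similarly $V$), the equation decouples into radial Sturm--Liouville problems in each angular sector. In the $l=0$ sector, differentiating the steady-state relation in the central density $\mu$ shows that $\partial_\mu\rho_\mu$ spans the one-dimensional radial solution space; since $\int_{B_\mu}\partial_\mu\rho_\mu\,dx=M'(\mu)\neq 0$, the mass constraint forces $\rho_{00}\equiv 0$. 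In the $l=1$ sector, the three translates $\partial_{x^i}\rho_\mu$ already exhaust the solution space (one dimension per azimuthal index $m$). For $l\geq 2$, the radial ODE
\begin{equation*}
V_l''+\tfrac{2}{r}V_l'-\tfrac{l(l+1)}{r^2}V_l+\tfrac{4\pi}{\Phi''(\rho_\mu)}V_l\chi_{B_\mu}=0,
\end{equation*}
together with regularity at $r=0$ and the decay $V_l(r)\sim r^{-l-1}$ at infinity, admits only the trivial solution.

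The main obstacle is the $l\geq 2$ case, because positivity of the associated radial operator in each higher angular sector cannot be extracted from the assumption $n^u(\mu)=0$ directly (which only encodes $L_\mu|_{Z_\mu}\geq 0$); instead it must be imported from the turning-point spectral analysis in \cite{LZ2022}, where the higher-$l$ sectors are shown to contribute no non-positive directions. The $l=0$ argument hinges critically on the non-degeneracy hypothesis $M'(\mu)\neq 0$, which is precisely why the conclusion fails at mass extrema, while the $l=1$ step is purely a consequence of translation invariance and is therefore universal.
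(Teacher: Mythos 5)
Your proof follows essentially the same route as the paper: verify the forward inclusion by a direct computation of $\tilde L_\mu(\partial_{x^i}V_\mu)$ and of $P(\partial_{x^i}V_\mu)$, then pass to $L_\mu|_{Z_\mu}$ via Lemma \ref{negative-zero-direction}, split the kernel equation $L_\mu\rho=c$ into radial and non-radial parts, kill the radial direction $\partial_\mu\rho_\mu$ using $M'(\mu)\neq0$ and the mass constraint, and handle the non-radial parts by citing the ODE/spectral analysis in \cite{LZ2022}. The paper uses the coarser radial/non-radial split and invokes Lemmas 3.4 and 3.5(i)--(ii) of \cite{LZ2022} for the entire non-radial kernel at once, plus Theorem 1.2(iii) of \cite{LZ2022} for the one-dimensionality of the radial solution space; your mode-by-mode spherical-harmonic version is equivalent in substance.

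One conceptual slip to correct: the lemma assumes only $M'(\mu)\neq0$, not $n^u(\mu)=0$, so there is no stability hypothesis available, and none is needed. What must be imported from \cite{LZ2022} for the $l\geq1$ sectors is a \emph{kernel identification} for the operator $-\Delta-\tfrac{4\pi}{\Phi''(\rho_\mu)}$ restricted to non-radial functions, namely that it equals $\mathrm{span}\{\partial_{x^i}V_\mu\}$ (Lemma 3.5 of \cite{LZ2022}); this is an ODE comparison fact that holds for every $\mu$, independent of whether $L_\mu|_{Z_\mu}\geq0$. Framing it as ``positivity of the higher-$l$ sectors that cannot be extracted from $n^u(\mu)=0$'' conflates what is assumed with what is needed: neither positivity nor $n^u(\mu)=0$ is relevant here. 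Likewise, for $l=0$ you assert that $\partial_\mu\rho_\mu$ spans a one-dimensional solution space without argument; this is precisely the content of Theorem 1.2(iii) of \cite{LZ2022} and should be cited rather than asserted. With those attributions fixed, the proof is sound and matches the paper's.
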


\begin{proof}
Since $\Delta V_\mu=4\pi\rho_\mu=4\pi(\Phi')_+^{-1}(V_\mu(R_{\mu})-V_\mu)$, we have $\Delta (\partial_{x^i}V_\mu)=-{4\pi\over \Phi''(\rho_\mu)}\partial_{x^i}V_\mu$.
Moreover,
 \begin{align*}
 P(\partial_{x^i}V_\mu)=\frac{\int_{B_{\mu}}\frac{\partial_{x^i}V_\mu}{\Phi''(\rho_{\mu})}dx}{\int_{B_{\mu}}\frac{1}{\Phi''(\rho_{\mu})}dx}
 =\frac{-\partial_{x^i}\int_{B_{\mu}}\rho_{\mu}dx}{\int_{B_{\mu}}\frac{1}{\Phi''(\rho_{\mu})}dx}=0.
 \end{align*}
Thus,  $\{\partial_{x^i}V_{\mu}, i=1,2,3\}\subset\ker\left(\tilde L_\mu\right).$
By Lemma \ref{negative-zero-direction}, it suffices to prove that $\dim(\ker(L_{\mu}|_{Z_{\mu}}))=3$.
For $\rho=\rho_{\text{r}}+\rho_{\text{nr}}\in\ker(L_{\mu}|_{Z_\mu})$, we have $L_{\mu}\rho=L_{\mu}\rho_{\text{r}}+L_{\mu} \rho_{\text{nr}}=c$ for some constant $c$, where $\rho_{\text{r}}\in Z_{\mu,\text{r}}:=\{\rho\in Z_\mu|\rho(x)=\rho(|x|)\}$ and $\rho_{\text{nr}}\in Z_{\mu,\text{nr}}:=\{\rho\in Z_\mu|\int_{\mathbb{S}^2}\rho(r\theta)dS_\theta=0\}$ are  the radial  and non-radial parts of $\rho$, respectively. Since $L_{\mu}\rho_{\text{nr}}=c-L_{\mu} \rho_{\text{r}}\in Z_{\mu,\text{r}}\cap  Z_{\mu,\text{nr}}$, we have $L_{\mu}\rho_{\text{nr}}=0$ and $L_{\mu} \rho_{\text{r}}=c$.

Let $\psi_{\text{nr}}=(-\Delta)^{-1}\rho_{\text{nr}}\in \dot{H}^1(\mathbb{R}^3)$. Since $L_{\mu}\rho_{\text{nr}}=\Phi''(\rho_{\mu})\rho_{\text{nr}}-{4\pi}(-\Delta)^{-1}\rho_{\text{nr}}=0$, we have $\left(-\Delta-{4\pi\over \Phi''(\rho_{\mu})}\right)\psi_{\text{nr}}=0$. By  Lemma 3.5 (i)-(ii) and Lemma 3.4 in \cite{LZ2022},
\begin{align*}
&\ker\left(\left(-\Delta-{4\pi\over \Phi''(\rho_{\mu})}\right)|_{\{\psi\in\dot{H}^1(\mathbb{R}^3)|\int_{\mathbb{S}^2}\psi(r\theta)dS_\theta=0\}}\right)=\{\partial_{x^i}V_{\mu}, i=1,2,3\},\\
&\ker\left(L_{\mu}|_{Y_{\mu,\text{nr}}}\right)=\{\Delta\partial_{x^i}V_{\mu}, i=1,2,3\},
\end{align*}
where  $Y_{\mu,\text{nr}}:=\{\rho\in Y_{\mu}|\int_{\mathbb{S}^2}\rho(r\theta)dS_\theta=0\}$. Since
$\int_{B_{\mu}}\Delta\partial_{x^i}V_{\mu}dx=4\pi \partial_{x_i}\int_{B_{\mu}}\rho_{\mu}dx=0$, we have $\Delta\partial_{x^i}V_{\mu}\in Z_{\mu,\text{nr}}$, and thus, $\ker\left(L_{\mu}|_{Z_{\mu,\text{nr}}}\right)=\{\Delta\partial_{x^i}V_{\mu}, i=1,2,3\}$. In particular, we have $\int_{B_{\mu}}\rho_{\text{nr}}dx=0$.

Since $L_{\mu} \rho_{\text{r}}=c$, by Theorem 1.2 (iii) in \cite{LZ2022} we have $\rho_{\text{r}}=C\partial_\mu\rho_\mu$ for some constant $C$.  Since $\int_{B_{\mu}}\rho_{\text{r}}dx=\int_{B_{\mu}}\rho dx-\int_{B_{\mu}}\rho_{\text{nr}}dx=0$ and $M'(\mu)\neq0$, we have $0=\int_{B_{\mu}}\rho_{\text{r}}dx=C\partial_\mu\int_{B_{\mu}}\rho_\mu dx=CM'(\mu_{\mu})$, which implies that $C=0$ and $\rho_{\text{r}}=0$.
\end{proof}

\begin{lemma}\label{B0positive}
If the number of unstable modes $n^u(\mu)=0$ and $M'(\mu)\neq0$, then
\begin{align}\label{positive bound}
 \langle B''(0)\phi,\phi\rangle\geq C_0\|\nabla\phi\|^2_{L^2(\mathbb{R}^3)},\quad\forall\; \phi\in (\ker \tilde{L}_\mu)^\perp,
  \end{align}
  for some $C_0>0$.
\end{lemma}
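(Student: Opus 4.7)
The proof will proceed in three main steps. First, by the identity displayed just before Lemma \ref{negative-zero-direction}, $\langle B''(0)\phi,\phi\rangle = \tfrac{1}{4\pi}\langle \tilde L_\mu\phi,\phi\rangle$, so it is enough to establish
\begin{equation*}
\langle \tilde L_\mu\phi,\phi\rangle \geq c\,\|\nabla\phi\|_{L^2(\mathbb{R}^3)}^2 \qquad \text{for all } \phi\in(\ker\tilde L_\mu)^\perp,
\end{equation*}
for some $c>0$, where orthogonality is taken in $\dot H^1(\mathbb{R}^3)$.

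Second, I would verify that $\tilde L_\mu\geq 0$ on all of $\dot H^1(\mathbb{R}^3)$. The hypothesis $n^u(\mu)=0$ combined with the identity $n^u(\mu)=n^-(L_\mu)-i_\mu$ from \cite{LZ2022} forces $n^-(L_\mu)=i_\mu\leq 1$. When $i_\mu=0$ one already has $L_\mu\geq 0$ and hence $L_\mu|_{Z_\mu}\geq 0$; when $i_\mu=1$ the single negative direction of $L_\mu$ is proportional to $\partial_\mu\rho_\mu$ by Theorem 1.2(iii) of \cite{LZ2022}, and since $\int_{B_\mu}\partial_\mu\rho_\mu\,dx=M'(\mu)\neq 0$ this direction lies outside $Z_\mu$, so again $L_\mu|_{Z_\mu}\geq 0$. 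Lemma \ref{negative-zero-direction} then transfers this to $n^-(\tilde L_\mu)=0$, i.e.\ $\tilde L_\mu\geq 0$, and Lemma \ref{kerLmu} identifies $\ker\tilde L_\mu=\mathrm{span}\{\partial_{x^1}V_\mu,\partial_{x^2}V_\mu,\partial_{x^3}V_\mu\}$.

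Third, I would obtain the coercivity by a contradiction/compactness argument. Suppose \eqref{positive bound} fails. Then there is a sequence $\{\phi_n\}\subset(\ker\tilde L_\mu)^\perp$ with $\|\nabla\phi_n\|_{L^2}=1$ and $\langle\tilde L_\mu\phi_n,\phi_n\rangle\to 0$. After extracting a subsequence, $\phi_n\rightharpoonup\phi_\infty$ weakly in $\dot H^1(\mathbb{R}^3)$, and $\phi_\infty\in(\ker\tilde L_\mu)^\perp$ by weak closedness of this subspace. The key compactness input is that $\tfrac{1}{\Phi''(\rho_\mu)}$ is bounded on $B_\mu$ and vanishes as $r\to R_\mu^-$ by \eqref{P2} (exactly the computation following \eqref{ProjectionP}); combined with Rellich's compact embedding $\dot H^1(\mathbb{R}^3)\hookrightarrow L^2(B_\mu)$ and the convergence $P\phi_n\to P\phi_\infty$, one obtains
\begin{equation*}
\int_{B_\mu}\tfrac{4\pi}{\Phi''(\rho_\mu)}(\phi_n-P\phi_n)^2\,dx \;\longrightarrow\; \int_{B_\mu}\tfrac{4\pi}{\Phi''(\rho_\mu)}(\phi_\infty-P\phi_\infty)^2\,dx.
\end{equation*}
Combined with $\langle\tilde L_\mu\phi_n,\phi_n\rangle\to 0$, weak lower semicontinuity of $\|\nabla\cdot\|_{L^2}$, and the bound $\tilde L_\mu\geq 0$ (which forces $\|\nabla\phi_\infty\|_{L^2}^2\geq\int_{B_\mu}\tfrac{4\pi}{\Phi''(\rho_\mu)}(\phi_\infty-P\phi_\infty)^2\,dx=1$), this yields both $\|\nabla\phi_\infty\|_{L^2}=1$ and $\langle\tilde L_\mu\phi_\infty,\phi_\infty\rangle=0$. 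Nonnegativity of $\tilde L_\mu$ together with the vanishing of the quadratic form forces $\phi_\infty\in\ker\tilde L_\mu$, contradicting $\phi_\infty\in(\ker\tilde L_\mu)^\perp$ and $\phi_\infty\neq 0$.

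The main obstacle I anticipate is the passage to the limit in the lower-order term, which hinges on the asymptotic behavior of $\Phi''(\rho_\mu)$ near the free boundary $\{|x|=R_\mu\}$ dictated by \eqref{P2}: this is what makes $\tfrac{1}{\Phi''(\rho_\mu)}$ effectively compactly supported in $\overline{B_\mu}$ and so converts the weak $\dot H^1$ convergence into the strong $L^2(B_\mu)$ convergence needed to identify the limiting quadratic form. Without this boundary decay, the integrand would not be dominated in a way compatible with Rellich compactness, and the limiting step could fail at the boundary of $B_\mu$.
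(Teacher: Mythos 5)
Your overall structure — reduce to the quadratic form of $\tilde L_\mu$, establish $\tilde L_\mu\geq 0$ together with the identification of $\ker\tilde L_\mu$, then obtain coercivity on $(\ker\tilde L_\mu)^\perp$ from a compactness/contradiction argument — is exactly the paper's approach, and Steps 1 and 3 of your argument are sound (the paper phrases Step 3 as a compact embedding of $\dot H^1(\mathbb{R}^3)$ into the weighted space $W_\mu$ rather than as an explicit minimizing-sequence contradiction, but the content is the same).

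There is, however, a genuine gap in Step 2. To pass from $n^-(L_\mu)=i_\mu\leq 1$ to $n^-(L_\mu|_{Z_\mu})=0$ in the case $i_\mu=1$, you claim that the unique negative direction of $L_\mu$ is proportional to $\partial_\mu\rho_\mu$, citing Theorem 1.2(iii) of \cite{LZ2022}. That theorem, as it is invoked in Lemma \ref{kerLmu}, characterizes the radial solutions of $L_\mu\rho = \mathrm{const}$ as multiples of $\partial_\mu\rho_\mu$; it does not identify the negative eigendirection of $L_\mu$, which in general has no reason to satisfy $L_\mu\eta=\mathrm{const}$. The correct way to remove one negative direction by the mass constraint is the standard Lagrange-multiplier index count, which involves the sign of $\langle L_\mu^{-1}\mathbf{1},\mathbf{1}\rangle$, not a direct identification of the negative eigenvector. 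The paper sidesteps all of this by citing Theorem 2.1 of \cite{LZ2022}, which states $n^-(L_\mu|_{Z_\mu})=n^u(\mu)$ outright; combined with Lemma \ref{negative-zero-direction} this immediately yields $n^-(\tilde L_\mu)=0$. Your argument should use that theorem instead of attempting to re-derive it.

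A minor remark on your closing paragraph: you present the vanishing of $1/\Phi''(\rho_\mu)$ at $r=R_\mu^-$ as essential for the compactness step, but in fact only \emph{boundedness} of $1/\Phi''(\rho_\mu)$ on $B_\mu$ is used — the Rellich embedding $\dot H^1(\mathbb{R}^3)\hookrightarrow L^2(B_\mu)$ already gives strong $L^2(B_\mu)$ convergence, and a bounded multiplier preserves it. The vanishing at the boundary (from \eqref{P2} with $\gamma_0<2$) is a fact about this particular weight, but the argument would go through for any bounded weight.
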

\begin{proof}
If $n^u(\mu)=0$ and $M'(\mu)\neq0$,
then by Theorem 2.1 in \cite{LZ2022} we have
$n^-(L_\mu|_{Z_\mu})=n^u(\mu)=0$.
Thanks to Lemma \ref{negative-zero-direction}, one has
$n^-(\tilde{L}_\mu)=n^-(L_\mu|_{Z_\mu})=0$.
Let $W_\mu=\{\phi|\int_{B_{\mu}}{1\over \Phi''(\rho_{\mu})}(\phi-P\phi)^2dx<\infty\}$. Then $\dot{H}^1(\mathbb{R}^3)$  is embedded in $W_\mu$.
Let $\{\phi_n\}_{n=1}^\infty$ be a bounded sequence in  $\dot{H}^1(\mathbb{R}^3)$. By the compactness of $\dot{H}^1(\mathbb{R}^3)\hookrightarrow L^2(B_\mu)$, there exists $\phi_0\in L^2(B_\mu)$ such that $\phi_n\to\phi_0$ in  $L^2(B_\mu)$.
Then $\phi_n, \phi_0\in W_\mu$ and
\begin{align*}
\int_{B_{\mu}}{1\over \Phi''(\rho_{\mu})}(\phi_n-P\phi_n-\phi_0+P\phi_0)^2dx\leq \|\phi_n-\phi_0\|_{L^2(B_\mu)}^2\to0.
\end{align*}
Thus, $\dot{H}^1(\mathbb{R}^3)$  is compactly  embedded in $W_\mu$. For $\varphi\in \ker \tilde{L}_\mu$ and $\phi\in \dot{H}^1(\mathbb{R}^3)$, we have $(\phi,\varphi)_{W_\mu}=(\phi,\varphi)_{\dot{H}^1(\mathbb{R}^3)}$ due to $P\varphi=0$.
Then
\begin{align*}
\int_{B_{\mu}}{1\over \Phi''(\rho_{\mu})}(P\phi-\phi)^2 dx\leq \tilde C_0\|\nabla\phi\|_{L^2(\mathbb{R}^3)}^2,\quad \phi\in (\ker \tilde{L}_\mu)^\perp
\end{align*}
for some $0<\tilde C_0<1$. Then proves \eqref{positive bound}.
\end{proof}

\section{Conditional nonlinear orbital stability for general perturbations}
To remove the kernel of $\tilde L_\mu$, we choose the appropriate translations of the  perturbation of the
gravitational potential such that it is perpendicular to the  kernel functions.
\begin{lemma}\label{perplemma}
There exists $\delta_0>0$ such that for any $x_0\in\mathbb{R}^3$ and $(\rho,v)\in X$ with
$$d_4((\rho,v),(\rho_\mu(x+x_0),0))=\frac{1}{8\pi}\int_{\mathbb{R}^3}|\nabla V_{\text{in}}-\nabla V_{\mu}(x+x_0)|^2dx<\delta_0,$$ there exists $y_0\in \mathbb{R}^3$, depending continuously on $x_0\in\mathbb{R}^3$ and $\rho$, such that $ V_{\textup{in}}(x-y_0)-V_{\mu}\perp \partial_{x^i}V_{\mu}$ in $\dot{H}^1(\mathbb{R}^3)$ for $i=1,2,3,$ and $|x_0-y_0|\leq C\sqrt{\delta_0}$, where $V_{\textup{in}}=4\pi\Delta^{-1}\rho$.

\end{lemma}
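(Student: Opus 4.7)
The plan is to apply the implicit function theorem (IFT) to solve for $y_0$. Introduce, for $U\in\dot{H}^1(\mathbb{R}^3)$ and $y\in\mathbb{R}^3$, the map $G=(G_1,G_2,G_3)$ defined by
\begin{align*}
G_i(y,U):=\int_{\mathbb{R}^3}\nabla\bigl(U(x-y)-V_\mu(x)\bigr)\cdot\nabla\partial_{x^i}V_\mu(x)\,dx,
\end{align*}
so that the required orthogonality $V_{\text{in}}(\cdot-y_0)-V_\mu\perp\partial_{x^i}V_\mu$ in $\dot H^1$ is exactly $G(y_0,V_{\text{in}})=0$. Note $\partial_{x^i}V_\mu\in\dot H^1(\mathbb{R}^3)$ since $V_\mu\in C^2(B_\mu)$ and $V_\mu(r)=-M_\mu/r$ for $r\geq R_\mu$ (yielding decay $|\nabla\partial_{x^i}V_\mu|\sim r^{-3}$). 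At the reference point $(y,U)=(x_0,V_\mu(\cdot+x_0))$, i.e.\ the unperturbed star, we have $U(x-x_0)=V_\mu(x)$, and hence $G(x_0,V_\mu(\cdot+x_0))=0$.

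To apply IFT, I rewrite $G$ using $\Delta V_\mu=4\pi\rho_\mu$ and integration by parts:
\begin{align*}
G_i(y,U)=-4\pi\int_{\mathbb{R}^3}U(x-y)\,\partial_{x^i}\rho_\mu(x)\,dx\;-\;\int_{\mathbb{R}^3}\nabla V_\mu\cdot\nabla\partial_{x^i}V_\mu\,dx.
\end{align*}
Since $\partial_{x^i}\rho_\mu$ is compactly supported and $C^1$, a difference-quotient argument transfers the $y$-derivative onto $\rho_\mu$ without requiring smoothness of $U$, yielding
\begin{align*}
\partial_{y^j}G_i(y,U)=4\pi\int_{\mathbb{R}^3}U(x-y)\,\partial_{x^j}\partial_{x^i}\rho_\mu(x)\,dx,
\end{align*}
which is continuous in $(y,U)\in\mathbb{R}^3\times\dot{H}^1(\mathbb{R}^3)$ (use $\dot H^1\hookrightarrow L^6_{\text{loc}}$ and dominated convergence). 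Evaluating at $(x_0,V_\mu(\cdot+x_0))$ and integrating by parts gives
\begin{align*}
\partial_{y^j}G_i(x_0,V_\mu(\cdot+x_0))=-\int_{\mathbb{R}^3}\nabla\partial_{x^j}V_\mu\cdot\nabla\partial_{x^i}V_\mu\,dx=-c\,\delta_{ij},
\end{align*}
where $c=\|\partial_{x^1}V_\mu\|_{\dot H^1}^2>0$, the off-diagonal vanishing following from spherical symmetry of $V_\mu$. Thus the Jacobian is $-cI$, invertible, and the IFT produces a continuous solution $y_0$ of $G(y_0,V_{\text{in}})=0$ for $V_{\text{in}}$ in a $\dot H^1$-neighborhood of $V_\mu(\cdot+x_0)$, with continuous dependence on the parameters $x_0$ and $\rho$.

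For the quantitative estimate $|y_0-x_0|\leq C\sqrt{\delta_0}$: the Cauchy–Schwarz inequality together with the translation invariance of the $\dot H^1$-norm gives
\begin{align*}
|G_i(x_0,V_{\text{in}})|=\bigl|(V_{\text{in}}-V_\mu(\cdot+x_0),\,\partial_{x^i}V_\mu(\cdot+x_0))_{\dot H^1}\bigr|\leq\|\partial_{x^i}V_\mu\|_{\dot H^1}\sqrt{8\pi d_4}\leq C\sqrt{\delta_0}.
\end{align*}
Combining this with the uniform invertibility of $\partial_y G$ in a neighborhood (continuity of the second derivative) and a standard Newton/fixed-point estimate yields $|y_0-x_0|\leq C|G(x_0,V_{\text{in}})|\leq C\sqrt{\delta_0}$, provided $\delta_0$ is small enough.

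The main technical obstacle is the low regularity of $V_{\text{in}}$, since $\rho$ is only a finite-energy weak solution, so $V_{\text{in}}$ has no a priori smoothness beyond $\dot H^1$. The identity for $G_i$ above circumvents this by shifting all derivatives onto the smooth steady profile $\rho_\mu$; the remaining regularity question for $G$ in $(y,U)$ reduces to continuity of a convolution of $U\in\dot H^1$ with a compactly supported $C^1$ function, which is straightforward. The rest is the standard IFT machinery, with the radial symmetry of $V_\mu$ ensuring that the three kernel directions are genuinely independent and the Jacobian is nondegenerate.
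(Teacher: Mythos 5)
Your proposal is correct and follows essentially the same route as the paper: both define the same orthogonality map $f_i((\rho,v),y)=\int_{\mathbb{R}^3}\nabla(V_{\text{in}}(x-y)-V_{\mu}(x))\cdot\nabla\partial_{x^i}V_{\mu}(x)\,dx$, compute its $y$-Jacobian to be a nonzero multiple of the identity via spherical symmetry, invoke the implicit function theorem, and obtain the bound $|x_0-y_0|\leq C\sqrt{\delta_0}$ from the contraction/Newton estimate embedded in the IFT proof. (A small remark: your intermediate formula for $\partial_{y^j}G_i$ should carry a $-4\pi$ prefactor rather than $+4\pi$; the subsequent sign slip when integrating by parts cancels this, so the final Jacobian $-c\,\delta_{ij}$ and all conclusions are unaffected.)
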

\begin{proof}
For $x_0=0$, we define the map $f=(f_1,f_2,f_3): X\times \mathbb{R}^3\to \mathbb{R}^3$ by
\begin{align*}
f_i((\rho,v),y)=&\int_{\mathbb{R}^3}\nabla(V_{\text{in}}(x)-V_{\mu}(x+y))\cdot\nabla\partial_{x^i}V_{\mu}(x+y)dx\\
=&\int_{\mathbb{R}^3}\nabla(V_{\text{in}}(x-y)-V_{\mu}(x))\cdot\nabla\partial_{x^i}V_{\mu}(x)dx,\quad i=1,2,3.
\end{align*}
Then $f((\rho_{\mu},0),0)=0$.
By a direct computation, we have
\begin{align*}
\int_{\mathbb{R}^3}\nabla \partial_{x^i}V_\mu(|x|)\cdot \nabla \partial_{x^j}V_\mu(|x|) dx=0,
\end{align*}
for any $i\neq j $. Hence, we obtain that
\begin{align*}
&\frac{\partial (f_1,f_2,f_3)}{\partial(y^1,y^2,y^3)}\bigg|_{(\rho,v)=(\rho_{\mu},0),y=0}
=\\
&\left|
\begin{array}{ccc}
-\int_{\mathbb{R}^3}|\nabla \partial_{x^1}V_\mu(|x|)|^2 dx & 0 & 0 \\
0 & -\int_{\mathbb{R}^3}|\nabla \partial_{x^2}V_\mu(|x|)|^2 dx & 0 \\
0 & 0 & -\int_{\mathbb{R}^3}|\nabla \partial_{x^3}V_\mu(|x|)|^2 dx \\
\end{array}
\right|\neq 0.
\end{align*}
By the Implicit Function Theorem,  there exists $\delta>0$ such that for $(\rho,v)\in X$ with
$d_4((\rho,v),(\rho_\mu,0))<\delta$, one can find $y_0\in\mathbb{R}^3$ such that $f((\rho,v),y_0)=0$ and $y_0$  depends continuously on $(\rho,v)$.
The proofs of $|x_0-y_0|\leq C\sqrt{\delta}$ is due to the contraction of the map $T$ defined by
\begin{align*}
(T\eta)(\rho,v)=\eta(\rho,v)-\left(\frac{\partial (f_1,f_2,f_3)}{\partial(y^1,y^2,y^3)}\bigg|_{(\rho,v)=(\rho_{\mu},0),y=0}\right)^{-1}f((\rho,v),\eta(\rho,v)),
\end{align*}
where $\eta\in C(\bar{B}_{d_4}((\rho_{\mu},0),\delta),\mathbb{R}^3)$.

Finally, we consider the case $x_0\neq 0$.  For $(\rho,v)\in X$ with $d_4((\rho,v),(\rho_\mu(x+x_0),0))<\delta$, we have $d_4(( \rho(x-x_0),v),(\rho_\mu,0))<\delta$. Thus, there exists $\tilde y_0\in\mathbb{R}^3$ such that $f((\rho,v),x_0+\tilde y_0)=f((\rho(x-x_0),v),\tilde y_0)=0$.
\end{proof}

Now, we are in a position to prove  the conditional nonlinear orbital stability of the non-rotating stars for general perturbations.

\begin{proof}[Proof of Theorem \ref{main}]
For any $\ep>0$, let $\delta<\min\{{\ep\over 2C_1}, {\delta_0\over 2},1\}$, where $C_1>1$ and $\delta_0>0$ are given in \eqref{p-estimate} and   Lemma \ref{perplemma}, respectively.
For the initial data $(\rho(0),v(0))$ satisfying \eqref{initial data-1}, we choose $x_0(0)\in\mathbb{R}^3$ such that
\begin{align}\label{inf-tran}
d(({\rho}(0),{v}(0)),(\rho_\mu(x+x_0(0)),0))+|{M}-M_{\mu}|^q<\delta.
\end{align}

For $t\geq0$, we prove that if there exists $x_0(t)\in\mathbb{R}^3$ satisfying $d(({\rho}(t),{v}(t)),(\rho_\mu(x+x_0(t)),0))<\delta_0$, then
there exists $y_0(t)\in\mathbb{R}^3$ such that
 \begin{align}\label{pr-estimate}
 d(({\rho}(t),{v}(t)),(\rho_\mu(x+y_0(t)),0))<\epsilon.
 \end{align}
  Indeed, by Lemma  \ref{perplemma}, there exists $y_0(t)\in\mathbb{R}^3$ such that $ V_{\textup{in}}(x-y_0(t))-V_{\mu}\perp \ker\left(\tilde L_\mu\right)$ in $\dot{H}^1(\mathbb{R}^3)$, $y_0(t)$ depends continuously on $t$ and $|x_0(t)-y_0(t)|<C\sqrt{\delta_0}$. For $t=0$, $y_0(0)$ can be chosen such that  $|x_0(0)-y_0(0)|<C\sqrt{\delta}$ by \eqref{inf-tran}.
Then $$d((\rho_\mu(x+x_0(0)),0),(\rho_\mu(x+y_0(0)),0))<{\ep\over 2C_1}$$ by choosing $\delta>0$ small enough and
\begin{align}\label{ker-tran}
&d(({\rho}(0),{v}(0)),(\rho_\mu(x+y_0(0)),0))+|{M}-M_{\mu}|^q
\leq d(({\rho}(0),{v}(0)),(\rho_\mu(x+x_0(0)),0))\\\nonumber
&+d((\rho_\mu(x+x_0(0)),0),(\rho_\mu(x+y_0(0)),0))+|{M}-M_{\mu}|^q<{\ep\over C_1}.
\end{align}

By Lemma \ref{B0positive},
$
 \langle B''(0)\phi,\phi\rangle\geq C_0\|\nabla\phi\|^2_{L^2(\mathbb{R}^3)}$ for  $\phi\in (\ker \tilde{L}_\mu)^\perp$, where $C_0>0$.
Then by  \eqref{H-rho-vHrhomu0}, \eqref{ker-tran}, \eqref{d2-d4-estimate},  \eqref{dual functional expression} and Lemmas \ref{high order term EC functional}, \ref{c2dual functional},
\ref{B0positive}, we have
\begin{align}\label{d0dt-estimate}
&Cd(({\rho}(0),{v}(0)),(\rho_\mu(x+y_0(0)),0))\\\nonumber
\geq&H(\rho(0),v(0))-H(\rho_{\mu}(x+y_0(0)),0)\geq H(\rho_{tran}(t),v(t))-H(\rho_{\mu},0)\\\nonumber
\geq& \sum_{i=1,3,5}d_{i,tran}(t)+\tau(d_{2,tran}(t)-d_{4,tran}(t))\\\nonumber
&+(1-\tau)(d_{2,tran}(t)-d_{4,tran}(t))-o(d_{tran}(t))\\\nonumber
\geq &\sum_{i=1,3,5}d_{i,tran}(t)+\tau(d_{2,tran}(t)-d_{4,tran}(t))+(1-\tau)\bigg( B(\tilde{V}_{\text{tran,in}}(t))\\\nonumber
&-C d_{tran}(t)^{\frac{1}{2}}(|M-M_{\mu}|+d_{tran}(t)^{1\over\gamma_{0}}
+d_{tran}(t)^{1\over\gamma_1}+d_{tran}(t))\bigg)-o(d_{tran}(t))\\\nonumber
\geq& \sum_{i=1,3,5}d_{i,tran}(t)+\tau(d_{2,tran}(t)-d_{4,tran}(t))\\\nonumber
&+(1-\tau)B(\tilde{V}_{\text{tran,in}}(t))
 -C |M-M_{\mu}|^q-o(d_{tran}(t))\\\nonumber
 =& \sum_{i=1,3,5}d_{i,tran}(t)+\tau(d_{2,tran}(t)-d_{4,tran}(t))\\\nonumber
&+(1-\tau)(\langle B''(0)\tilde{V}_{\text{tran,in}}(t),\tilde{V}_{\text{tran,in}}(t)\rangle+o(d_{4,tran}(t)))
 -C |M-M_{\mu}|^q-o(d_{tran}(t))\\\nonumber
 \geq& \sum_{i=1,3,5}d_{i,tran}(t)+\tau d_{2,tran}(t)
+( (1-\tau)C_0-\tau)d_{4,tran}(t)
 -C |M-M_{\mu}|^q-o(d_{tran}(t))\\\nonumber
 \geq&\tau d_{tran}(t)
 -C |M-M_{\mu}|^q-o(d_{tran}(t))
\end{align}
for $\tau>0$ small enough,
where $$d_{tran}(t)=d((\rho_{tran}(t),v(t)),(\rho_\mu,0)),\quad d_{i,tran}(t)=d_i((\rho_{tran}(t),v(t)),(\rho_\mu,0))$$
 for $i=1,\cdots,5$, 
 $$\rho_{tran}(t)=\rho(t,x-y_0(t)), \quad \tilde{V}_{\text{tran,in}}(t)=V_{\text{tran,in}}(t)-V_{\mu},$$ and $1<q<2$. Then by \eqref{ker-tran}, we have
\begin{align}\label{p-estimate}
 d(({\rho}(t),{v}(t)),(\rho_\mu(x+y_0(t)),0))=
d_{tran}(t)\leq C_1 (d_{tran}(0)+|M-M_{\mu}|^q)<\ep,
\end{align}
where we used the continuity of $d_{tran}(t)$ on $t$. This proves \eqref{pr-estimate}.

Let $\ep<\delta_0$. If there exists $t_0>0$ such that
$\inf_{y\in \mathbb{R}^3}d((\rho(t_0),v(t_0)),(\rho_\mu(x+y),0))=\ep$, then there exists $x_0(t_0)\in\mathbb{R}^3$ such that $\ep\leq d((\rho(t_0),v(t_0)),(\rho_\mu(x+x_0(t_0)),0))<\delta_0$. By \eqref{pr-estimate}, there exists $y_0(t_0)\in\mathbb{R}^3$ such that
$ d(({\rho}(t_0),{v}(t_0)),(\rho_\mu(x+y_0(t_0)),0))<\epsilon,
$ which is a contradiction.
\end{proof}


\section{Unconditional nonlinear stability of non-rotating star for spherically symmetric perturbations}
In this section, we prove the unconditional nonlinear stability of the non-rotating stars for a class of general pressure laws satisfying (C1)-(C3) under spherically symmetric perturbations.

When the perturbation is restricted to be spherically symmetric, the translation kernel of $\tilde L_\mu$ in Lemma \ref{kerLmu} can be removed.

\begin{lemma}\label{trivial kernel}
Suppose that $P$ satisfies (C1)-(C3). For $\mu\in (0,\mu_{max})$ and
any non-rotating stars $(\rho_\mu,0)$ satisfying  $n^u(\mu)=0$ and $M'(\mu)\neq0$, then we have
\begin{align*}
 \ker(\tilde L_\mu^r)=\{0\},
\end{align*}
and
\begin{align}\label{positive bound-radial}
 \langle B''(0)\phi,\phi\rangle\geq C\|\nabla\phi\|^2_{L^2(\mathbb{R}^3)},\quad \phi\in \dot{H}_r^1,
 \end{align}
where $\tilde L_\mu^r=\tilde L_\mu|_{\dot{H}_r^1}$ and $\dot{H}_r^1=\{f(x)\in\dot{H}^1(\mathbb{R}^3)| f(x)=f(|x|)\}$.
\end{lemma}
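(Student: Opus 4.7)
The plan is to reduce both assertions to Lemma \ref{kerLmu} and Lemma \ref{B0positive} by exploiting the simple observation that every element of $\ker(\tilde L_\mu)$ is strictly non-radial, so it has no nontrivial intersection with $\dot H^1_r$ and is in fact $\dot H^1$-orthogonal to $\dot H^1_r$.

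First I would verify the orthogonality $\dot H^1_r \perp \ker(\tilde L_\mu)$ in $\dot H^1(\mathbb{R}^3)$. For $\phi \in \dot H^1_r$ and $i\in\{1,2,3\}$, integration by parts together with $\Delta V_\mu = 4\pi \rho_\mu$ gives
\begin{align*}
\int_{\mathbb{R}^3} \nabla \phi \cdot \nabla \partial_{x^i} V_\mu \, dx
= -\int_{\mathbb{R}^3} \phi\, \Delta \partial_{x^i} V_\mu \, dx
= -4\pi \int_{\mathbb{R}^3} \phi\, \partial_{x^i} \rho_\mu \, dx.
\end{align*}
Since $\phi$ and $\rho_\mu$ are both radial, $\phi(x)\,\partial_{x^i}\rho_\mu(x) = \phi(r)\rho_\mu'(r)\tfrac{x^i}{r}$ is odd in $x^i$, so the integral vanishes. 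Hence $\dot H^1_r \subset (\ker \tilde L_\mu)^\perp$.

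For the first claim, if $\phi \in \ker(\tilde L_\mu^r)$ then $\phi \in \dot H^1_r \cap \ker(\tilde L_\mu)$. Lemma \ref{kerLmu} applies because $M'(\mu)\neq 0$, so $\phi = \sum_{i=1}^{3} c_i\,\partial_{x^i} V_\mu$ for some constants $c_i$. Each $\partial_{x^i}V_\mu = V_\mu'(r)\tfrac{x^i}{r}$ is odd in $x^i$ and even in the other two variables, so these three functions are linearly independent modulo any radial function, and the only radial element of their span is $0$; thus $\phi = 0$ and $\ker(\tilde L_\mu^r) = \{0\}$.

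For the coercivity \eqref{positive bound-radial}, the inclusion $\dot H^1_r \subset (\ker\tilde L_\mu)^\perp$ just established means that Lemma \ref{B0positive} (which applies because $n^u(\mu) = 0$ and $M'(\mu)\neq 0$) can be invoked directly on every $\phi \in \dot H^1_r$, yielding
\begin{align*}
\langle B''(0)\phi,\phi\rangle \geq C_0 \|\nabla \phi\|_{L^2(\mathbb{R}^3)}^2
\end{align*}
with the same constant $C_0$ supplied by Lemma \ref{B0positive}. There is no real obstacle here: the entire content of the lemma is that the three translational zero modes of $\tilde L_\mu$ are ruled out automatically by spherical symmetry, so the indefinite kernel that complicated the general case disappears and the coercivity is inherited without modification.
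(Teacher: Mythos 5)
Your proof is correct and follows essentially the same route as the paper: use Lemma \ref{kerLmu} to identify $\ker(\tilde L_\mu)$ with the span of $\partial_{x^i}V_\mu$, observe that these translational modes are $\dot H^1$-orthogonal to $\dot H^1_r$, and then read off both $\ker(\tilde L_\mu^r)=\{0\}$ and the coercivity \eqref{positive bound-radial} from Lemma \ref{B0positive}. The only difference is that the paper asserts $\partial_{x^i}V_\mu\perp\dot H^1_r$ without proof, whereas you supply the short integration-by-parts/parity argument; otherwise the two proofs coincide.
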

\begin{proof}
By Lemma \ref{kerLmu},  $\ker\left(\tilde L_\mu\right)=\{\partial_{x_i}V_{\mu}, i=1,2,3\}.$
Since $\partial_{x_i}V_{\mu}\perp \dot{H}_r^1$ in $\dot{H}^1(\mathbb{R}^3)$, we have $\ker(\tilde L_\mu^r)=\{0\}$ and
 \eqref{positive bound-radial} holds  by Lemma \ref{B0positive}.
\end{proof}

Now, we give the proof of Theorem \ref{unconditionalTHM}.

\if0
For polytropic gaseous stars, the global existence of  finite-energy solutions to the CEP equations  with spherically symmetric
initial data is proved in \cite{CHWY2021}. The existence is proved by first constructing approximate solutions via  free boundary problem  for the  compressible Navier-Stokes-Poisson (CNSP) equations,
 then taking the  limit to obtain global weak solutions to the CNSP equations, and finally take the inviscid  limit to obtain global weak solutions to the  CEP equations. Later, the global existence of finite-energy solutions to the CEP with general pressure law, especially including the
constitutive equation of white dwarf stars, is proved in \cite{CHWW2023}.
\fi

\if0
\begin{lemma}\label{existence of  solutions to the  compressible  Euler-Poisson equations}

(1) (Existence of global weak solutions to  the CNSP equations):
If the spherically symmetric approximate initial data
\begin{align}\label{initial data-approximate}
 (\rho^{\varepsilon}(0),\rho^{\varepsilon}(0)v^\varepsilon(0),V^\varepsilon(0)),\quad 0<\varepsilon\ll1,
\end{align}
(constructed in (A.8), (A.27) and (A.12)  of \cite{CHWY2021}) is given by (2.17)-(2.18) in \cite{CHWW2023} with \\ $(\rho^{\varepsilon}(0),m^{\varepsilon}(0),V^\varepsilon(0))$ satisfying (2.9)-(2.10) in \cite{CHWW2023}
where $(\rho^{\varepsilon}v^\varepsilon)(t,r)=m^\varepsilon(t,r){x\over r}$, then
\begin{align}\nonumber
&\rho^{\varepsilon}(0)\to\rho(0) \quad \text{in} \quad L^{1}\cap L^{\gamma_1}(\mathbb{R}^3),\\\nonumber 
&V^\varepsilon(0)\to V(0)\quad \text{in}\quad \dot{H}^1(\mathbb{R}^3),\\
&E_0^\varepsilon\to E_0,\label{first two part of energy}
\end{align}
as $\varepsilon\to0$, $\int_{\mathbb{R}^3}\rho^\varepsilon(0)dx=M$,
 and moreover,
for $\varepsilon>0$ small enough, there exists a globally spherically symmetric weak solution $(\rho^\varepsilon, v^\varepsilon, V^\varepsilon)$ to \eqref{NSP}-\eqref{initial data-approximate} in the sense of Definition 2.1 of \cite{CHWW2023},
where
$E_0^\varepsilon:=\int_{\mathbb{R}^3}\left(\frac{1}{2}\rho^\varepsilon(0) (v^\varepsilon(0))^2+\Phi(\rho^\varepsilon(0))\right) dx$. 
\if0
 Moreover,
\begin{align}\label{energy decrease}
E(\rho^\varepsilon(t),v^\varepsilon(t))\leq E(\rho^\varepsilon(0),v^\varepsilon(0)),\quad t\geq0.
\end{align}
\fi

(2) (Inviscid limit and existence of global weak solutions to the CEP system):
For the global weak solutions $(\rho^\varepsilon, v^\varepsilon, V^\varepsilon)$ to \eqref{NSP}-\eqref{initial data-approximate} in (1), there exists both a subsequence (which is still denoted by) $(\rho^\varepsilon, v^\varepsilon, V^\varepsilon)$ and a  spherically symmetric vector function $(\rho,v,V)$ such that
\begin{align*}
(\rho^\varepsilon, m^\varepsilon)(t,r)\to(\rho,m)(t,r)\quad\text{in}\quad L_{loc}^{q_1}(\mathbb{R}_+^2)\times L_{loc}^{q_2}(\mathbb{R}_+^2),
\end{align*}
with $q_1\in[1,\gamma_1 +1)$ and $q_2\in[1,{3(\gamma_1+1)\over \gamma_1 +3})$, and
\begin{align*}
&V^\varepsilon\rightharpoonup V\quad\text{in}\quad L^2 ([0,T], H_{loc}^1(\mathbb{R}^3)),\\
&\int_0^\infty|V_r^\varepsilon(t,r)-V_r(t,r)|^2r^2dr\to 0 \quad \text{if} \quad \gamma_1 >{6\over5}
\end{align*}
as $\varepsilon\to0$, where $(\rho v)(t,r)=m(t,r){x\over r}$. Moreover,
$(\rho,v,V)$ is  a globally spherically symmetric  finite-energy solution  of the CEP system \eqref{EP}-\eqref{inaprx} with the initial data  $(\rho(0),v(0), V(0))$  in the sense of Definition \ref{weaksol}.
\end{lemma}
\fi

\if0
In \cite{CHWY2021,CHWW2023}, the authors constructed the following approximate smooth solutions $(\rho^{\varepsilon,b},v^{\varepsilon,b})$  for the CNSP systems. The existence of   global weak solutions for the CNSP system in Lemma \ref{existence of  solutions to the  compressible  Euler-Poisson equations} (1) is obtained by taking the limits $b\to\infty$.
\begin{lemma}\label{approximate smooth solutions}
Let the initial data $(\rho^{\varepsilon,b}(0),v^{\varepsilon,b}(0))$ be the smooth functions defined in (A.41) of \cite{CHWY2021}.
Then $\int_{\mathbb{R}^3}\rho^{\varepsilon,b}(0)dx=M,$
\begin{align}\label{initial-rho-approximate-cnsp-b}
&\rho^{\varepsilon,b}(0)\to\rho^\varepsilon(0) \quad \text{in} \quad L^{1}\cap L^{\gamma_1}([0,b],r^2dr),\\
\label{initial-approximate-cnsp}
&\quad E_0^{\varepsilon,b}\to E_0^{\varepsilon},
\end{align}
as $b\to\infty$,
 and
there exists a global smooth solution $(\rho^{\varepsilon,b},v^{\varepsilon,b})$ to the approximate free boundary problem for CNSP system  \eqref{NSP}
 where $(t,r)\in \Omega_T=\{(t,r): a\leq r\leq b(t),0\leq t\leq T\}$  for any $T>0$, $\{r=b(t):0<t\leq T\}$ is a free boundary determined by $b'(t)=u(t,b(t))$ for $t>0$ and $b(0)=b$, $u{x\over r}=v$, $a=b^{-1}$, the stress-free boundary condition $(p(\rho)-\varepsilon\rho(u_r+{2\over r}u))(t,b(t))=0$  is posed on the free boundary $r=b(t)$ for $t>0$, $u=0$ is posed on the fixed boundary $r=a$, and $E_0^{\varepsilon,b}:=\int_{\mathbb{R}^3}\left(\frac{1}{2}\rho^{\varepsilon,b}(0) (v^{\varepsilon,b}(0))^2+\Phi(\rho^{\varepsilon,b}(0))\right) dx$.
 Moreover,
 \begin{align}\label{energy decrease approximate}
&E(\rho^{\varepsilon,b}(t),v^{\varepsilon,b}(t))\leq E(\rho^{\varepsilon,b}(0),v^{\varepsilon,b}(0)),\quad t\geq0.
\end{align}
\end{lemma}
\fi

\begin{proof} [Proof of Theorem \ref{unconditionalTHM}] We divide the proof into $5$ steps.

Step $1$. Construction of the approximate initial data.

For the  spherically symmetric initial data $(\rho(0),v(0), V(0))$,  one can construct a sequence of approximate initial data $(\rho^{\varepsilon}(0), v^{\varepsilon}(0), V^\varepsilon(0)), 0<\varepsilon\ll1,$  by  a similar argument  to (A.8), (A.27) and (A.12)   of \cite{CHWY2021}.
Moreover, $(\rho^{\varepsilon}(0), v^{\varepsilon}(0), V^\varepsilon(0))$ satisfies
\begin{align}\nonumber
&\rho^{\varepsilon}(0)\to\rho(0) \quad \text{in} \quad L^{1}\cap L^{\gamma_1}(\mathbb{R}^3),\\\nonumber 
&V^\varepsilon(0)\to V(0)\quad \text{in}\quad \dot{H}^1(\mathbb{R}^3),\\
&E_0^\varepsilon\to E_0\quad \text{as}\quad\varepsilon\to0,\label{first two part of energy}
\end{align}
 and $\int_{\mathbb{R}^3}\rho^\varepsilon(0)dx=M$, where $E_0^\varepsilon:=\int_{\mathbb{R}^3}\left(\frac{1}{2}\rho^\varepsilon(0) (v^\varepsilon(0))^2+\Phi(\rho^\varepsilon(0))\right) dx$.
For  the  approximate initial data $(\rho^\varepsilon(0),v^\varepsilon(0), V^\varepsilon(0))$, we  can construct a sequence of smooth initial data $(\rho^{\varepsilon,b}(0),v^{\varepsilon,b}(0),V^{\varepsilon,b}(0))$
by  a similar argument  to (A.41) and (A.25) of \cite{CHWY2021}. Moreover, $(\rho^{\varepsilon,b}(0),v^{\varepsilon,b}(0),V^{\varepsilon,b}(0))$ satisfies
\begin{align}\label{initial-rho-approximate-cnsp-b}
&\rho^{\varepsilon,b}(0)\to\rho^\varepsilon(0) \quad \text{in} \quad L^{1}\cap L^{\gamma_1}(B(0,b)),\\
\label{initial-approximate-cnsp}
&\quad E_0^{\varepsilon,b}\to E_0^{\varepsilon} \quad\text{as}\quad b\to\infty,
\end{align}
 and $\int_{\mathbb{R}^3}\rho^{\varepsilon,b}(0)dx=M,$
where $E_0^{\varepsilon,b}:=\int_{\mathbb{R}^3}\left(\frac{1}{2}\rho^{\varepsilon,b}(0) (v^{\varepsilon,b}(0))^2+\Phi(\rho^{\varepsilon,b}(0))\right) dx$.

Step $2$. Estimates from nonlinear stability of the approximate solutions $(\rho^{\varepsilon,b}, v^{\varepsilon,b})$  to
 the approximate free boundary value problems for
 the  compressible Navier-Stokes-Poisson (CNSP) systems.

The free boundary value problem for the CNSP system is
\begin{align}\label{NSP}
\begin{cases}
\partial_t\rho + \text{div}(\rho v) = 0,\\
\partial_t(\rho v)+\text{div}(\rho v\otimes v)+\nabla p=-\rho\nabla V+\varepsilon div(\rho D(v)),\\
\Delta V=4\pi\rho,\;\;\lim_{|x|\to\infty}V(t,x)=0,
\end{cases}
\end{align}
where $D(v)=\frac{1}{2}(\nabla v+(\nabla v)^\perp)$ is the stress tensor,  $\varepsilon>0$ is the inverse of the Reynolds number, $(t,r)\in \Omega_T=\{(t,r): a\leq r\leq b(t),0\leq t\leq T\}$  for any $T>0$, $\{r=b(t):0<t\leq T\}$ is a free boundary determined by $b'(t)=u(t,b(t))$ for $t>0$ and $b(0)=b$, $u{x\over r}=v$, $a=b^{-1}$. The stress-free boundary condition $(p(\rho)-\varepsilon\rho(u_r+{2\over r}u))(t,b(t))=0$  is posed on the free boundary $r=b(t)$ for $t>0$, and $u=0$ is posed on the fixed boundary $r=a$.

Let $T>0$ and  $(\rho^{\varepsilon,b}(t), v^{\varepsilon,b}(t),V^{\varepsilon,b}(t))$ be a regular solution to the free boundary value problem \eqref{NSP} for $t\in[0,T]$. Here,
$(\rho^{\varepsilon,b}(t), v^{\varepsilon,b}(t))$ is understood as a solution on $\mathbb{R}^3$ by zero extension outside  $\{x|a\leq |x|\leq b(t)\}$ for $t\in[0,T]$, and
the potential $V^{\varepsilon,b}(t)$ is given by
$  V^{\varepsilon,b}(t)=4\pi\Delta^{-1}\rho^{\varepsilon,b}(t)$ for $t\in[0,T]$.
By
 \begin{align*}
&E(\rho^{\varepsilon,b}(t),v^{\varepsilon,b}(t))\leq E(\rho^{\varepsilon,b}(0),v^{\varepsilon,b}(0)),\quad t\in[0,T],
\end{align*} and the conversation of mass, we have
\begin{align}\label{ec-decrease} H(\rho^{\varepsilon,b}(t),v^{\varepsilon,b}(t))\leq H(\rho^{\varepsilon,b}(0),v^{\varepsilon,b}(0)),
\end{align}
where $H$ is defined in \eqref{Energy-Casimir functional}.
  Note that $\int_{\mathbb{R}^3}\rho^{\varepsilon,b}(0) dx=\int_{\mathbb{R}^3}\rho(0) dx={M}$. Denote $d_{\varepsilon,b}(t)=d((\rho^{\varepsilon,b}(t),v^{\varepsilon,b}(t)),(\rho_\mu,0))$, $t\in[0,T]$ for simplicity.
There exists  $\delta_1\in(0, 1)$ such that if
\begin{align*}
d_{\varepsilon,b}(0)+|{M}-M_{\mu}|^q<\delta_1,
\end{align*}
then by \eqref{ec-decrease} and Lemma \ref{trivial kernel},  a similar but simpler argument than \eqref{d0dt-estimate} implies
\begin{align}\label{symmetric-estimate}
&Cd_{\varepsilon,b}(0)
\geq H(\rho^{\varepsilon,b}(0),v^{\varepsilon,b}(0))-H(\rho_{\mu},0)\geq H(\rho^{\varepsilon,b}(t),v^{\varepsilon,b}(t))-H(\rho_{\mu},0)\\\nonumber
 \geq&\tau d_{\varepsilon,b}(t)
 -C |M-M_{\mu}|^q-o(d_{\varepsilon,b}(t))
\end{align}
for $\tau>0$ small enough.
Thus,
\begin{align}\label{symmetric-p-estimate}
 d(({\rho}^{\varepsilon,b}(t),{v}^{\varepsilon,b}(t)),(\rho_\mu,0))=
d_{\varepsilon,b}(t)\leq C_1 (d_{\varepsilon,b}(0)+|M-M_{\mu}|^q),\quad t\in[0,T],
\end{align}
where we used the continuity of $d_{\varepsilon,b}(t)$ on $t\in[0,T]$. The proof of
\eqref{symmetric-estimate} is simpler than \eqref{d0dt-estimate} since
the kernel of
$
\tilde L_\mu^r
$ is trivial
and
 $ \langle B''(0)\cdot,\cdot\rangle$ is positive on $\dot{H}_r^1$ by Lemma \ref{trivial kernel}. Thus,
 we do not need to remove the  kernel by translations.

Step $3$. Uniform estimates and global existence of finite-energy weak solutions to the CEP systems.

The local existence of the unique  classical solution to the approximate free boundary value problem for
 the  CNSP system \eqref{NSP}
could  be shown by a standard argument as in \cite{JXZ2005}. By \eqref{symmetric-p-estimate}, we have the uniform energy estimate
\begin{align}\label{symmetric-p-estimate2}
\int_{\mathbb{R}^3}\Phi(\rho^{\varepsilon,b}(t))dx\leq C_1 (d_{\varepsilon,b}(0)+|M-M_{\mu}|^q)\leq C_1\delta_1
\end{align}
for the regular solutions $(\rho^{\varepsilon,b}(t), v^{\varepsilon,b}(t))$, $t\in[0,T]$, to the free boundary value problems \eqref{NSP}. Then we get  similar
 a priori estimates as established in Lemmas 3.3-3.12 of \cite{DL2015}. A continuity argument implies that the smooth solution $(\rho^{\varepsilon,b}, v^{\varepsilon,b})$  exists globally in time.
Thanks to \eqref{symmetric-p-estimate2}, we can establish the same uniform estimates, independent of $b$,  as in Section 5 of \cite{CHWW2023}. Then using the same compactness arguments in \cite{CHWW2023} (see also \cite{CHWY2021,CW2022}), we can take the limit $b\to\infty$ to obtain the global weak solutions $(\rho^{\varepsilon}, v^{\varepsilon})$ of the
 CNSP system \eqref{NSP} with the initial data $(\rho^\varepsilon(0),v^\varepsilon(0))$. Finally, by the $L^p$ compensated  compactness framework established in \cite{CHWW2023}, we can take the inviscid limit $\varepsilon\to0$ and obtain a global  finite-energy weak solution $(\rho, v)$ to the CEP system \eqref{EP}-\eqref{inaprx} with the initial data $(\rho(0),v(0), V(0))$.
More precisely,
there exist both a subsequence of (which is still denoted by) $(\rho^\varepsilon, v^\varepsilon, V^\varepsilon)$ and a  spherically symmetric vector function $(\rho,v,V)$ such that
\begin{align}\label{rho-convergence22}
(\rho^\varepsilon, m^\varepsilon)(t,r)\to(\rho,m)(t,r)\quad\text{in}\quad L_{loc}^{q_1}(\mathbb{R}_+^2)\times L_{loc}^{q_2}(\mathbb{R}_+^2),
\end{align}
with $q_1\in[1,\gamma_1 +1)$, $q_2\in[1,{3(\gamma_1+1)\over \gamma_1 +3})$, and
\begin{align*}
&V^\varepsilon\rightharpoonup V\quad\text{in}\quad L^2 ([0,T], H_{loc}^1(\mathbb{R}^3)),\\
&\int_0^\infty|V_r^\varepsilon(t,r)-V_r(t,r)|^2r^2dr\to 0 \quad \text{as} \quad \varepsilon\to0,
\end{align*}
 where $(\rho v)(t,r)=m(t,r){x\over r}$. Moreover,
$(\rho,v,V)$ is  a globally spherically symmetric  finite-energy weak solution  of the CEP system \eqref{EP}-\eqref{inaprx} with the initial data  $(\rho(0),v(0), V(0))$  in the sense of Definition \ref{weaksol}.

Step $4$. Prove the nonlinear stability for the global weak solutions $(\rho^{\varepsilon}, v^{\varepsilon})$  of the  CNSP systems \eqref{NSP}.
Let   $d_{\varepsilon}(t)=d((\rho^{\varepsilon}(t),v^{\varepsilon}(t)),(\rho_\mu,0))$, $t\geq0$. The mass satisfies $\int_{\mathbb{R}^3}\rho^{\varepsilon}(0) dx={M}$.
Let
$d_{\varepsilon}(0)+|{M}-M_{\mu}|^q<{1\over3}\delta_1$. Now, fix $\varepsilon$. For any $\kappa>0$, there exists $b_1>0$ such that $\int_{b}^\infty |\rho^\varepsilon(0)|^s r^2dr<{\kappa\over2}$ and $\int_0^b |\rho^{\varepsilon,b}(0)-\rho^\varepsilon(0)|^sr^2dr<{\kappa\over2}$ for $b>b_1$, where $s=1,\gamma_1$ and
we used $\eqref{initial-rho-approximate-cnsp-b}$. Thus,
$
\int_0^\infty |\rho^{\varepsilon,b}(0)-\rho^\varepsilon(0)|^sr^2dr<\kappa$ for $b>b_1$. This implies that
\begin{align}\label{rho-varepsilon-b-tend infty}
\rho^{\varepsilon,b}(0)\to\rho^\varepsilon(0) \quad \text{in} \quad L^{1}\cap L^{\gamma_1}(\mathbb{R}^3)
\end{align}
as $b\to \infty$.
 Note that
\begin{align}\label{initial-potential-in-approximate-cnsp-b}
&{1\over4\pi}\int_{\mathbb{R}^3}|\nabla V_{\text{in}}^{\varepsilon,b}(0)-\nabla V_{\text{in}}^{\varepsilon}(0)|^2dx
=-\int_{\mathbb{R}^3}( V_{\text{in}}^{\varepsilon,b}(0)- V_{\text{in}}^{\varepsilon}(0))(\rho_{\text{in}}^{\varepsilon,b}(0)-\rho_{\text{in}}^{\varepsilon}(0))dx\\\nonumber
\leq& \|V_{\text{in}}^{\varepsilon,b}(0)- V_{\text{in}}^{\varepsilon}(0)\|_{L^6(\mathbb{R}^3)}\|\rho_{\text{in}}^{\varepsilon,b}(0)-\rho_{\text{in}}^{\varepsilon}(0)\|_{L^{6/5}(\mathbb{R}^3)}\\\nonumber
\leq&C \|\rho^{\varepsilon,b}(0)-\rho^{\varepsilon}(0)\|_{L^{6/5}(B_{\mu})}^2\to0
 \end{align}
 as $b\to\infty$.
 Since $|\Phi'(\rho_\mu)|\leq C_{\mu}$ on $B_{\mu}$ and $|V_\mu-V_\mu(R_{\mu})|\leq 2|V_\mu(R_{\mu})|\leq  C_{\mu}$ on $B_{\mu}^c$, by  \eqref{initial-approximate-cnsp}, \eqref{rho-varepsilon-b-tend infty} and \eqref{initial-potential-in-approximate-cnsp-b} we have
 \begin{align*}
 &{1\over2}\int_{\mathbb{R}^3}(\rho^{\varepsilon,b}(0)|v^{\varepsilon,b}(0)|^2-\rho^{\varepsilon}(0)|v^{\varepsilon}(0)|^2)dx+
 \int_{\mathbb{R}^3}(\Phi(\rho^{\varepsilon,b}(0))-\Phi(\rho^{\varepsilon}(0)))dx\\
 &-\int_{B_\mu}\Phi'(\rho_\mu)(\rho^{\varepsilon,b}(0)-\rho^{\varepsilon}(0))dx+{1\over4\pi}\int_{\mathbb{R}^3}|\nabla V_{\text{in}}^{\varepsilon,b}(0)-\nabla V_{\text{in}}^{\varepsilon}(0)|^2dx\\
 &+\int_{B_\mu^c}(V_\mu-V_\mu(R_\mu))(\rho^{\varepsilon,b}(0)-\rho^{\varepsilon}(0))dx\to0
 \end{align*}
 as $b\to\infty$. Then
 \begin{align}\label{d-varepsilon-b-0-to-d-varepsilon-0}
 d_{\varepsilon,b}(0)\to  d_{\varepsilon}(0)
 \end{align}
 as $b\to\infty$, and
  for $b>0$ large enough,
 \begin{align*}
 &d_{\varepsilon,b}(0)+|{M}-M_{\mu}|^q\\
 \leq&
 {1\over2}\int_{\mathbb{R}^3}(\rho^{\varepsilon,b}(0)|v^{\varepsilon,b}(0)|^2-\rho^{\varepsilon}(0)|v^{\varepsilon}(0)|^2)dx+
 \int_{\mathbb{R}^3}(\Phi(\rho^{\varepsilon,b}(0))-\Phi(\rho^{\varepsilon}(0)))dx\\
 &-\int_{B_\mu}\Phi'(\rho_\mu)(\rho^{\varepsilon,b}(0)-\rho^{\varepsilon}(0))dx+{1\over4\pi}\int_{\mathbb{R}^3}|\nabla V_{\text{in}}^{\varepsilon,b}(0)-\nabla V_{\text{in}}^{\varepsilon}(0)|^2dx\\
 &+\int_{B_\mu^c}(V_\mu-V_\mu(R_\mu))(\rho^{\varepsilon,b}(0)-\rho^{\varepsilon}(0))dx+\sum_{i=1,2,3,5}d_{i,\varepsilon}(0)+2d_{4,\varepsilon}(0)+|{M}-M_{\mu}|^q<\delta_1,
 \end{align*}
where $d_{i,\varepsilon}(0)=d_{i,\varepsilon}((\rho^{\varepsilon}(0),v^{\varepsilon}(0)),(\rho_\mu,0))$ for $i=1,\cdots, 5$. By \eqref{symmetric-p-estimate}, we have
\begin{align}\label{symmetric-p-estimate-app}
d_{\varepsilon,b}(t)\leq C_1 (d_{\varepsilon,b}(0)+|M-M_{\mu}|^q), \quad t>0,
\end{align}
for $b>0$ large enough. By Lemma 6.1 in \cite{CHWW2023}, we have  $\rho^{\varepsilon,b}(t)\to\rho^\varepsilon(t)$ in $L^{\hat{q}}(D_R)$ for any $\hat{q}\geq1$, $t>0$ and $R>0$, where $D_R=\{x|R^{-1}\leq |x|\leq R\}$. By Lemma 5.1 in \cite{CHWW2023},
$\sqrt{\rho^{\varepsilon,b}(t)}v^{\varepsilon,b}(t)\rightharpoonup\sqrt{\rho^{\varepsilon}(t)}v^{\varepsilon}(t)$ in $L^2(D_R)$ for $t>0$.
By Lemma 5.1 in \cite{CHWW2023} again, a similar argument  to \eqref{initial-potential-in-approximate-cnsp-b} gives
$
{1\over4\pi}\int_{\mathbb{R}^3}|\nabla V_{\text{in}}^{\varepsilon,b}(t)|^2dx\leq C \|\rho^{\varepsilon,b}(t)\|_{L^{\gamma_1}(B_{\mu})}^2<C$,
which implies
$ V_{\text{in}}^{\varepsilon,b}(t)\rightharpoonup V_{\text{in}}^{\varepsilon}(t)$ in $\dot{H}^1(\mathbb{R}^3)$ for $t>0$. Since $\Phi$ is convex,
it follows from \cite{Morrey1966} that
\begin{align*}
&\frac{1}{2}\int_{D_R}\rho^{\varepsilon}(t)|v^{\varepsilon}(t)|^2dx +\int_{D_R}\Phi(\rho^{\varepsilon}(t))dx+
\int_{B_\mu}(-\Phi(\rho_\mu)+\Phi'(\rho_\mu)\rho_\mu) dx\\
&+\frac{1}{8\pi}\int_{\mathbb{R}^3}|\nabla V_{\text{in}}^{\varepsilon}(t)-\nabla V_\mu|^2dx+\int_{D_R} (V_\mu-V_\mu(R_{\mu}))\rho^{\varepsilon}(t) dx\\
\leq&\liminf_{b\to\infty}\bigg(\frac{1}{2}\int_{D_R}\rho^{\varepsilon,b}(t)|v^{\varepsilon,b}(t)|^2dx +\int_{D_R}\Phi(\rho^{\varepsilon,b}(t))dx+\frac{1}{8\pi}\int_{\mathbb{R}^3}|\nabla V_{\text{in}}^{\varepsilon,b}(t)-\nabla V_\mu|^2dx\\
&+\int_{D_R} (V_\mu-V_\mu(R_{\mu}))\rho^{\varepsilon,b}(t) dx\bigg)+
\int_{B_\mu}(-\Phi(\rho_\mu)+\Phi'(\rho_\mu)\rho_\mu) dx\\
\leq&\liminf_{b\to\infty}d_{\varepsilon,b}(t)\leq C_1 (d_{\varepsilon}(0)+|M-M_{\mu}|^q)
\end{align*}
for  $t>0$ and $R>0$, where we used \eqref{d-varepsilon-b-0-to-d-varepsilon-0}-\eqref{symmetric-p-estimate-app} in the last inequality above. Letting $R\to\infty$, we have
\begin{align}\label{symmetric-p-varepsilon-estimate-app}
d_{\varepsilon}(t)\leq C_1 (d_{\varepsilon}(0)+|M-M_{\mu}|^q),\quad t>0.
\end{align}

Step $5$.  Prove the nonlinear stability for the global weak solutions $(\rho, v)$  of the  CEP systems \eqref{EP}-\eqref{inaprx}.
Recall that  $d(t)=d((\rho(t),v(t)),(\rho_\mu,0))$, $t\geq0$ and $\int_{\mathbb{R}^3}\rho(0) dx={M}$.
Let
$d(0)+|{M}-M_{\mu}|^q<{1\over9}\delta_1$. By \eqref{rho-convergence22},
$
\rho^{\varepsilon}(0)\to\rho(0)$ in $ L_{loc}^{1}\cap L_{loc}^{\gamma_1}(\mathbb{R}^3)
$
as $\varepsilon\to 0$.
Similar to \eqref{initial-potential-in-approximate-cnsp-b}, we have
$
 V_{\text{in}}^{\varepsilon}(0)\to V_{\text{in}}(0)
 $ in $\dot{H}^1(\mathbb{R}^3)$
 as $\varepsilon\to0$.
This, along with \eqref{first two part of energy}, implies
that $ d_{\varepsilon}(0)\to  d(0)$ as $\varepsilon\to0$ and
$
 d_{\varepsilon}(0)+|{M}-M_{\mu}|^q<{1\over3}\delta_1
 $
for $\varepsilon>0$ small enough. By \eqref{symmetric-p-varepsilon-estimate-app}, we have
$
d_{\varepsilon}(t)\leq C_1 (d_{\varepsilon}(0)+|M-M_{\mu}|^q)$
for $\varepsilon>0$ small enough and  $t\geq0$.
By Lemma 6.3 and (6.6) in \cite{CHWW2023}, we have  $\rho^{\varepsilon}(t)\rightharpoonup\rho(t)$ in $L^1(\mathbb{R}^3)$, $\sqrt{\rho^{\varepsilon}(t)}v^{\varepsilon}(t)\rightharpoonup\sqrt{\rho(t)}v(t)$ in $L^2(\mathbb{R}^3)$ and $\|\Phi(\rho^{\varepsilon})(t)\|_{L^{1}(B_{\mu})}<C$  for $t>0$. This implies that  $ V_{\text{in}}^{\varepsilon}(t)\rightharpoonup V_{\text{in}}(t)$ in $\dot{H}^1(\mathbb{R}^3)$  for $t>0$.
It follows from \cite{Morrey1966} that
\begin{align*}
&\frac{1}{2}\int_{\mathbb{R}^3}\rho(t)|v(t)|^2dx +\int_{B_R}\Phi(\rho(t))dx+
\int_{B_\mu}(-\Phi(\rho_\mu)+\Phi'(\rho_\mu)\rho_\mu) dx\\
&+\frac{1}{8\pi}\int_{\mathbb{R}^3}|\nabla V_{\text{in}}(t)-\nabla V_\mu|^2dx+\int_{\mathbb{R}^3} (V_\mu-V_\mu(R_{\mu}))\rho(t) dx\\
\leq&\liminf_{\varepsilon\to0}\bigg(\frac{1}{2}\int_{\mathbb{R}^3}\rho^{\varepsilon}(t)|v^{\varepsilon}(t)|^2dx +\int_{B_R}\Phi(\rho^{\varepsilon}(t))dx+\frac{1}{8\pi}\int_{\mathbb{R}^3}|\nabla V_{\text{in}}^{\varepsilon}(t)-\nabla V_\mu|^2dx\\
&+\int_{\mathbb{R}^3} (V_\mu-V_\mu(R_{\mu}))\rho^{\varepsilon}(t) dx\bigg)+
\int_{B_\mu}(-\Phi(\rho_\mu)+\Phi'(\rho_\mu)\rho_\mu) dx\\
\leq&\liminf_{\varepsilon\to0}d_{\varepsilon}(t)\leq C_1 (d(0)+|M-M_{\mu}|^q)
\end{align*}
for  $t>0$, where $B_R=\{x| |x|\leq R\}$ and $R>0$. Sending $R\to\infty$, we have
$
d(t)\leq C_1 (d(0)+|M-M_{\mu}|^q)$
for $t>0$.
\end{proof}

\section*{Acknowledgement}
Z. Lin is
 partially supported by the NSF grants DMS-1715201 and DMS-2007457.  H. Zhu is  partially supported by National Key R $\&$ D Program of China under Grant 2021YFA1002400, NSF
of China under Grant 12101306 and NSF of Jiangsu Province, China under Grant BK20210169.

\end{CJK*}

\end{document}